 \setlist[itemize]{wide = 0pt, labelwidth = 2em, labelsep*=0em, itemindent = 0pt, leftmargin = \dimexpr\labelwidth + \labelsep\relax, noitemsep,topsep = 1ex,}
  \setlist[enumerate]{wide = 0pt, labelwidth = 2em, labelsep*=0em, itemindent = 0pt, leftmargin = \dimexpr\labelwidth + \labelsep\relax, noitemsep,topsep = 1ex}
\theoremstyle{plain}
\newtheorem{thmx}{Theorem} 
\renewcommand{\thethmx}{\Alph{thmx}} 
\newtheorem{thm}{Theorem}[section]  
\newtheorem{lem}[thm]{Lemma}
\newtheorem{claim}[thm]{Claim}
\newtheorem{proposition}[thm]{Proposition}
\theoremstyle{definition}
\newtheorem{dfn}[thm]{Definition}
\theoremstyle{remark}
\newtheorem{rem}[thm]{Remark} 
\newtheorem{example}[thm]{Example}
\numberwithin{equation}{section}  
\theoremstyle{plain}
\newlist{thmlist}{enumerate}{1}
\setlist[thmlist]{wide = 0pt, labelwidth = 2em, labelsep*=0em, itemindent = 0pt, leftmargin = \dimexpr\labelwidth + \labelsep\relax, noitemsep,topsep = 1ex, font=\normalfont, label=(\roman*), ref=\thethm.(\roman{thmlisti})}
\newlist{thmenum}{enumerate}{1} 
\setlist[thmenum]{wide = 0pt, labelwidth = 2em, labelsep*=0em, itemindent = 0pt, leftmargin = \dimexpr\labelwidth + \labelsep\relax, noitemsep,topsep = 1ex, font=\normalfont, label=(\roman*), ref=\thethmx.(\roman{thmenumi})}
\newlist{corlist}{enumerate}{1} 
\setlist[corlist]{wide = 0pt, labelwidth = 2em, labelsep*=0em, itemindent = 0pt, leftmargin = \dimexpr\labelwidth + \labelsep\relax, noitemsep,topsep = 1ex, font=\normalfont, label=(\roman*), ref=\thecorx.(\roman{corlisti})}
\newenvironment{(i)}
{{\it Proof of (i).}}
{\hfill $\Box$ \\}
 \newenvironment{(ii)}
{{\it Proof of (ii).}}
{\hfill $\Box$ \\}
\newenvironment{(iii)}
{{\it Proof of (iii).}}
{\hfill $\Box$ \\}
\crefname{lem}{Lemma}{Lemmas} 
\crefname{conjecture}{Conjecture}{Conjectures}
\crefname{thm}{Theorem}{Theorems}
\crefname{proposition}{Proposition}{Propositions}
\crefname{dfn}{Definition}{Definitions}
\crefname{rem}{Remark}{Remarks}
\crefname{cor}{Corollary}{Corollaries}
\crefname{corx}{Corollary}{Corollaries}
\crefname{problem}{Problem}{Problems}
\crefname{thmx}{Theorem}{Theorems}
\crefname{claim}{Claim}{Claims}
\crefname{assumption}{Assumption}{Assumptions}
\crefname{main}{Main Theorem}{Main Theorems}
\def\ep{\varepsilon}
\def\Res{{\rm Res}}
\def\codim{{\rm codim}}
\def\rank{{\rm rank}\,}
\newcommand{\cS}{\mathcal{S}}
\DeclareMathOperator{\GL}{GL}
\newcommand*{\rom}[1]{\expandafter\@slowromancap\romannumeral #1@}
\newcommand{\sD}{\mathscr{D}}
\newcommand{\sO}{\mathscr{O}}
\newcommand{\sR}{\mathscr{R}}
\newcommand{\cC}{\mathcal C}
\newcommand{\cR}{\mathcal R}
\newcommand{\bC}{\mathbb{C}}
\newcommand{\bD}{\mathbb{D}}
\newcommand{\bF}{\mathbb{F}}
\newcommand{\bK}{\mathbb{K}}
\newcommand{\bN}{\mathbb{N}}
\newcommand{\bP}{\mathbb{P}}
\newcommand{\bQ}{\mathbb{Q}}
\newcommand{\bR}{\mathbb{R}}
\newcommand{\bU}{\mathbb{U}}
\newcommand{\bZ}{\mathbb{Z}}
\newcommand{\kp}{\mathfrak{p}}
\def\db{\overline{\partial}}
  \def\spec{\textrm{Spec}\,}
 \def\d{\partial}
\def\End{{\rm \small  End}}
\def\vol{\mathrm{vol}}
\newcommand{\diae}{{}^\diamond\! E}
\newcommand{\Sym}{{\rm Sym}}
\begin{document} 
 	\title[Pluriharmonic maps into Euclidean buildings and symmetric differentials]{Pluriharmonic maps into Euclidean buildings\\ and symmetric differentials} 
 \alttitle{Applications pluri-harmoniques à valeurs dans un immeuble euclidien et formes différentielles symmétriques}
 \dedicatory{In memory of Jean-Pierre Demailly}
	\author[D. Brotbek]{Damian Brotbek}  
\address{Institut \'Elie Cartan de Lorraine, Universit\'e de Lorraine, F-54000 Nancy,
	France.}
\email[Damian Brotbek]{damian.brotbek@univ-lorraine.fr}
\author[G. Daskalopoulos]{Georgios Daskalopoulos}
\address{Department of Mathematics, Brown Univeristy, Providence, RI}
\email[Georgios Daskalopoulos]{daskal@math.brown.edu}
\urladdr{https://www.math.brown.edu/gdaskalo/}

\author[Y. Deng]{Ya Deng}   
\address{CNRS, Institut \'Elie Cartan de Lorraine, Universit\'e de Lorraine, F-54000 Nancy,
	France.}
\email[Ya Deng]{ya.deng@math.cnrs.fr} 
\urladdr{https://ydeng.perso.math.cnrs.fr} 

\author[C. Mese]{Chikako Mese}
\address{Johns Hopkins University, Department of Mathematics, Baltimore, MD}
\email[Chikako Mese]{cmese@math.jhu.edu} 
\urladdr{https://sites.google.com/view/chikaswebpage/home}

\keywords{Logarithmic symmetric differentials, (pluri-)harmonic map, Bruhat-Tits  buildings,  rigid representation,  Simpson's integrality conjecture, variation of Hodge structure}
\subjclass{53C43, 	14F35}
\begin{abstract}  
Given a  complex smooth quasi-projective variety $X$, a semisimple algebraic group $G$ defined over some non-archimedean local field $K$ and  a Zariski dense   representation   $\varrho:\pi_1(X)\to G(K)$, we construct  a $\varrho$-equivariant (pluri-)harmonic map from the universal cover  of $X$ into the Bruhat-Tits building $\Delta(G)$ of $G$, with some suitable asymptotic behavior. This theorem  generalizes the previous work by Gromov-Schoen to the quasi-projective setting.    

As an application, we   prove that   $X$ has nonzero global logarithmic symmetric differentials if there exists a linear representation $\pi_1(X)\to {\rm GL}_N(\mathbb{K})$ with infinite image, where $ \mathbb{K}$ is any field.   This theorem generalizes the previous work by Brunebarbe, Klingler and Totaro to  the quasi-projective setting.  
\end{abstract}   
	
	\maketitle
\tableofcontents	
	\section{Introduction}\label{sec:introduction}
  \subsection{Main theorem} 
  Let $X$ be a complex smooth quasi-projective variety, and let $G$ be a semisimple algebraic group defined over a field $K$. In this paper, we mainly focus   on representations $\varrho:\pi_1(X) \to G(K)$, where $K$ can be the field of complex numbers, a number field, or a non-archimedean local field. We refer to such a representation $\varrho$ as \emph{Zariski dense} if the Zariski closure of its image is $G$.
  
  In the archimedean setting, i.e., when $K$ is the field of complex numbers, Donaldson, Corlette, and Labourie established the existence of $\varrho$-equivariant harmonic maps to symmetric spaces when $X$ is a compact K\"ahler manifold (cf. \cite{Don87,Cor88,Lab91}). Mochizuki extended this result to the quasi-projective case, proving the existence of $\varrho$-equivariant pluriharmonic maps in \cite{Moc07b}.
  
  In the non-archimedean setting, i.e., when $K$ is a non-archimedean local field, Gromov and Schoen proved  the existence of $\varrho$-equivariant pluriharmonic maps to the Bruhat-Tits building of $G$ when $X$ is a compact K\"ahler manifold  (cf. \cite{GS92}). However, extending their result to quasi-projective varieties has remained a significant open problem for the past three decades. A series of works by the second and fourth authors \cite{DM21,DMunique,DMks,DMrs} have made  progress in extending the Gromov-Schoen theory to the quasi-projective setting.
  
  The main goal  of this paper is to complete the generalization of Gromov-Schoen's theorem to the quasi-projective setting. 
 Our main theorem is as follows.
    \begin{thmx}[=\cref{thm:harmonicmaps,logestimate,thm:functorial}] \label{GS}
    		Let $X$ be a complex smooth quasi-projective variety, and let $G$ be a semisimple algebraic group defined over a non-archimedean local field $K$. Denote by $\widetilde{X}$ the universal cover of $X$. If $\varrho: \pi_1(X) \to G(K)$ is a Zariski-dense representation, then there exists a $\varrho$-equivariant,  pluriharmonic map $\tilde{u}: \widetilde{X} \to \Delta(G)$ to the Bruhat-Tits building $\Delta(G)$ of $G$, such that the following properties hold:
    		\begin{thmenum}
    			\item \label{main:existence}  the map $\tilde{u}$ is locally Lipschitz, and has logarithmic energy growth (cf. \cref{def:log energy}). 
    			\item \label{main:harmonic}
    		the map $\tilde{u}$ is  harmonic  with respect to any K\"ahler metric on $\widetilde{X}$. 
    			\item \label{main:energy}Let $\overline{X}$ be a smooth projective compactification of $X$, such that $\Sigma := \overline{X} \backslash X$ is a simple normal crossing divisor. For any \emph{smooth} point $x$ of $\Sigma$, if the local monodromy of $\varrho$ around the irreducible component of $\Sigma$ containing $x$ is quasi-unipotent, then there exists an open neighborhood $\Omega_x$ of $x$ in $\overline{X}$ such that the energy $E^{\tilde{u}}[\Omega_x \backslash\Sigma]$ of $\tilde{u}$ on $\Omega_x \backslash \Sigma$ is finite (cf. \eqref{eq:defenergy} and \eqref{eq:defenergy2} for the definition of energy). 
    			\item \label{main:functorial} Let $f: Y \to X$ be a morphism from a smooth quasi-projective variety $Y$. Denote by $\tilde{f}: \widetilde{Y} \to \widetilde{X}$ the lift of $f$ between the universal covers of $Y$ and $X$.  Then the $f^*\varrho$-equivariant map $\tilde{u }\circ \tilde{f}: \widetilde{Y} \to \Delta(G)$ is   pluriharmonic and has logarithmic energy growth. 
    		\end{thmenum} 
    	\end{thmx}
    	\subsection{An application}
Esnault asked   whether a smooth projective variety with an infinite fundamental group has non-trivial symmetric differentials. This was confirmed by Brunebarbe, Klingler, and Totaro \cite[Theorem 0.1]{BKT13} in the linear case, when $X$ is a compact Kähler manifold.  
    	\begin{thm}[\cite{BKT13}]\label{thm:BKT}
    		Let $X$ be a compact K\"ahler manifold. If there is a linear representation $\varrho:\pi_1(X)\to \GL_{N}(\bK)$ with $\bK$ being any field such that $\varrho(\pi_1(X))$ is infinite, then $H^0(X,{\rm Sym}^k\Omega_X)\neq 0$ for some positive integer $k$. 
    	\end{thm} 
Building on  ideas from previous works \cite{Kat97,Zuo96,Eys04,Kli13,BKT13} and using \cref{GS}, we extend \cref{thm:BKT} to the quasi-projective setting. 
\begin{thmx}\label{main}
	Let $X$ be a smooth quasi-projective variety, and let $\tau:\pi_1(X)\to\GL_N(\bK)$  be a linear representation where $\bK$ is  any field.  Let $\overline{X}$ be a  smooth projective compactification  of $X$ such that $\Sigma:=\overline{X}\backslash X$ is a simple normal crossing divisor. If the image of $\tau$ is an infinite group, then   there is a positive integer $k$ such that   $H^0(\overline{X},{\rm Sym}^k\Omega_{\overline{X}}(\log \Sigma))\neq 0$.   
\end{thmx}

Let us mention that \cref{GS} has further applications in other areas. For more recent developments, we refer readers to \cite{CDY22,DY23,DY24,DM24}.

\subsection{Notation and Convention}\label{sec:notation}
\begin{enumerate}[label=(\arabic*)]
	\item Unless otherwise specified,  algebraic varieties are assumed to be connected and   defined over the field of complex 
	numbers.
	\item   A \emph{log smooth pair} $(\overline{X},\Sigma)$ consists of a   smooth projective variety $\overline{X}$ and a simple normal crossing divisor $\Sigma$ on $\overline{X}$.  We  denote by $X:=\overline{X}\backslash \Sigma$, and    $\pi_X:\widetilde{X}\to X$ the universal cover map. 
	\item Let $\overline{X}$ be a smooth projective variety. A line bundle $L$ on $\overline{X}$ is \emph{sufficiently
		 ample} if there exists a projective embedding $\iota:\overline{X}\hookrightarrow \bP^N$  such that $L=\iota^*\sO_{\bP^N}(d)$ for some $d\geqslant 3$.
	\item A linear representation $\varrho:\pi_1(X)\to \GL_N(K)$ with $K$ some field is called \emph{reductive} if the Zariski closure of $\varrho(\pi_1(X))$ is a reductive algebraic group over $\overline{K}$.
	
	  If
	  $Y$ is a closed smooth subvariety of $X$, we denote by $\varrho_Y:\pi_1(Y)\to G(K)$    the composition of the natural homomorphism $\pi_1(Y)\to \pi_1(X)$ and $\varrho$.  
	\item Denote by $\bD$ the unit disk in $\bC$, and by $\bD^*$ the punctured unit disk. We write $\bD_r := \{ z \in \bC \mid |z| < r \}$, $\bD^*_r := \{ z \in \bC \mid 0 < |z| < r \}$, and $\bD_{r_1,r_2} := \{ z \in \bC \mid r_1 < |z| < r_2 \}$. 
\end{enumerate}

\subsection*{Acknowledgment} 
 The paper is long as   it incorporates  the section on harmonic map theory into Euclidean buildings in the (unpublished) work of the second and fourth authors \cite{DM21}.  We would like to thank Michel Brion, H\'el\`ene Esnault, Auguste H\'ebert,  Nicolas Monod, Guy Rousseau for useful discussions and comments.   
Damian Brotbek is supported by the grant \emph{Lorraine Université d'Excellence - Future Leader}.  Georgios Daskalopoulos is  supported in part by NSF DMS-2105226. Ya Deng  is  supported by the ANR grant  \emph{Karmapolis} (ANR-21-CE40-0010).    Chikako Mese is  supported in part by NSF DMS-2005406  and DMS-2304697. 

\section{Preliminaries}
\subsection{NPC spaces and Euclidean buildings}
For the definitions in this subsection, we refer the readers to \cite{bridson-haefliger,Rou09,KP23}. 
 \begin{dfn}[Geodesic  space] Let $(X,d_X)$ be a metric space.  A curve $\gamma:[0, \ell] \rightarrow X$ into $X$ is called a geodesic if the length $d_X(\gamma(a),\gamma(b))=b-a$ for any subinterval $[a, b] \subset[0,\ell]$.  A metric space $(X,d_X)$ is a \emph{geodesic space} if there exists a geodesic connecting every pair of points in $X$.
\end{dfn}
\begin{dfn}[NPC space]An NPC (non-positively curved) space $(X,d_X)$ is a complete geodesic space that satisfies the following condition: for any three points $P,Q,R\in X$ and a  geodesic $\gamma:[0, \ell] \rightarrow X$ with $\gamma(0)=Q$ and $\gamma(\ell)=R$, we have
$$
d^{2} (P, Q_{t} ) \leq(1-t) d^{2}(P, Q)+t d^{2}(P, R)-t(1-t) d^{2}(Q, R)
$$
for any $t\in [0,1]$, where $Q_{t}:=\gamma(t\ell)$.
\end{dfn}
A smooth Riemannian manifold with nonpositive sectional curvature  is  an NPC  space. Among these, the Bruhat-Tits building $\Delta(G)$ associated with a semisimple algebraic group  $G$ defined over a non-archimedean local field  $K$ is noteworthy an example  of NPC spaces.  We will not provide the lengthy definition of Bruhat-Tits buildings here, but interested readers can find precise definitions in references such as \cite{Rou09} and \cite{KP23}.  It is noteworthy that $G(K)$ acts isometrically on the building $\Delta(G)$, and transitively on its set of apartments.  Here, $G(K)$ denotes the group of $K$-points of $G$. The dimension of $\Delta(G)$ is equal to the $K$-rank of the algebraic group $G$, which is the dimension of a maximal $K$-split torus in $G$. 


 \subsection{Harmonic maps to NPC spaces}\label{sec:harmonic}
  Consider a map $f: \Omega\to Z$ from an  $n$-dimensional Riemannian manifold $(\Omega, g)$ to an NPC space $(Z, d_Z)$.  When the target space $Z$ is a smooth  Riemannian manifold of nonpositive sectional curvature,  the energy of a smooth map $f: \Omega \rightarrow Z$ is
 $$
 E^{f}=\int_{\Omega}|d f|^{2} {\rm dvol}_g
 $$
 where $(\Omega, g)$ is a Riemannian domain and ${\rm dvol}_g$ is the volume form of $\Omega$.   We say $f: \Omega\to Z$ is harmonic if it is locally energy minimizing; i.e. for any $x \in \Omega$, there exists $r>0$ such that the restriction $\left.u\right|_{B_{r}(x)}$ minimizes energy amongst all maps $v: B_{r}(x) \rightarrow  {Z}$ with the same boundary values as $\left.u\right|_{B_{r}(x)}$. Here $B_{r}(x)$ denotes the geodesic ball of radius $r$ centered at $x$.
 
 In this paper, we mainly consider the target $Z$ to be  NPC spaces, not necessarily smooth. Let us recall the definition of  harmonic maps in this context  (cf. \cite{KS} for more details).
 
 Let $(\Omega, g)$ be a bounded Lipschitz Riemannian domain. Let $\Omega_{\ep}$ be the set of points in $\Omega$ at a distance least $\ep$ from $\partial \Omega$.  Denote by $S_{\ep}(x):=\partial B_{\ep}(x)$. We say $f: \Omega \rightarrow Z$ is an $L^{2}$-map (or that $f \in L^{2}(\Omega, Z)$ ) if for some point $P\in \Omega$, we have  
 $$
 \int_{\Omega} d^{2}(f(x), P) d \mathrm{vol}_{g}<\infty.
 $$
 For $f \in L^{2}(\Omega, Z)$, define
 $$
 e^f_{\ep}: \Omega \rightarrow \mathbb{R}, \quad e^f_{\ep}(x)= \begin{cases}\int_{y \in S_{\ep}(x)} \frac{d^{2}(f(x), f(y))}{\ep^{2}} \frac{d \sigma_{x, \ep}}{\ep^{n-1}} & x \in \Omega_{\ep} \\ 0 & \text { otherwise }\end{cases}
 $$
 where $\sigma_{x, \ep}$ is the induced measure on $S_{\ep}(x)$. We define a family of functionals
 $$
 E_{\ep}^{f}: C_{c}(\Omega) \rightarrow \mathbb{R}, \quad E_{\ep}^{f}(\varphi)=\int_{\Omega} \varphi e^f_{\ep} d\vol_{g} .
 $$
 We say $f$ has finite energy, denoted by   $f \in W^{1,2}(\Omega, Z)$,  if
 $$
 E^{f}[\Omega]:=\sup _{\varphi \in C_{c}(\Omega), 0 \leq \varphi \leq 1} \limsup _{\ep \rightarrow 0} E_{\ep}^{f}(\varphi)<\infty .
 $$ 
In this case, it was proven in \cite[Theorem 1.10]{KS} that   there exists an absolutely continuous function $e^f(x)$ with respect to   Lebesgue measure,   which we call the \emph{energy density}, such that $e^f_{\ep}(x) d \mathrm{vol}_{g} $ converges weakly to $ e^f(x) {\rm dvol}_g$ as $\ep$ tends to $0$. In analogy to the case of smooth targets, we write $|\nabla f|^{2}(x)$ in place of $e^f(x)$. Hence $|\nabla f|^{2}(x)\in L^1_{\rm loc}(\Omega)$. In particular, the (Korevaar-Schoen) energy of $f$ in $\Omega$ is
\begin{align}\label{eq:defenergy}
	E^{f}[\Omega]=\int_{\Omega}|\nabla f|^{2} {\rm dvol}_g . 
\end{align}

\begin{dfn}[Harmonic maps]
	 We say a continuous map $f: \Omega \rightarrow Z$ from a Lipschitz domain $\Omega$ is harmonic if it is locally energy minimizing; more precisely, at each $p \in \Omega$, there exists  
 an open  neighborhood $\Omega_p$ of $p$ such that all  comparison maps which agree with $u$ outside of this neighborhood have no less energy. 
\end{dfn} 
 
For  $V \in \Gamma \Omega$ where $\Gamma \Omega$ is the set of
Lipschitz vector fields  on $\Omega$,  in \cite[\S 2.3]{KS}, the \emph{directional energy} $|f_*(V)|^2$  is similarly
defined.  The real valued $L_{\rm loc}^1$ function $|f_*(V)|^2$ generalizes
the norm squared on the directional derivative of $f$.  The
generalization of the pull-back metric is the continuous, symmetric, bilinear, non-negative and tensorial operator
\[
\pi_f(V,W)=\Gamma \Omega \times \Gamma \Omega \rightarrow
L^1(\Omega, \bR)
\]
where
\[
\pi_f(V,W)=\frac{1}{2}|f_*(V+W)|^2-\frac{1}{2}|f_*(V-W)|^2.
\]
  We refer to  \cite[\S 2.3]{KS} for more
details.

Let  $(x_1, \dots, x_n)$ be local coordinates of $(\Omega,g)$, and $g=(g_{ij})$, $g^{-1}=(g^{ij})$ be the local metric expressions.  Then energy density function of
$f$ can be written (cf.~\cite[(2.3vi)]{KS})
\[
|\nabla f|^2 =  g^{ij} \pi_f(\frac{\partial}{\partial x_i}, \frac{\partial}{\partial x_j})
\]
 Next assume  $(\Omega, g)$ is a Hermitian domain and let $(z_1=x_1+ix_2, \dots, z_n=x_{2n-1}+ix_{2n})$ be local complex coordinates.   If we extend $\pi_f$ linearly  over $\bC$, then we have
 \[
 \frac{1}{4}|\nabla f|^2= g^{i\bar j} \pi_f ( \frac{\partial f}{\partial z_i},
 \frac{\partial f}{\partial \bar z_j}).
 \]

\begin{dfn}[Locally Lipschitz]
	A continuous map $f:\Omega\to Z$ is called \emph{ locally Lipschitz} if for any $p\in \Omega$, there exists   an open  neighborhood $\Omega_p$ of $p$   and a constant $C>0$ such that  $d(f(x), f(y))\leq Cd(x,y)$ for any $x,y\in \Omega_p$. 
\end{dfn}
\begin{rem}\label{rem:Litschitz}
It follows from the  definition of $|\nabla f|^2$ that if $f$ is locally Lipschitz, then for any $p\in \Omega$,  there exists    an open  neighborhood $\Omega_p$ of $p$   and a constant $C>0$  such that  over $\Omega_p$ one has
	 $
	 |\nabla f|^2\leq C.
	 $   
\end{rem}

 \subsection{Admissible coordinates}\label{sec:metric}
 The following definition of \emph{admissible coordinates} introduced in \cite{Moc06} will be used throughout the paper. 
 	\begin{dfn}(Admissible coordinates)\label{def:admissible}	Let $\overline{X}$ be a complex manifold and let $\Sigma$ be a simple normal crossing divisor in $\overline{X}$. Let $x$ be a point of $\Sigma$, and assume that $\{\Sigma_{j}\}_{ j=1,\ldots,\ell}$ 
 	are components of $\Sigma$ containing $x$. An \emph{admissible coordinate neighborhood} of $x$
 	is the tuple $(U ;z_1,\ldots,z_n;\varphi)$ (or simply  $(U ;z_1,\ldots,z_n)$ if no confusion arises) where
 	\begin{enumerate}[label=(\alph*)]
 		\item $U $ is an open subset of $\overline{X}$ containing $x$.
 		\item There is a holomorphic isomorphism   $\varphi:U \to \mathbb D^n$ such that  $\varphi(\Sigma_j)=(z_j=0)$ for any
 		$j=1,\ldots,\ell$.
 	\end{enumerate}  
 \end{dfn} 
We define a \emph{Poincar\'e-type} metric $\omega_P$ on $(\mathbb D^*)^\ell\times \mathbb D^{n-\ell}$  by
 \begin{align}\label{eq: Poin}
 \omega_P=\sum_{j=1}^{\ell}\frac{\sqrt{-1}dz_j\wedge d\overline{z}_j}{|z_j|^2(\log |z_j|^2)^2}+\sum_{k=\ell+1}^{n} \sqrt{-1}dz_k\wedge d\overline{z}_k. 
 \end{align}
We note that, using the notation from the definition, one can construct a global complete metric \( g \) on \( X \) of Poincaré-type at every point of \( \Sigma \), provided that \( \overline{X} \) is a compact Kähler manifold. 

We briefly recall the construction. Fix any Kähler metric \( \overline{\omega} \) on \( \overline{X} \). Write \( \Sigma = \sum_{j=1}^{k} \Sigma_j \) as a sum of irreducible components. For each \( j = 1, \dots, k \), choose a smooth Hermitian metric \( |\cdot|_j \) on \( \sO_{\overline{X}}(\Sigma_j) \) and take a section \( \sigma_j \in H^0(\overline{X}, \sO_{\overline{X}}(\Sigma_j)) \) such that \( \Sigma_j = (\sigma_j = 0) \) and \( |\sigma_j|_j < 1 \) over \( \overline{X} \). Then, it suffices to set, for some \( C \in \mathbb{R}_{>0} \) large enough,
\begin{equation} \label{griffithsmetric}
	g := C \overline{\omega} + \sum_{j=1}^{k} \frac{d|\sigma_j|_j \wedge d^c |\sigma_j|_j}{|\sigma_j|_j^2 (\log |\sigma_j|_j^2)^2}.
\end{equation}
This metric is said to be of \emph{Poincaré-type} around \( \Sigma \), meaning that for any \( x \in \Sigma \) and for any admissible coordinates centered at \( x \), there exist constants \( C_1, C_2 > 0 \) such that 
\[
C_1 \omega_P \leq g \leq C_2 \omega_P.
\]

	\section{Existence of Harmonic maps to Bruhat-Tits buildings}
   In this section, we prove  the existence assertion of equivariant pluriharmonic map in  \cref{GS}, together with a weaker version of  \cref{main:existence}, and  \cref{main:harmonic}. Several technical steps are deferred to the appendix. 

\begin{thm}[Existence of (pluri-)harmonic maps] \label{thm:harmonicmaps}	
Let $(\overline{X},\Sigma)$ be a log smooth pair, $G$ be a semisimple   algebraic group defined over a non-archimedean local field $K$, and $\Delta(G)$ be the Bruhat-Tits building of $G$.   Let $L$ be a sufficiently ample line bundle   on $\overline{X}$. Let $\varrho: \pi_1(X)\to G(K)$ be a Zariski dense representation. Then there exists a $\varrho$-equivariant pluriharmonic map $\tilde{u}:\widetilde{X}\to \Delta(G)$, that is  locally Lipschitz, and has logarithmic energy growth with respect to $(\overline{X},L)$ (cf. \cref{logenenergy}). Moreover, $\tilde{u}$  is harmonic with respect to \emph{any} K\"ahler metric  of $\widetilde{X}$.  
\end{thm}

\subsection{Preliminary lemmas}
Throughout the rest of this section,    let $G$ be a semisimple algebraic group defined over a non-archimedean local field $K$, and $\Delta(G)$ be the Bruhat-Tits building of $G$.  We denote by $d(\bullet,\bullet)$ the distance function of $\Delta(G)$.  We fix a Zariski dense representation $\varrho:\pi_1(X) \rightarrow G(K)$ as in Theorem~\ref{thm:harmonicmaps}.
Below, we summarize some  results regarding the action of $\varrho$.

\begin{lem} \label{lem:ptinfty}
If $\varrho:\pi_1(X) \rightarrow G(K)$ is  Zariski dense, then the following holds: \begin{enumerate}[label=\rm (\roman*)]
	\item \label{A1}The action of $H:=\varrho(\pi_1(X))$ on $\Delta(G)$ is without fixed points at infinity.
	\item  \label{A2}$\Delta(G)$ contains a non-empty closed
	minimal convex $H$-invariant subset $\mathcal{C}$.
\end{enumerate}
Here,  $\mathcal{C}$ is minimal means that there does not exist a non-empty closed convex strict subset of $\mathcal{C}$ invariant under $H$.  
\end{lem}
We refer the readers to \cite[Chapter II.8]{bridson-haefliger} for the definition of boundary at infinity of CAT(0) spaces. Roughly speaking, it is the set of equivalent classes of geodesic rays. 
\begin{proof}
If $H$ fixes a point at infinity, then $H$ is contained  $P(K)$ where $P$ is a proper parabolic subgroup of  $G$. This contradicts the fact that $H$ is Zariski dense and proves  \Cref{A1}.
\Cref{A2} follows from  \cite[Theorem 4.3, (A.ii)]{caprace-monod}.  We can argue as follows: suppose $\Delta(G)$ has no minimal closed convex $H$-invariant set.
Then it contains a decreasing sequence $X_n$ of closed convex $H$-invariant
sets whose intersection is empty.
Choose now a   base point $x$ in $\Delta(G)$ and consider the projection $x_n$ of $x$ to
$X_n$. Namely, $x_n$ is the unique point in $X_n$ such that $d(x,x_n)=\inf _{y\in X_n}d(x,y)$.  Such map exists by \cite[Proposition 2.4.(1)]{bridson-haefliger}.  This sequence is unbounded, otherwise the intersection was not
empty. Since the space is locally compact, it converges to some point at infinity. 
This point at infinity is fixed by any $h$ in $H$ because the distance
$d(h.x_n, x_n)$
is bounded by  $d(h.x, x)$   by \cref{lem:proj} below.  This proves \Cref{A2}. 
\end{proof}

\begin{lem} \label{lem:proj}
 There exists a unique  \emph{closest point projection} map  $\Pi:\Delta(G)\to \cC$, i.e., for any $x\in \Delta(G)$, there exists a unique   $\Pi(x)\in \cC$ such that $d(x,\Pi(x))=\inf _{y\in \cC}d(x,y)$.    Such projection map $\Pi: \Delta(G) \rightarrow \mathcal{C}$ is   distance decreasing, and $H$-equivariant; i.e.~$\Pi(g x)=g\Pi(x)$ for any $g\in H$ and any $x\in \Delta(G)$. 
\end{lem}

\begin{proof} 
The existence assertion for such projection map $\Pi$ follows from  \cite[Proposition 2.4.(1)]{bridson-haefliger}.  For $g \in H$ and any $y\in \cC$, we have
\[
d(g\Pi(x),gx)=d(\Pi(x), x)\leq d(g^{-1}y,x)  = d(y, gx).
\]
This  implies $\Pi(gx)=g \Pi(x)$.  By \cite[Proposition 2.4.(4)]{bridson-haefliger}, $\Pi$ is distance decreasing.   This proves \Cref{A2}.
\end{proof}

\begin{rem}\label{example:subtree}
	The proof of \cref{thm:harmonicmaps} proceeds by induction on the dimension of the domain $X$. To carry out this induction, we must first establish the uniqueness of the pluriharmonic map at each dimension. However, it is currently unknown whether an equivariant pluriharmonic map into $\Delta(G)$ is unique. To address this issue, we construct an equivariant pluriharmonic map into a closed minimal convex set $\mathcal{C}$ of Lemma~\ref{lem:ptinfty} and show that it is the unique equivariant pluriharmonic map into $\mathcal{C}$. This step is necessary due to the existence of examples of algebraic subgroups $H$ of a semisimple algebraic group $G$ with a proper, non-empty, closed minimal convex $H$-invariant subset of $\Delta(G)$ (cf. \Cref{exmaple:sub} below).
\end{rem}

\begin{example}\label{exmaple:sub}
	Let  $K$ be   a non-archimedean local field and let  $L$ be a finite extension of $K$.  Assume that   $G$ is an algebraic group defined over $K$ and split over $L$. Then $G(K)$ is Zariski dense and unbounded in $G(L)$, and the Bruhat-Tits building $\Delta(G,K)$ is a proper, closed, unbounded $G(K)$-invariant subset embedded in $\Delta(G,L)$. As an example, if $G={\rm SL}_2$, $K=\mathbb{Q}_2$, and $L=\mathbb{Q}_2(\sqrt2)$, then $\Delta(G,L)$ is a tree and $\Delta(G,K)$ is a closed subtree. This illustrates the importance of considering the existence of proper, non-empty, closed minimal convex $H$-invariant subsets in $\Delta(G)$.
\end{example}

%

As a closed convex subset of an NPC space, $\mathcal{C}$  is itself is a NPC space.  Since $\cC$ is $\varrho(\pi_1(X))$-invariant, we can define
\begin{equation} \label{hatrho}
\hat{\varrho}: \pi_1(X) \rightarrow  \mathsf{Isom}(\mathcal{C}) 
\end{equation}
by setting $\hat \varrho(\gamma)$ to be the restriction of $\varrho(\gamma)$ to $\mathcal{C}$. Here $\mathsf{Isom}(\mathcal{C}) $ denotes the isometry group of $\cC$. To lighten the notation, we abusively write $\varrho$ for $\hat{\varrho}$.

\begin{lem}   \label{lem:semisimple}
$\varrho(\pi_1(X)) \subset \mathsf{Isom}(\mathcal{C})$ consists of only semisimple elements, i.e., for any $g\in \varrho(\pi_1(X))$, there exists $P_0 \in \cC$ such that $\inf_{P \in \cC} d(P, gP)=d(P_0, gP_0)$. 
\end{lem}

\begin{proof}
	Since $G$ is semisimple,  $\Delta(G)$ is a Euclidean building without a Euclidean factor. 
Let $\hat g \in \varrho(\pi_1(X))$ such that $\hat g= g|_{\mathcal{C}}$ for some $g \in G(K)$.
By \cite[Theorem 4.1]{parreau} and the assumption that $\Delta(G)$ does not have a Euclidean factor, $g$ is either elliptic or hyperbolic.  That is, there exists $P_0 \in \Delta(G)$ such that $\min_{P \in \Delta(G)} d(P, gP)=d(P_0, gP_0)$.  By Lemma~\ref{lem:proj}, $\Pi$ is distance decreasing and $\varrho(\pi_1(X))$-invariant. It yields
\begin{align}\label{def:trans}
\inf_{P \in \Delta(G)} d(P, gP) & =    d(P_0, gP_0) \, \geqslant \, d(\Pi(P_0), \Pi(gP_0)) \, = \, d(\Pi(P_0), {g}\Pi(P_0)) \\
& \geqslant  \inf_{P \in \mathcal{C}} d(P, {g}P) \, \geqslant \, \inf_{P \in \Delta(G)} d(P, gP). \nonumber
\end{align}
In particular, 
\[
d(\Pi(P_0), \hat g\Pi(P_0)) = d(\Pi(P_0),g\Pi(P_0))= \inf_{P \in \mathcal{C}} d(P,gP)=\inf_{P \in \mathcal{C}} d(P,\hat gP). 
\]
Hence $\hat g$ is a semisimple isometry of $\mathcal{C}$.
\end{proof}
\begin{dfn}[Translation length]\label{def:translation}
For any $\gamma\in \pi_1(X)$,  the \emph{translation length} of $\varrho(\gamma)$ is 
\begin{align}\label{eq:translation}
	L_\gamma:=\inf_{P\in \Delta(G)}d(P,\varrho(\gamma)P)\stackrel{\eqref{def:trans}}{=}\inf_{P\in \cC}d(P,\varrho(\gamma)P).
\end{align}
\end{dfn}


\subsection{Equivariant maps and sections} \label{sec:ems}
Endow $X$ with a K\"ahler metric $g$.  Let $\mathcal{C}$ be as in Lemma~\ref{lem:ptinfty} and $\varrho :\pi_1(X) \rightarrow \mathsf{Isom}(\mathcal C)$ be as in (\ref{hatrho}).
The set of all $\varrho $-equivariant maps  into $\mathcal{C}$ are in one-to-one correspondence with the set of all sections of the fiber bundle
$
\Pi: \widetilde{X} \times_{\varrho } \mathcal{C}  \rightarrow X
$.  More precisely, for a $\varrho $-equivariant map $\tilde{f}: \widetilde{X} \rightarrow \mathcal{C}$,  we define a  section of $\Pi$ by  setting
$
  f(\pi_X(\tilde p)) = [(\tilde p, \tilde f(\tilde p))],
$ 
where $\tilde{p}$ is any point in $\widetilde{X}$.
Since the energy density function  $|\nabla \tilde f|^2$ on $\widetilde{X}$ is a $\pi_1(X)$-invariant function, it descends to a function on $X$, denoted by 
$
|\nabla f|^2$. We also define  the energy of $f$ in any open subset $U$ of $X$ by setting
\begin{align}\label{eq:defenergy2}
	 E^f[U] = \int_U |\nabla f|^2 d\mbox{vol}_g. 
\end{align}

\subsection{Pullback bundles}\label{sec:pullback}
Let $f:Y\to X$ be a morphism between smooth quasi-projective varieties. Let $\widehat{Y}$ be a connected component of $\widetilde{X}\times_XY$.  
Then we have the following commuting diagram:
\begin{equation*}
\begin{tikzcd}\widetilde Y \arrow[d, 
  "\pi_{\widehat Y}"]\arrow[dd, bend right=30, "\pi_Y"']
    &
    \\
\widehat Y \arrow[r, "\hat{f}"] \arrow[d, 
  "\hat \pi_Y"]
    & \widetilde X \arrow[d, "\pi_X" ] \\
  Y  \arrow[r, "f" ]
& X  \end{tikzcd}
\end{equation*}
It induces a fiber bundle $\hat{\Pi}_Y:\widehat{Y}\times_{f^*\varrho } \mathcal C\to Y$, 
%
such that one has the following commuting diagram:
\begin{equation*}
\begin{tikzcd}
 \widehat Y \times_{f^*\varrho } \mathcal C \arrow[r, "F"] \arrow[d, 
  "\hat \Pi_Y"]
    & \widetilde X \times_{\varrho } \mathcal C \arrow[d, "\Pi_X" ] \\
  Y  \arrow[r, "f" ]
& X. \end{tikzcd}
\end{equation*}
Note that, given any section  $u: X \rightarrow  \widetilde X \times_\varrho \mathcal C$ of $\Pi_X$,   the composition
\[
u \circ f:Y \rightarrow \widetilde X \times_\varrho \mathcal C
\]
 defines a section  
 of the fiber bundle $\widehat Y \times_{f^*\varrho} \mathcal C \simeq f^*(\widetilde X \times_\varrho \mathcal C)\to Y$, which in turn defines a  $f^*\varrho$-equivariant map $\hat u_f: \hat Y \rightarrow \mathcal C$.  Define $\widetilde{u_f}:=\hat{u}_f\circ \pi_{\widehat{Y}}$, which is an  $f^*\varrho$-equivariant map $\widetilde Y \rightarrow \mathcal C$. It  defines a section
 \begin{equation*} 
 u_f: Y \rightarrow \widetilde Y \times_{f^*\varrho} \mathcal C.
 \end{equation*}
In this paper, we will mainly  focus on the special case where  $Y$ is a closed smooth subvariety of $X$ and  
 $
 \iota: Y  \rightarrow X
 $ 
 is the inclusion map. In this cases,  we will use the notation
\begin{align}\label{restrictionsection}
	 u_Y: Y \rightarrow \widetilde Y \times_{\varrho_Y} \mathcal C.
\end{align}
in place of $u_\iota$, where $\varrho_Y:\pi_1(Y)\to {\rm Isom}(\cC)$ denotes  the composition of $\iota_*:\pi_1(Y)\to \pi_1(X)$ and $\varrho$.   
On the other hand, for any section $
u: Y \rightarrow  \widetilde Y \times_{f^* \varrho} \mathcal C$ 
of the fiber bundle $\widetilde Y \times_{f^* \varrho} \mathcal C \rightarrow Y$, the composition of $u$ with the natural map $\widetilde Y \times_{f^* \varrho} \mathcal C\to  \widetilde{X}\times_\varrho\cC$ 
 is a map $Y \rightarrow \widetilde X \times_\rho \mathcal C$.
 For notational simplicity, we will abusively denote this map as
\begin{equation} \label{descendingmap}
u:Y \rightarrow  \widetilde X \times_{\varrho } \mathcal C.
 \end{equation}  

\subsection{Regularity results of Gromov-Schoen}
Let $X$ be a hermitian manifold and let  $\tilde{u}:\widetilde{X}\to \Delta(G)$ be a $\varrho$-equivariant harmonic map.  Following  \Cref{sec:pullback},  let  $u:X  \rightarrow \widetilde{X} \times_{\varrho } \Delta(G)$ be the section corresponding to $\tilde{u}$.  We recall some results in \cite{GS92}. 
\begin{thm}[\cite{GS92}, Theorem 2.4]\label{thm:Lcon}
A harmonic map $\tilde{u}:\widetilde X \rightarrow \Delta(G)$ is locally Lipschitz continuous.\qed 
\end{thm}
\begin{dfn}[Regular points and singular points] \label{def:sing}
A point $x \in \widetilde{X}$ is said to be a {\it regular point} of $\tilde{u}$  if there exists a neighborhood ${\mathcal N}$ of $x$ and an apartment $A \subset \Delta(G)$ such that $\tilde{u}(\mathcal N) \subset A$.  
A {\it singular point} of $\tilde{u}$ is a point in $\widetilde{X}$ that is not a regular point.  Since $\tilde{u}$ is $\varrho$-equivariant and $G(K)$ acts transitively on the apartments of $\Delta(G)$,   it follows that if $x \in \widetilde{X}$ is a regular point (resp.~singular point) of $\tilde{u}$, then every point of $\pi_X^{-1}(\pi_X(x))$ is a  regular point (resp.~singular point) of $\tilde{u}$.
  We denote by $\mathcal R(\tilde{u})$ (resp.~$\mathcal S(\tilde{u})$) the set of all regular points (resp.~singular points) of $\tilde{u}$ and let $\mathcal R(u)=\pi_X(\mathcal R(\tilde{u}))$ (resp.~$\mathcal S(u)=\pi_X(\mathcal S(\tilde{u}))$). 
\end{dfn}

\begin{lem}[ \cite{GS92}, Theorem 6.4] \label{gs}
The set ${\mathcal S}(u)$  is  a closed subset of $X$ of  Hausdorff codimension at least two.
  For any compact subdomain $\Omega_1$ of $X$, there is a sequence 
of Lipschitz functions $\{\psi_i\}$ with $\psi_i \equiv 0$ in a neighborhood of ${\mathcal S}(\tilde{u}) \cap \bar \Omega_1$, $0 \leq \psi_i \leq 1 $ and $\psi_i(x) \rightarrow 1$ for all $x \in \Omega_1 \backslash {\mathcal S}(u)$ such that
\[
\lim_{i \rightarrow \infty} \int_{\Omega_1} |\nabla u|^2 |\nabla \psi_i| \omega^n =0
\]
and 
\[
\lim_{i \rightarrow \infty}\int_{\Omega_1} |\nabla \nabla u| \, | \nabla \psi_i| \omega^n =0.   
\] \qed
\end{lem} 

\subsection{A Bertini-type theorem}

In this subsection, we will prove  a Bertini-type theorem that plays a crucial role in proving the pluriharmonicity of $\tilde{u}$ in \cref{thm:harmonicmaps}.
\begin{proposition}\label{prop:Bertini}
	Let \((\overline{X},\Sigma)\) be a log smooth pair with \(n:=\dim X\geqslant 2\).  Let \(L\) be a very ample line bundle on \(\overline{X}\) and fix an integer \(k\geqslant 3\). Set \(T= |L^k|^{\times (n-1)}\). Consider the universal complete intersection \[\overline{\mathscr{R}}=\left\{(x,H_1,\cdots H_{n-1})\in \overline{X}\times T\mid  x\in H_1\cap \cdots \cap H_{n-1}\right\}\subset \overline{X}\times T,\]
	and let \(\mathscr{R}:=\overline{\mathscr{R}}\cap( X\times T)\) be the restriction of the universal family to \(X\times T\). Denote by \(\overline{\pi}:\overline{\mathscr{R}}\to  T\) and \(\pi :\mathscr{R}\to T\), the canonical projections induced by the  second projection   $\overline X \times T\to T$. Let \(T^\circ\subset T\) be the Zariski open subset such that, 
for every  \((H_1,\dots, H_{n-1})\in T^{\circ}\), the hypersurfaces \(H_1,\dots, H_{n-1}\)  are smooth, and the divisor \(H_1+\dots+ H_{n-1}+\Sigma\)  is simple normal crossing. 
Let us denote by \(\pi^{\circ}:\mathscr{R}^\circ=\pi^{-1}(T^\circ)\to T^\circ\) be the restricted family. Then:  
	\begin{thmlist} 
		\item The open subset \(T^{\circ}\) is non-empty.
		\item\label{item:tangent}   For any point $x\in X$ and $v\in T_{x}X$, there exists some  \((H_1,\dots, H_{n-1})\in T^{\circ}\) such that $x\in H_1\cap\ldots\cap H_{n-1}$ and $H_1\cap\ldots\cap H_{n-1}$ is tangent to $v$. 
		\item The family \(\pi^{\circ}:\mathscr{R}^{\circ}\to T^{\circ}\) is locally topologically trivial.
	\end{thmlist}
\end{proposition}
The proof of \cref{prop:Bertini} relies on the following   Bertini-type result.
\begin{lem}\label{lem:Bertini general}
	Let \(N\geq 3\) be a positive integer.  Let \(Y \subset \mathbb{P}^N\) be a smooth projective subvariety of dimension \(m\geq 1\). Fix an integer \(d \geqslant 3\). Let \(x\in \mathbb{P}^N\) and $v\in T_{\bP^N,x}$. Let $P_{x,v}\subset |\sO_{\bP^N}(d)|$ be the general  hypersurfaces in $\bP^N$ of degree \(d\) which pass  through \(x\) and are tangent to $v$. If $\dim Y\geqslant 2$, or $x\not\in Y$,  then $P_{x,v}$ is non-empty and 
	\begin{thmlist}
		\item  a general element of  $P_{x,v}$ is smooth;
		\item  a general element of  $P_{x,v}$  intersects with  \(Y\) transversely;
		\item the base locus of $P_{x,v}$ is $\{x\}$.  
	\end{thmlist}
\end{lem}
\begin{proof}
	Consider the incidence variety
	\[I=\left\{(y,H)\in Y\times  P_{x,v} \mid   y\in H \ \mbox{ and }\ T_yY\subset T_yH\right\}.\]
Then $I$ parametrizes the set of points \((y,H)\) such that \(H\) intersects \(Y\) non-transversally at \(y\). We first prove that  \(p_2(I)\neq  P_{x,v}\) where \(p_2:(y,H)\mapsto H\) is the second projection. We shall do this by a classical dimension count.

	Fix \(y\in Y\) and denote by \[I_y=p_1^{-1}(\{y\})\cong \{H\in  P_{x,v}\mid  (y,H)\in I\}\subset P_{x,v} \] where \(p_1:(y,H)\mapsto y\) is the first projection.   
	Consider the 1-jet map
\begin{align}\label{eq:J1}
	{\rm J}^1_x:H^0(\mathbb{P}^N,\mathscr{O}_{\mathbb{P}^N}(d))\to   \mathscr{O}_{\mathbb{P}^N}(d)\otimes \sO_{\bP^N,x}/\mathfrak{m}^2_{\mathbb{P}^N,x}
\end{align}
	which is surjective as $d\geq 3$. Note that $(x,v)\in T_{X,x}$ defines a linear map  $$L_v:\sO_{\bP^N,x}/\mathfrak{m}^2_{\mathbb{P}^N,x}\to \bC^2 $$
	given by $L_v(f)=(f(x), df(v))$. Let $V_{x,v}:=\ker (L_v\circ 	{\rm J}^1_x)$. For any $H\in |V_{x,v}|$, we have $x\in H$ and $H$ is tangent to $v$.  Hence $|V_{x,v}|=P_{x,v}$.    Note that $\dim V_{x,v}=\dim H^0(\mathbb{P}^N,\mathscr{O}_{\mathbb{P}^N}(d))-2$.  
	Consider the  map 
	\[	{\rm J}^1_{Y,y}:H^0(\mathbb{P}^N,\mathscr{O}_{\mathbb{P}^N}(d))\to    \mathscr{O}_{\mathbb{P}^N}(d)|_{Y}\otimes \sO_{Y,y}/\mathfrak{m}^2_{Y,y}\cong \mathbb{C}^{m+1}\]
	which is surjective as $d\geq 3$. 
	Then $|\ker {\rm J}^1_{Y,y}\cap V_{x,v}|=I_y$.  
	\begin{claim}\label{claim:cod}
		We have \[\codim_{P_{x,v}}I_y\begin{cases}
			=m+1 \quad \mbox{ if }x\neq y\\
		\geq 	m-1 \quad \mbox{ if } x=y.
		\end{cases}\]
	\end{claim}
	\begin{proof}
We may suppose that \(x=[1:0:\cdots:0]\).  An element \(H\in |\mathscr{O}_{\mathbb{P}^N}(d)|\) is given by a homogenous polynomial of degree \(d\), \[F=\sum_{\substack{i_0,\dots, i_N\\ i_0+\cdots +i_N=d}}a_{i_0,\dots, i_N}X_0^{i_0}\cdots X_N^{i_N}.\]
		 	Consider the inhomogeneous coordinate $(z_1,\ldots,z_N):=(\frac{X_1}{X_0},\ldots,\frac{X_N}{X_0})$. Then   $F$ be can expressed as \begin{align}\label{eq:inhomo}
		 		 f_0:=\sum_{\substack{i_0,\dots, i_N\\ i_0+\cdots +i_N=d}}a_{i_0,\dots, i_N}z_1^{i_1}\cdots z_N^{i_n}.
		 	\end{align} 
		We write $v:=\sum_{i=1}^{N}b_i\frac{\d}{\d z_i}|_{x}$.  The condition \(H\in |V_{x,v}|\) is equivalent to 
		\begin{align} \label{eq:V_x}
			a_{d,0,\dots, 0}=0, \quad
		b_1a_{d-1,1,0\ldots,0}+b_2a_{d-1,0,1,0\ldots,0}+\ldots+b_Na_{d-1,0,\ldots,1}=0.
		\end{align}
		
		\medspace
		
\noindent		{\bf Case 1:   \(y\neq x\). }	   We may suppose that   \(y=[0:1:0:\cdots:0]\). 
		On the open set \((X_1\neq 0)\subset \mathbb{P}^N\) we choose the coordinate $(z_0,z_2,\ldots,z_N):=(\frac{X_0}{X_1},\frac{X_2}{X_1}\ldots,\frac{X_N}{X_1})$. One deshomogenizes \(F\) to the polynomial 
\begin{align*} 
	 f=a_{1,d-1,0,\dots, 0}z_0+a_{0,d,0,\dots, 0}+ a_{0,d-1,1,0,\dots,0}z_2+\cdots+ a_{0,d-1,0,\dots,0,1}z_N+o(z).  
\end{align*}
		Therefore, the map $${\rm J}^1_y: H^0(\mathbb{P}^N,\mathscr{O}_{\mathbb{P}^N}(d))\to    \mathscr{O}_{\mathbb{P}^N}(d) \otimes \sO_{\bP^N,y}/\mathfrak{m}^2_{\bP^N,y}$$
		 is just given by \[{\rm J}^1_y(f)=a_{1,d-1,0,\dots, 0}z_0+a_{0,d,0,\dots, 0}+ a_{0,d-1,1,0,\dots,0}z_2+\cdots+ a_{0,d-1,0,\dots,0,1}z_N.\]
	 Since $d\geq 3$, it follows from 
\eqref{eq:V_x}  that $${\rm J}^1_y|_{V_{x,v}}:V_{x,v}\to \mathscr{O}_{\mathbb{P}^N}(d)\otimes \sO_{\bP^N,y}/\mathfrak{m}^2_{\mathbb{P}^N,y}$$
		is surjective.  Therefore, 
		\[	{\rm J}^1_{Y,y}|_{V_{x,v}}:V_{x,v}\to   \mathscr{O}_{\mathbb{P}^N}(d)|_{Y}\otimes \sO_{Y,y}/\mathfrak{m}^2_{Y,y}\cong \mathbb{C}^{m+1}\]
		is also surjective. This implies that 
 \[\codim_{P_{x,v}}I_y=\rank (	{\rm J}^1_{Y,y}|_{V_{x,v}})=m+1.\]
		
		\medspace
		
\noindent		{\bf  Case 2: $y=x$}.   
In the inhomogeneous coordinates  $(z_1,\ldots,z_N)$   introduced earlier,  the map \({\rm J}^1_{x}\) defined in \eqref{eq:J1} can be expressed as
		\begin{align}\label{eq:evaluation}
		{\rm J}^1_{x}(f_0)=a_{d,0,\ldots,0}+a_{d-1,1,0,\dots, 0}z_1 +\cdots+ a_{d-1,0,\dots,0,1}z_N, 
		\end{align}
		where $f_0$ is defined in \eqref{eq:inhomo}.
Then the rank of 	$${\rm J}^1_x|_{V_{x,v}}:V_{x,v}\to \mathscr{O}_{\mathbb{P}^N}(d)\otimes \sO_{\bP^N,x}/\mathfrak{m}^2_{\mathbb{P}^N,x}$$
is  $N-1$. It follows that  $\rank {\rm J}^1_{Y,x}|_{V_{x,v}}\geq m-1$.  Therefore,   \[\codim_{P_{x,v}}I_y=\rank (	{\rm J}^1_{Y,y}|_{V_{x,v}})\geq m-1.\]
	\end{proof}
By \Cref{claim:cod},   for any \(y\in Y\backslash\{x\}\), one has 
	\[\dim I_y\leqslant \dim P_{x,v}- m-1\quad \text{and} \quad \dim I_x\leqslant \dim P_{x,v}- m+1.\]
This implies that,  when $x\not\in Y$,  one has 
	\[\dim I= m+\dim P_{x,v}- m-1 <\dim P_{x,v}.\]
	When $x \in Y$ and $\dim Y\geqslant 2$,  one has  
	\[\dim I= \max\{m+\dim P_{x,v}- m-1,\dim P_{x,v}- m+1\}<\dim P_{x,v}.\]
In conclusion, $p_2(I)\subsetneqq P_{x,v}$. Note that for any $H\in P_{x,v}\backslash p_2(I)$, $H$ contains $x$, $H$ is tangent to $v$, and   it intersects with $Y$ transversely. 
	
	Now we want to show that a general element in $P_{x,v}$ is smooth. We first note that the base locus of $P_{x,v}$ is $\{x\}$.      By the Bertini Theorem, a general element of $P_{x,v}$  is smooth away from $x$. We just need to show that a general element of $P_{x,v}$  is smooth at $x$.   If $F\in H^0(\bP^N, \sO_{\bP^N}(d))$ is not smooth at $x$, then ${\rm J}_x^1(f_0)=0$, where $f_0(z_1,\ldots,z_N)$ is the inhomogeneous polynomial of $F$ defined in \eqref{eq:inhomo}. Therefore,  if we denote by $V\subset P_{x,v}$ the set of hypersurfaces which are singular at $x$, we have \[\codim_{P_{x,v}}V=N-1.\]
	This proves that a general element of $H$ in $P_{x,v}$ is smooth.  
\end{proof}

We can now turn to the proof of the proposition.
\begin{proof}[Proof of \cref{prop:Bertini}]
We embed \(\overline{X}\) into some \(\mathbb{P}^N\) using the very ample line bundle \(L\).   	The fact that \(T^{\circ}\) is non-empty is a direct consequence of \cref{lem:Bertini general}.

	
	Let us prove \cref{item:tangent}.  Write $\Sigma:=\sum_{i=1}^{m}\Sigma_i$. For any $I=\{i_1,\ldots,i_k\}\subset \{1,\ldots,m\}$, we denote by $\Sigma_I:=\Sigma_{i_1}\cap\ldots\cap \Sigma_{i_k}$.  Fix any $d\geqslant 3$, and consider $P_{x,v}\subset |\sO_{\bP^N}(d)|$  as above. Note that $x\not\in \Sigma_I$ for any $I\subset \{1,\ldots,m\}$.  According to \cref{lem:Bertini general},   a general hypersurface $H_1$ in $ P_{x,v}$ is smooth,   which intersects $\overline{X}$  transversely, and is also transverse to each $\Sigma_I$  with $\dim \Sigma_I\geqslant 1$. Therefore, $H_1\cap \Sigma$ is a simple normal crossing divisor of the smooth projective variety $H_1$, and $v\in T_{H_1,x}$. We now apply   \cref{lem:Bertini general} for the log smooth pair $(H_1\cap \overline{X},H_1\cap \Sigma)$ inductively to find smooth hypersurfaces $H_2,\ldots,H_{n-1}\in |\sO(d)|$ satisfying the conditions in \cref{item:tangent}.

	Let us now come to the last part of the statement. Let us consider \(\overline{\mathscr{R}}^{\circ}=\overline{\pi}^{-1}(T^\circ)\) and   denote by \(\overline{\pi}^{\circ}:\overline{\mathscr{R}}^{\circ}\to T^{\circ}\) the induced morphism. This is a smooth proper family of curves, and therefore each fiber has the same genus which we shall denote by \(g\). Moreover, since every fiber intersects with \(\Sigma\) transversally, this intersection  consists of exactly \(M:=(dL)^{(n-1)}\cdot \Sigma\) distinct points. In particular, the map \(\overline{\pi}^\circ|_{\Sigma\cap \overline{\mathscr{R}}^{\circ}}:\Sigma\cap \overline{\mathscr{R}}^{\circ}\to T^{\circ}\) is étale. From there one deduces that for any small enough (euclidean) open subset \(U\subset T^{\circ}\), there exists a homeomorphism \(\varphi:\overline{\pi}^{-1}(U)\to U\times C\), such that \(\varphi(\Sigma\cap \overline{\pi}^{-1}(U))=\{q_1,\dots, q_M\}\) where \(C\) is a fixed curve of genus \(g\)  with \(M\) distinct marked points. This implies in particular that \(\mathscr{R}|_U\cong U\times (C\backslash\{q_1,\dots, q_M)\) is topologically trivial. 
\end{proof}

\subsection{Logarithmic energy growth (I)} \label{subsec:logenergy}
Let $(\overline{X},\Sigma)$ be a log smooth pair. Let $L$ be a \emph{sufficiently ample} line bundle on $\overline{X}$. For a harmonic map on $X$, we introduce the notion of logarithmic energy growth with respect to $(\overline{X}, L)$. 

We first recall a Lefschetz hyperplane theorem for smooth quasi-projective varieties in \cite[Theorem 1.9]{Eyr04}.
\begin{thm}\label{thm:Lefschetz} 
Let $(\overline{X},\Sigma)$ be a log smooth pair. If $L$ is a very ample line bundle on $\overline{X}$, then for \emph{any} smooth hypersurface $H\in |L|$ such that $H+ \Sigma$ is  simple normal crossing  (the choice of such a hypersurface is generic by the Bertini theorem),  the natural homomorphism $\pi_1(H\backslash \Sigma)\to \pi_1(\overline{X}\backslash \Sigma)$ is surjective.      \qed
\end{thm}
For any element $s \in H^0(\overline{X},L)$, we  set 
$\overline{Y}_{\!\! s} := s^{-1}(0)$, $Y_s: =\overline{Y}_{\!\! s} \backslash \Sigma$,  and denote by $\iota_{Y_s}:Y_s \rightarrow X$  the inclusion map.
Let \begin{align}\label{eq:U}
	 {\mathbb U}= \{ s \in  H^0(\overline{X},L)\mid  \overline{Y}_{\!\! s} \  \mbox{ is smooth} \ \mbox{and} \ \overline{Y}_{\!\! s} + \Sigma \  \mbox{ is a  normal crossing divisor} \}. 
\end{align}  
For $q \in X$, consider  the subspace
\begin{align}\label{eq:Uq}
V(q) =\{s \in H^0(\overline{X}, L)\mid s(q)=0\} 
\mbox{ and } 
{\mathbb{U}}(q) = \bU \cap V(q).
\end{align}
According to \cref{lem:Bertini general}, the sets $\mathbb U$ and $\mathbb U(q)$  are Zariski dense open subsets of $H^0(\overline{X},L)$ and  $V(q)$ respectively.

According to \Cref{thm:Lefschetz}, it follows that $\varrho(\pi_1(Y_s)) = \varrho(\pi_1(X))$. This equality implies that if $\varrho(\pi_1(X))$ does not fix  a point at infinity of $\cC$, then $\varrho_{Y_s}$ also does not fix a point at infinity of $\cC$.  

In \cite{DMrs},  the second and fourth authors introduced the definition of \emph{logarithmic energy growth} for harmonic maps from quasi-projective curves to ${\rm CAT}(0)$-spaces. We can now extend this definition to any smooth quasi-projective variety.

Let	 $T:=|L|^{\times (n-1)}$.  Consider the universal complete intersection \[\overline{\mathscr{R}}=\left\{(x,H_1,\cdots H_{n-1})\in \overline{X}\times T\mid  x\in H_1\cap \cdots \cap H_{n-1}\right\}\subset \overline{X}\times T.\]
Let $T^\circ$ be the Zariski open subset of $T$ defined in \cref{prop:Bertini}.  We set $\sR^\circ:=(X\times T^\circ)\cap \overline{\mathscr{R}}$ and let 
 us denote by \(\pi^{\circ}:\mathscr{R}^\circ \to T^\circ\) the projection map. Then    by applying \Cref{thm:Lefschetz} inductively,  for each fiber $\cR$ of $\pi^\circ$,    the homomorphism  $\pi_1(\cR)\to \pi_1(X)$ is surjective.
\begin{dfn}[Logarithmic energy growth (I)] \label{logenenergy} 
	Let $\varrho:\pi_1(X)\to G(K)$ be a Zariski dense representation where $G$ is a semi-simple algebraic group defined over a non-archimedean local field $K$. Assume that $\tilde{u}:\widetilde{X}\to \Delta(G)$ is a   $\varrho$-equivariant harmonic map. 
If $\dim_{\bC} X=1$, we say    $\tilde{u}$ has \emph{logarithmic energy growth}
if there is a positive constant $C$ such that  for any $0<r<1$, one has 
 \begin{equation} \label{Cbddef}
\frac{L_\gamma^2}{2\pi} \log \frac{1}{r} \leq E^{{u}}[\mathbb D_{r,1}] \leq \frac{L_\gamma^2}{2\pi} \log \frac{1}{r}+C,
\end{equation}
where $\mathbb D$ is a conformal disk in $\overline{X}$ centered at $p\in \Sigma$.   The constant
 $L_\gamma$ is   the \emph{translation length} of $\varrho(\gamma)$ defined in \cref{def:translation}, where   $\gamma \in \pi_1(X)$ is the element corresponding to the loop $\gamma$ around $p$.

 If $\dim_{\bC}X\geqslant 2$,   a $\varrho$-equivariant harmonic map $\tilde{u}:\widetilde{X} \rightarrow \Delta(G)$ has  \emph{logarithmic energy growth   with respect to $(\overline{X},L)$},  if for any fiber  $\cR$ of $\pi^\circ:\sR^\circ\to T^\circ$,    the   section $u_{\cR}:\cR\to \cR\times_{\varrho_\cR}\Delta(G)$    has  logarithmic energy growth.  Here $\varrho_\cR:\pi_1(\cR)\to \Delta(G)$ and $u_\cR$ are defined in \eqref{restrictionsection}.
\end{dfn}
\begin{rem}
	Note that when $\dim X\geqslant 2$, the definition of logarithmic energy growth in \cref{logenenergy} depends \emph{a priori} on the choice of a projective compactification $\overline{X}$ of $X$ and a sufficiently ample line bundle $L$ on $\overline{X}$. In  \cref{thm:unicity}, we will prove that for the harmonic map constructed in \Cref{thm:harmonicmaps}, it has logarithmic energy growth with respect to any projective compactification $\overline{X}$ and any sufficiently ample line bundle $L$. Consequently, we can give  a more intrinsic definition of logarithmic energy growth in \cref{def:log energy} that surpasses \cref{logenenergy}. 
\end{rem}

\begin{example} 
To clarify Definition~\ref{logenenergy}, we   give an example of a harmonic map that \emph{does not} have logarithmic energy growth in the sense of Definition~\ref{logenenergy}.  For a non-archimedean local field $K$, the building of $\GL_1(K)$ is a real line $\mathbb R$.   The action of $\GL_1(K)$ on $\mathbb R$ is translation by $\nu(k)$ where $\nu: K^* \rightarrow \mathbb R$ is the valuation of $K$. 
Let  $X=\mathbb C^*$ and $\varrho:\pi_1(X) \rightarrow \GL_1(K)$ be the trivial representation, i.e.~$\varrho(\gamma)$ is the identity map for any $\gamma \in \pi_1(\mathbb C^*)$.  Consider the universal cover
\begin{align*}
	\pi:\bC&\to \bC^*\\
	w&\mapsto \exp(w).
\end{align*}
Define a map
\begin{align*}
 \tilde{u}:  \bC &\rightarrow \mathbb R\\
	  w &\mapsto \frac{1}{2}\int_{0}^w (\exp^*(d \log z+d\log \bar z)) = {\rm Re}(w).
\end{align*} 
Then $\tilde{u}$     is a $\varrho$-equivariant pluriharmonic function. It descends to a function $u:\bC^*\to\bR$ defined by $u(w):=  \log |w|$.
  
Endow $\bD^*$ with the standard Euclidean metric $\sqrt{-1}\frac{dz\wedge d\bar{z}}{2}$. However, note that the energy is independent of the choice of metric on the Riemann surface.  We can easily compute the energy of $u$ in the annulus $\bD_{r,1}:=\{r <|z| < 1\} \subset \mathbb C^*$:  
\[
E^u[\bD_{r,1}] =  \int_{\log {r}}^{\log 1} dt \cdot  \int_0^{2\pi} d\theta  =2\pi \log \frac{1}{r}.
\]
Although the energy of $u$ grows logarithmically as $r \rightarrow 0$, the $\varrho$-equivariant harmonic function $\tilde{u}$ {\it does not} have logarithmic energy growth in the sense of Definition~\ref{logenenergy}.  Indeed, the definition of logarithmic energy growth depends on the translation length $L_\gamma$ of $\varrho(\gamma)$ where $\gamma \in \pi_1(\mathbb C^*)$ corresponds to the loop around the puncture.  Since $\varrho$ is the trivial representation,   the translation length is $L_\gamma=0$ and the $\varrho$-equivariant harmonic function of  logarithmic energy growth is  identically equal to a constant.
 \end{example}

\subsection{Existence of harmonic maps from Riemann surfaces}

We state the existence and uniqueness of  equivariant harmonic maps from Riemann surfaces of logarithmic energy growth.
\begin{lem}\label{lem:rs}
Let $Y=\overline{Y} \backslash \{p_1, \dots, p_n\}$ where $\overline{Y}$ is a compact Riemann surface and  let $G$ be a semisimple linear algebraic group defined over    a non-archimedean local field $K$.  Assume that  $\varrho_Y:\pi_1(Y) \rightarrow G(K)$  is a Zariski dense representation.   Let $\mathcal{C} \subset \Delta(G)$ be a non-empty closed minimal $\varrho_Y(\pi_1(Y))$-invariant convex subset as in Lemma~\ref{lem:ptinfty}.    
Then there exists a  \emph{unique} $\varrho_Y$-equivariant harmonic map $\tilde{u}: \widetilde Y \rightarrow \mathcal{C}$  with logarithmic energy growth.  
\end{lem}

\begin{rem} \label{rmk:unique}
The existence statement in Lemma~\ref{lem:rs} directly follows from \cite[Theorem 1.1]{DMrs}. On the other hand, the uniqueness theorem of \cite[Theorem 1.2]{DMrs} is proven under the additional assumption that $\varrho:\pi_1(X)\to \Delta(G)$ does not fix the point at infinity. Thus, the main focus of the proof of Lemma~\ref{lem:rs} is to adapt the proof of \cite[Theorem 1.2]{DMrs} to the case where $\mathcal C$ is not necessarily the entire $\Delta(G)$.
\end{rem}

\begin{proof}[Proof of \cref{lem:rs}]
To prove  existence, we use the fact that $\mathcal{C}$ is an NPC space and apply \cite[Theorem 1.1]{DMrs}  for which the assumptions are:  
\begin{itemize}
\item[(A)] the action of $\varrho_Y(\pi_1(Y))$ on $\cC$ does not fix a point at infinity, and
\item[(B)] $\varrho_Y(\lambda^j)$ is semisimple for each $j\in \{1,\dots, n\}$, where $\lambda^j \in \pi_1(Y)$ is the element associated to the loop around the puncture $p_j$.
\end{itemize}
Lemma~\ref{lem:ptinfty} (i) implies assumption (A) and Lemma~\ref{lem:semisimple} implies assumption (B). 

To prove the uniqueness, we  use the minimality of $\mathcal{C}$ and a slight variation of the proof of \cite[Theorem 1]{DMrs} where the target space is a building.    We shall assume on the contrary that $\tilde{u}_0, \tilde{u}_1:\widetilde Y \rightarrow \mathcal{C}$ are distinct $\varrho_Y$-equivariant harmonic maps with logarithmic energy growth. The following three steps lead to a contradiction to the assumption that $\varrho_Y$ does not fix a point at infinity.

\noindent {\bf Step 1}.   We first define an increased  sequence of  subsets of $\cC$  
\begin{equation} \label{Cn}
C_0\subset\cdots\subset C_k\subset \cdots
\end{equation}
inductively as follows:
First, let $C_0=\tilde{u}_0(\widetilde Y)$, and then let $C_k$ be the union of the images of all geodesic segments connecting points of $C_{k-1}$.  
The $\varrho_Y(\pi_1(Y))$-invariance of $C_0$ implies the $\varrho_Y(\pi_1(Y))$-invariance of $C_k$.  The set  $\bigcup_{k=0}^{\infty}C_k$ is the convex hull of the image of $\tilde{u}_0$, and   the minimality of $\mathcal{C}$ implies 
\[
 \mathcal{C} =\overline{\bigcup_{k=0}^{\infty} C_k.} 
\]

\noindent {\bf  Step 2}.  To each $Q \in \mathcal{C}$, we assign a geodesic segment $\bar \sigma^Q$ in $\mathcal{C}$ as follows:
First, for  $Q=\tilde{u}_0(q) \in C_0$,  let 
\begin{equation} \label{geoseg}
\bar \sigma^Q:[0,1] \rightarrow \mathcal C, \ \ \bar \sigma^Q(t)=(1-t)\tilde{u}_0(q)+t \tilde{u}_1(q).  
\end{equation} 
In the above, the weighted sum $(1-t)P+tQ$ is used to  denote the points on the geodesic segment connecting    $P$ and $Q$.  Note that $\bar \sigma^Q$ is well-defined    by \cite[(3.1), (3.3)]{DMunique}.
Since $\mathcal{C}$ is a convex subset of $\Delta(G)$, $\tilde{u}_0$ and $\tilde{u}_1$ are harmonic as maps into $\Delta(G)$, we can thus apply
\cite[(3.16)]{DMunique} to conclude that  $\{\bar \sigma^Q\}_{Q \in C_0}$ is a  family of pairwise parallel of geodesic segments of  uniform length.  (We can assume they are all unit length by normalizing the target space.)  Since $\tilde{u}_0$ and $\tilde{u}_1$ are both $\varrho_Y$-equivariant, the assignment $Q \mapsto \bar \sigma^Q$ is  $\varrho_Y(\pi_1(Y))$-equivariant; i.e.~$
\varrho_Y(\gamma) \bar \sigma^Q = \bar \sigma^{\varrho_Y(\gamma)Q}$ for any $Q \in C_0$ and  $\gamma \in  \pi_1(Y)
$.

For $n \in \mathbb{N}$,  we inductively  define a $\varrho_Y(\pi_1(Y))$-equivariant map from $C_n$ to a family of pairwise parallel geodesic segments  as follows:  For any pair of points $Q_0, Q_1 \in C_{n-1}$, apply the  \emph{Sandwich Lemma} of [BH, II.2.12 Exercise] with vertices $Q_0, Q_1, P_0:=\bar \sigma^{Q_0}(1), P_1:=\bar \sigma^{Q_1}(1)$ to define a one-parameter family of parallel geodesic segments $\bar \sigma^{Q_t}:[0,1] \rightarrow \mathcal{C}$ with initial point $Q_t=(1-t)Q_0+tQ_1$ and terminal point $P_t=(1-t)P_0+tP_1$.  
The inductive hypothesis implies that  the map $Q \mapsto \bar \sigma^Q$ defined on $C_n$ is also  $\varrho_Y(\pi_1(Y))$-equivariant.    Finally, consider $Q \in \mathcal C$ such that  $Q_i \to Q$  where $Q_i \in \cup_{k=1}^{\infty} C_k$.  In this case, let   $ \sigma^{Q_i}$ be the corresponding $\rho_Y(\pi_1(Y))$-invariant geodesic segments and let $\sigma^Q$ be the limit of $\sigma^{Q_i}.$ 
The above construction defines a $\varrho_Y(\pi_1(Y))$-equivariant map  
\[
Q \mapsto \bar \sigma^Q
\]
from $\mathcal{C}$ 
to  a family of pairwise parallel geodesic  segments contained in $\mathcal{C}$.

\noindent {\bf  Step 3.} We extend these geodesic segments into a geodesic ray as follows:  
For $Q \in \mathcal{C}$, we inductively construct a sequence $\{Q_i\}$ of points in $\mathcal{C}$ by first setting $Q_0=Q$ and then defining $Q_i= \bar \sigma^{Q_{i-1}}(\frac{3}{4})$.
Next, let 
\[
L^Q=\bigcup_{i=0}^\infty I^{Q_i}
\]
where $I^{Q_i}=\bar \sigma^{Q_i}([0,1])$. Therefore, $L^Q$ is  a  union of pairwise parallel geodesic segments.  Thus,  $\{L^Q\}_{Q \in \cC}$ is a family of  pairwise parallel geodesic rays. Moreover,  the $\varrho_Y(\pi_1(Y))$-equivariance of the map  $Q \mapsto \bar \sigma^Q$ implies 
$\varrho(\gamma)\bar \sigma^{Q_{i-1}}(\frac{3}{4})=\bar \sigma^{\varrho(\gamma)Q_{i-1}}(\frac{3}{4})$.  Thus, if $\{Q_i\}$ is the sequence constructed starting with $Q_0=Q$, then $\{\varrho_Y(\gamma)Q_i\}$ is the sequence constructed starting  with $\varrho_Y(\gamma)Q_0= \varrho_Y(\gamma)Q$.  We thus conclude 
\[
\varrho(\gamma)L^Q=\bigcup_{i=0}^{\infty} \varrho(\gamma) I^{Q_i} = \bigcup_{i=0}^{\infty} I^{\varrho(\gamma)Q_i}=L^{\varrho(\gamma)Q}.
\]  
We are done by letting the geodesic ray $\sigma^Q:[0,\infty) \rightarrow \mathcal{C}$ be the extension of the geodesic segment $\bar \sigma^Q:[0,1] \rightarrow \mathcal{C}$ parameterizing $L^Q$.
Consequently,  we have constructed a $\varrho_Y(\pi_1(Y))$-equivariant map  
\[
Q \mapsto \bar \sigma^Q
\]
from $\mathcal{C}$ 
to  a family of pairwise parallel geodesic rays in $\cC$.

The above construction shows that  $\varrho_Y(\pi_1(Y))$ fixes the equivalence class  $[L^Q]$ of geodesic rays. This implies that the action of $\varrho_Y(\pi_1(Y))$ on $\cC$ fixes a  point at infinity.  It contradicts with Assumption (A), and we prove  the uniqueness assertion. 
\end{proof}

\subsection{Pluriharmonicity}  
\begin{dfn}[Pluriharmonic maps] \label{def:pluriharmonic}
Let $X$ be  a complex manifold.  A locally Lipschitz map $u:X \rightarrow \Delta(G)$ is  \emph{pluriharmonic} if $u \circ \psi:\bD\to \Delta(G)$ is  harmonic for any holomorphic map $\psi: \bD \rightarrow X$. 
\end{dfn} 
We will prove that in order to establish the pluriharmonicity of a harmonic map $u$ to the Euclidean building, it is sufficient to verify it over the regular set of $u$.

\begin{lem} \label{phequiv}
Let $u: U = \mathbb{D}^n \rightarrow \Delta(G)$ be a harmonic map with respect to the standard Euclidean metric on $U = \mathbb{D}^n$.  If  $\partial \bar{\partial} u = 0$ on the regular set $\mathcal{R}(u)$, then $u$ is pluriharmonic.  
\end{lem}
\begin{rem}
	 Note that if $x \in \mathcal{R}(u)$, we can select a neighborhood $\Omega_x$ of $x$ and an apartment $A$ such that $u(\Omega_x) \subset A$. Our assumption implies that, upon identifying $A \simeq \mathbb{R}^N$, the map $u: \Omega_x\rightarrow \mathbb{R}^N$ is smooth and satisfies $\partial \bar{\partial} u = 0$. 
\end{rem}

  \begin{proof}
Since pluriharmonicity is a local property, we are free to shrink $U$ and localize around any given point. We first establish the following claim: if $\bD \hookrightarrow U$ is an embedded holomorphic disk, then the restriction of $u$ to $\bD$ is holomorphic.

After possibly shrinking $U$, we can choose an admissible coordinate system $(U; z_1, z_2, \dots, z_n)$ such that $\bD = (z_2 = \dots = z_n = 0)$.
  Denote $z_*=(z_2, \dots, z_n)$ 
and let
  \[
  \bD^{z_*}:=\bD \times \{z_*\} \simeq \bD.
   \] 
Recall that the singular set $\mathcal S(u)$  of $u$ has Hausdorff codimension at least two by \cref{gs}. It follows from  \cite{Shi68} that, for almost every $z_* \in \bD^{n-1}$,  the Hausdorff dimension
\begin{equation} \label{hd0}
\dim_{\mathcal H}(S^{z_*})=0,
\end{equation}
 where $\ S^{z_*}:=\mathcal S(u) \cap \bD^{z_*}$.
Let $u_{z_*}=u|_{\bD^{z_*}}$ and $R^{z_*} = \mathcal R(u) \cap \bD^{z_*}$, where $\cR(u)$ denotes the set of regular points of $u$.

  Let $z_*$ be such that (\ref{hd0}) holds.  Let $\Omega \subset R^{z_*}$ be any Lipschitz domain  such that $u_{z_*}(\Omega) \subset A$ where $A \simeq \bR^N$ is an apartment of $\Delta(G)$.  Let $\Pi:\Delta(G) \rightarrow A$ be the closest point projection map into $A$. The differential equality $\partial \bar \partial u=0$ is the first variation formula for $u_{z_*}: \bD^{z_*}   \rightarrow A\simeq \bR^N$ and thus  $E^{u_{z_*}}[\Omega] \leq E^v[\Omega]$ for any comparison map $v:\Omega \rightarrow A$.  For a comparison map $v:\Omega \rightarrow \Delta(G)$ not  mapping into $A$, we have
$E^{u_{z_*}}[\Omega] \leq E^{\Pi \circ v}[\Omega] \leq E^{v}[\Omega]$ since the projection map $\Pi$ is distance decreasing.  This implies that $u_{z_*}$ is a harmonic map when restricted to the regular set $R^{z_*}$.  

We now show that $u_{z_*}$ is harmonic as a map from $\bD^{z_*}$.
Let $v:\bD^{z_*} \rightarrow \Delta(G)$ be a harmonic map with the same boundary values as $u_{z_*}$.
Since both $u_{z_*}$ and $v$ are smooth harmonic maps in $\bD^{z_*} \backslash S^{z_*}$, the function $d^2(u_{z_*},v)$ is subharmonic in $\bD^{z_*} \backslash S^{z_*}$ (cf.~\cite[Remark 2.4.3]{KS}).  By (\ref{hd0}), for any $j \in \bN$, there exists a open cover $\{B_{r_k}(p_k)\}_{k=1}^N$ of $S^{z_*}$ such that $\sum_{k=1}^N r_k < \frac{1}{j}$.  For each $k=1, \dots, N$, let $\varphi_k$ be a smooth function on $\bD^{z_*}$ satisfying the following properties: $0 \leq \varphi_k \leq 1$,  $\varphi_k$ is identically equal to 0 in $B_{r_k}(z_k)$,  $\varphi_k$ is identically equal to 1 outside $B_{2r_k}(z_k)$  and $|\nabla \varphi_k| \leq \frac{2}{r_k}$.  Let $\phi_j=\Pi_{k=1}^N \varphi_k$.  
For any smooth function $\eta \geqslant 0$  with compact support in  $\bD^{z_*}$,  we have
\begin{eqnarray} \label{gg}
0 & \leq  & \int_{\bD^{z_*}}(\eta \phi_j) \triangle d^2(u_{z_*},v) \frac{idz_1 \wedge d\bar z_1}{2}
\nonumber \\
& = & -\int_{\bD^{z_*}} \phi_j \nabla \eta \cdot  \nabla d^2(u_{z_*},v) \frac{idz_1 \wedge d\bar z_1}{2} 
\   -\int_{\bD^{z_*}} \eta \nabla \phi_j \cdot  \nabla d^2(u_{z_*},v) \frac{idz_1 \wedge d\bar z_1}{2}.
\end{eqnarray}
Because the Lipschitz constants  of  $u_{z_*}$ and $v$ are bounded  in the support of $\eta$,  
$$|\nabla \phi_j| \leq \sum_{k=1}^N |\nabla \varphi_k| \leq \sum_{k=1}^{N}\frac{2}{r_k}$$ and the support of $\varphi_k$ is contained in a disk of area $\pi (2r_k)^2$ , there exists a constant $C>0$ that can be chosen independently of $j$ such that 
\begin{eqnarray*}
\left| \int_{\bD^{z_*}} \eta \nabla \phi_j \cdot  \nabla d^2(u_{z_*},v) \frac{idz_1 \wedge d\bar z_1}{2} \right|  & \leq & 
\sum_{k=1}^N \int_{\bD^{z_*}}|\nabla  \varphi_k|  | \nabla d^2(u_{z_*},v) | \frac{idz_1 \wedge d\bar z_1}{2}  \\
& \leq & 
\sum_{k=1}^N\sup_{z \in {\rm supp}(\eta)} |\nabla d^2(u_{z_*}(z),v(z))| \cdot  \frac{2}{r_k}  \cdot \pi (2r_k^2) < \frac{C\pi }{j}.
\end{eqnarray*}
Thus, letting $j \rightarrow \infty$ in (\ref{gg}), we obtain
\[
0 \leq -\int_{\bD^{z_*}}  \nabla \eta \cdot  \nabla d^2(u_{z_*},v) \frac{idz_1 \wedge d\bar z_1}{2}.
\]
In other words, $d^2(u_{z_*},v)$ is (weakly) subharmonic in $\bD^{z_*}$.  Since $d^2(u_{z_*},v)=0$ on $\partial \bD^{z_*}$, the maximum principle implies $d^2(u_{z_*},v)=0$ in $\bD^{z_*}$.  Thus, $u_{z_*}=v$ and hence $u_{z_*}$ is harmonic for a.e.~$z_* \in \bD^{n-1}$.  Since the uniform limit of  harmonic maps is harmonic, $u_{z_*}$ is harmonic for all $z_* \in \bD$.  This completes  the proof of the assertion.

Now let $\psi: \bD \rightarrow U$ be a holomorphic map and $C$ be the set of critical points of $\psi$.  There is a neighborhood $V$ of any $z \in \bD \backslash C$ such that $\psi|_V$ is an embedding. The composition $u \circ \psi|_V$ is harmonic  by the above assertion.  Thus,    $u \circ \psi$ is  harmonic in $\bD \backslash C$.  Letting $v: \bD \rightarrow \Delta(G)$ be a harmonic map with the same boundary values as $u$, we can use the same argument above to prove $d^2(u,v)=0$. Hence $u$ is harmonic, and the lemma is proved. 
\end{proof}


\subsection{Existence of pluriharmonic map from quasi-projective surfaces} 
\begin{thm} \label{lem:ks}
Let $(\overline{X},\Sigma)$ be a log smooth pair with $\dim X=2$.  Let  $L$ be a sufficiently ample line bundle on $\overline{X}$. Let $G$ be a semi-simple algebraic group over a non-archimedean local field $K$.  Assume that $\varrho: \pi_1(X) \to G(K)$ is a Zariski-dense representation, and that $\mathcal{C} \subset \Delta(G)$ is a non-empty minimal   convex $\varrho(\pi_1(X))$-invariant closed subset (cf.~\Cref{lem:ptinfty}). 
	
Fix   a Kähler metric $g$ on $X$ of Poincaré type as described in \Cref{sec:metric}. 	Then there exists a $\varrho$-equivariant harmonic map $\tilde{u}: \widetilde{X} \to \mathcal{C}$, where $\varrho$ is considered as a representation $\pi_1(X) \to \mathrm{Isom}(\mathcal{C})$ as defined in \eqref{hatrho}, such that the following holds:
	\begin{enumerate}[label=(\arabic*)]
		\item \label{item:prop1} The map $\tilde{u}$ is pluriharmonic.
		\item \label{item:prop2}  The map $\tilde{u}$ has logarithmic energy growth with respect to $(\overline{X}, L)$.
		\item   Properties in \Cref{item:prop1,item:prop2} uniquely characterize this map $\tilde{u}$. 
	\end{enumerate}
\end{thm} 
\begin{proof}
If $\varrho(\pi_1(X))$ is bounded, then $\varrho(\pi_1(X))$ fixes a point $P \in \Delta(G)$, allowing us to define $\tilde{u}(x) = P$ for any $x \in \widetilde{X}$. Therefore, we assume that $\varrho(\pi_1(X))$ is unbounded. In this case, $\cC$ must also be unbounded. Otherwise, by the Bruhat-Tits fixed point theorem, $\cC$ would have a barycenter that is fixed by $\varrho(\pi_1(X))$, contradicting our assumption that $\varrho(\pi_1(X))$ is unbounded. 

   The existence of a $\varrho$-equivariant harmonic map 
\begin{equation} \label{equivharmonic}
\tilde{u}: \widetilde X \rightarrow \mathcal C \subset \Delta(G)
\end{equation}
 follows from \cite{DMks}.  Indeed, 
the closed unbounded convex subset $\mathcal C \subset \Delta(G)$ is an NPC space.  Then \cite[Theorem 1]{DMks} asserts that there exists  a $\varrho$-equivariant harmonic map $\tilde{u}: \widetilde{X} \rightarrow \mathcal C$.  
Let $u$ be its  corresponding section (cf.~\Cref{sec:ems}).  

\medspace

\noindent \begin{(i)}
The harmonic map $\tilde{u}$ is in fact a pluriharmonic map.  We defer the details of this proof to Theorem~\ref{thm:pu} in \Cref{sec:pluriharmonic}. 
\end{(i)}
\\
  \begin{(ii)}
As $\dim_{\bC} X=2$, it suffices to check that for  
any $s \in \bU$  with $\bU$ defined in (\ref{eq:U}),  $u_s:=u|_{Y_s}$ has logarithmic energy growth, where $\overline{Y}_{\!\! s}:=s^{-1}(0)$ and $Y_s:=\overline{Y}_{\!\! s}\cap X$. 
Let $p \in \Sigma \cap \overline{Y}_{\!\! s}$.   Since $\overline{Y}_{\!\! s} + \Sigma$ is a normal crossing divisor,    $p$ is a smooth point of $\Sigma$. 
By  \cite[Theorem 6.6]{DMks}, there exists  an admissible coordinate neighborhood $(U;z_1,z_2)$ centered at   $p$, and a positive constant $C$ such that   $U\cap \Sigma=(z_1=0)$ and 
\begin{equation} \label{fromjoel}
 \int \limits_{\bD^* \times \bD} 
\left( \left| \frac{\partial u}{\partial z_1}(z_1,z_2) \right|^2 
- \frac{L^2_\gamma}{2\pi}  \frac{1}{|z_1|^2} \right) \frac{idz_1 \wedge d\bar z_1}{2} \wedge \frac{idz_2 \wedge d\bar z_2}{2} \leq C,
\end{equation}
where  $L_\gamma$ is the translation length of $\varrho(\gamma)$ with $\gamma \in \pi_1(X)$ corresponding to the loop $\theta \mapsto (r e^{i\theta},0)$.   
\begin{claim}
	 There is a positive constant $C_0$ such that 
	 \begin{equation} \label{esjoel1}
	 	\left| \frac{\partial u}{\partial z_2} (z_1,z_2)\right|\leq C_0, \ \    \forall (z_1,z_2) \in  \bD^*_\frac{1}{2} \times  \bD_\frac{1}{2}.
	 \end{equation}
\end{claim}
\begin{proof}
	  By  \cref{def:translation}, we have
	 \begin{align}\nonumber
	 	\frac{L^2_\gamma}{2\pi} \frac{1}{r} &\leq    \frac{1}{2\pi r}  \left(\int_0^{2\pi} 
	 	\left| \frac{\partial u}{\partial \theta}(re^{i\theta},z_2)  \right| \frac{d\theta}{r} \right)^2 \leq \frac{r}{2\pi }  \left(\int_0^{2\pi} 
	 	\left| \frac{\partial u}{\partial z_1}(re^{i\theta},z_2) \right|  d\theta\right) ^2 
	 	\\  \label{lb}
	 &	\leq  \int_0^{2\pi} 
	 	\left| \frac{\partial u}{\partial z_1}(re^{i\theta},z_2) \right|^2 rd\theta
	 \end{align}
	 for any $z_2\in \bD$ and $r\in (0,1)$. Here the last inequality follows from the Cauchy-Schwarz inequality. 
	 Thus,  (\ref{fromjoel}) and (\ref{lb}) imply that
	 \begin{equation} \label{z1z1}
	 	0 \leq \int \limits_{{\mathbb D}_{r_1,r_2}} 
	 	\left| \frac{\partial u}{\partial z_1}(z_1,z_2) \right|^2 \frac{idz_1 \wedge d\bar z_1}{2}
	 	- \frac{L^2_\gamma}{2\pi}  \log \frac{r_2}{r_1}  \leq C(z_2), \ \ \mbox{for a.e.}~z_2 \in {\bD}_{\frac{1}{2}}
	 \end{equation}
	 where  $C(z_2)$ is a non-negative integrable function defined on ${\mathbb D}_\frac{1}{2}$.   
	 
	 We will next show that,  we can replace $C(z_2)$ in (\ref{z1z1}) by a positive constant $C_0$ that depends only on $\varrho(\gamma)$ and the Lipschitz constant of  $u|_{\partial \mathbb D \times \mathbb D}$. Indeed,  \cite[Theorem 3.1]{DMrs} and   (\ref{z1z1}) imply that for each $z_2$, the map $z_1 \mapsto u_{z_2}:=u(z_1,z_2)$ is the unique Dirichlet solution for the boundary value $u_{z_2}\big|_{\partial \bD_\frac{1}{2}}$ and that  the constant $C_0$ depends only on the translation length $L_\gamma$ and the Lipschitz constant of $u_{z_2}\big|_{\partial \bD_\frac{1}{2}}$.   Here we are using the fact that the isometries of $\Delta(G)$ are always semisimple when $G$ is semisimple by \cref{lem:semisimple}.   Since $u$ is locally Lipschitz,  the Lipschitz constant of $u_{z_2}\big|_{\partial \bD_\frac{1}{2}}$ has a uniform bound for all $z_2 \in \bD_\frac{1}{2}$.  Hence, the choice of $C_0$ can be made independently of $z_2$.  The lower semicontinuity of energy then implies that (\ref{z1z1}) with $C_0$ instead of $C(z_2)$ holds for all $z_2 \in \bD_\frac{1}{2}$ (not just a.e.~$z_2$); i.e.
	 \begin{equation} \label{esjoel2}
	 	0 \leq \int \limits_{{\mathbb D}_{r_1,r_2}} 
	 	\left| \frac{\partial u}{\partial z_1}(z_1,z_2)\right|^2  \frac{i dz_1 \wedge d\bar z_1}{2}
	 	- \frac{L^2_\gamma}{2\pi}  \log \frac{r_2}{r_1} \leq C_0, \ \ \forall z_2 \in \bD_\frac{1}{2}.
	 \end{equation}
	   Since for each  $z_2\in \bD$,  $u_{z_2}$ is a harmonic section of logarithmic energy growth, the proof of \cite[Lemma 4.1]{DMks} implies  (\ref{esjoel1}).  For the sake of completeness, we summarize this argument here.  Let $z_2, z_2' \in \bD_\frac{1}{2}$ and\[
	 \delta_{z_2,z_2'}(z_1)=d(\tilde{u}(z_1,z_2), \tilde{u}(z_1,z_2'))
	 .\]
	 Since $u_{z_2}$ is harmonic for each $z_2$,  $\delta^2_{z_2,z_2'}$ is a continuous subharmonic function defined in  $\bD^*$ (cf.~\cite[Remark 2.4.3]{KS}).  Since $ {u}_{z_2}$ and $ {u}_{z_2'}$  have logarithmic energy growth, by  \cite[Remark 3.12]{DMrs}, one has
	 $$
	 \lim_{|z_1| \rightarrow 0} \delta^2_{z_2,z_2'}(z_1) +\ep \log |z_1| =-\infty.
	 $$  
	 Thus, $\delta^2_{z_2, z_2'}$
	 extends to subharmonic function on $\bD_\frac{1}{2}$ (cf.~\cite[Lemma~3.2]{DMrs}).  We can apply the  maximum principle to conclude that  
	 \begin{equation} \label{maxprin}
	 	\delta^2_{z_2,z_2'}(z_1) \leq 
	 	\sup_{\zeta \in \partial \bD} \delta^2_{z_2,z_2'}(\zeta)\leq  \Lambda^2 |z_2 -z_2' |^2, \ \  \forall z_1 \in \bD^*_\frac{1}{2}
	 \end{equation}
	 where  the constant $\Lambda$  can be chosen independently  of $z_2, z_2' \in \bD_\frac{1}{2}$ since $\tilde{u}$ is locally Lipschitz continuous.  This implies   \eqref{esjoel1}.
\end{proof}
 Consider a local trivialization of $L|_U \simeq U \times \bC$, and let $s_U \in \sO(U)$ denote the image of the section $s$ under this trivialization.
  Define
\[
\Phi: \bD \times \bD \rightarrow  \Phi(\bD \times \bD), \ \ \Phi(z_1,z_2) = (w_1,w_2), \ \ \left\{ \begin{array}{l}
 w_1  =  z_1
 \\
 w_2  =  s_U(z_1,z_2)
\end{array}.
\right.
\]  
The fact that $\overline{Y}_{\!\! s}\cap U=s_U^{-1}(0)$ intersects with  $(z_1=0)$ transversely implies that  $\frac{\partial s_U}{\partial z_2}(z_1,z_2) \neq 0$ for $(z_1,z_2)$ sufficiently close to $(0,0)$.  Thus, after  shrinking $U$, we can assume that  $\Phi$ defines a holomorphic change of coordinates  in $U$.
Define a  holomorphic function $\eta(w_1,w_2)$ by
\[
\Phi^{-1}(w_1,w_2)=(z_1,z_2),  \ \ \left\{ \begin{array}{l}
 z_1 = w_1
 \\
 z_2 = \eta(w_1,w_2)
\end{array}.
\right.
\]
Note that 
$
 w_1 \mapsto (w_1,\eta(w_1,w_2)) $
 defines $w_1$ as holomorphic coordinate of the Riemann surface $s^{-1}(w_2)$.  
 
Denote
\[
u_{w_2}(w_1) := u (w_1,\eta(w_1,w_2)).
\]
Whenever $u(w_1,\eta(w_1,w_2))$ is a regular point (cf.~Definition~\ref{def:sing} and Lemma~\ref{gs}), we apply the chain rule to obtain 
\begin{eqnarray}
\frac{d u_{w_2}}{d w_1} (w_1) & = &   \frac{\partial u}{\partial z_1} (w_1,\eta(w_1,w_2)) + \frac{\partial u}{\partial z_2} (w_1,\eta(w_1,w_2)) \frac{\partial \eta}{\partial w_1} (w_1, w_2).
\nonumber 
\end{eqnarray}
Since  $\left| \frac{\partial \eta}{\partial w_1} (w_1, w_2)\right|$ is bounded,  the estimate (\ref{esjoel1}) implies that there exists a constant $C>0$ such that 
\begin{eqnarray}
	\label{tcr} 
\left| \frac{d u_{w_2}}{d w_1} (w_1) \right|^2 & \leq & \left|\frac{\partial u}{\partial z_1} (w_1,\eta(w_1,w_2)) \right|^2+ C\left|\frac{\partial u}{\partial z_2} (w_1,\eta(w_1,w_2)) \right|+C.
\end{eqnarray}
Since the regular set  $\mathcal R(u)$ of $u$ is an open set  is of full measure, $\Phi(\mathcal R(u))$ is also an open set of full measure.  Furthermore, since $u$ locally Lipschitz continuous, the right hand side of (\ref{tcr}) is a bounded function.   Thus, we can subtract  $\frac{L_\gamma^2}{2\pi} \frac{1}{|w_1|^2}$ from both sides of (\ref{tcr})    and integrate over  $\bD^*_\ep \times  \bD_\ep$ for some small  $\ep>0$  such that $\Phi^{-1}(\bD_\ep \times \bD_\ep) \subset \bD_\frac{1}{2} \times \bD_{\frac{1}{2}}$ to obtain 
\begin{eqnarray} 
\lefteqn{\int \limits_{\bD^*_\ep \times \bD_\ep} 
\left( \left| \frac{d u_{w_2}}{d w_1}(w_1)\right|^2 - \frac{L_\gamma^2}{2\pi} \frac{1}{|w_1|^2} \right)  
\frac{idw_1 \wedge d\bar w_1}{2} \wedge \frac{idw_2 \wedge d\bar w_2}{2}
}
\nonumber
\\
\nonumber
& \leq & 
\int \limits_{\bD^*_\ep \times  \bD_\ep} 
\left(  \left| \frac{\partial u}{\partial z_1} (w_1,\eta(w_1,w_2))\right|^2  - \frac{L_\gamma^2}{2\pi} \frac{1}{|w_1|^2}  \right) 
\frac{idw_1 \wedge d\bar w_1}{2} \wedge \frac{idw_2 \wedge d\bar w_2}{2}
\nonumber 
\\
& & \ + C\int \limits_{\bD^*_\ep \times  \bD_\ep} \left|\frac{\partial u}{\partial z_2} (w_1,\eta(w_1,w_2)) \right| \frac{idw_1 \wedge d\bar w_1}{2} \wedge \frac{idw_2 \wedge d\bar w_2}{2} +C
\nonumber 
\\
& = & 
\int \limits_{\Phi^{-1}(\bD^*_\ep \times  \bD_\ep)} 
\left(  \left| \frac{\partial u}{\partial z_1} (z_1,z_2)\right|^2  - \frac{L_\gamma^2}{2\pi} \frac{1}{|z_1|^2}  \right)
 \left| \frac{\partial s_U}{\partial z_2} \right|^2 
\frac{idz_1 \wedge d\bar z_1}{2} \wedge \frac{idz_2 \wedge d\bar z_2}{2}
\nonumber 
\\
& & \ + C\int \limits_{\Phi^{-1}(\bD^*_\ep \times  \bD_\ep)} \left|\frac{\partial u}{\partial z_2}(z_1,z_2) \right|  \left| \frac{\partial s_U}{\partial z_2} \right|^2  \frac{idz_1 \wedge d\bar z_1}{2} \wedge \frac{idz_2 \wedge d\bar z_2}{2} +C.
\nonumber
 \label{udub}
\end{eqnarray} 
Since  $\left| \frac{\partial s_U}{\partial z_2} \right|^2$ is bounded,   (\ref{fromjoel}) implies that  the first integral on the right hand side of the above inequality is finite.  By \eqref{esjoel1}, the second integral on the right hand side is also finite. 
Thus,  we conclude that for a.e.~$w_2 \in \bD_\ep$, 
\[
\int \limits_{\bD_\ep^*}
\left( \left| \frac{d u_{w_2}}{d w_1}(w_1)\right|^2 - \frac{L_\gamma^2}{2\pi} \frac{1}{|w_1|^2} \right) \frac{idw_1 \wedge d\bar w_1}{2}
\leq C(w_2).
\]
We can now proceed as before (cf.~from (\ref{z1z1}) to (\ref{esjoel2})) to show that $C(w_2)$ can be replaced by a constant $C$ independent of $w$; i.e. there exists a  positive constant $C$   such that for every $w_2 \in \bD_\ep$ and $0<r_1<r_2<\ep$,  we have
\begin{equation} \label{w1}
0 \leq \int \limits_{{\mathbb D}_{r_1,r_2}} 
\left| \frac{\partial u_{w_2}}{\partial w_1} \right|^2 \frac{i dw_1 \wedge d\bar w_1}{2}
- \frac{L^2_\gamma}{2\pi}  \log \frac{r_2}{r_1}  \leq C.
\end{equation}
Note that the lower bound of 0 follows from (\ref{lb}).
 Applying (\ref{w1}) with $w_2=0$,  we conclude that $u_0=u_s$ has logarithmic energy growth in the sense of \cref{logenenergy}.  \end{(ii)}  
 \\
\noindent \begin{(iii)}
To  prove the uniqueness assertion, 
let $v:\widetilde{X} \rightarrow \mathcal{C}$ be another  $\varrho$-equivariant pluriharmonic map into $\mathcal{C}$ of logarithmic energy growth with respect to $(\overline{X},L)$.  For any $q \in X$, there exists  a  section $s \in \mathbb  U(q)$ with $\bU(q)$ defined in \eqref{eq:Uq}.  We define $\varrho_{Y_s}:=\varrho|_{\pi_1(Y_s)}$.  By the definition of $\bU(q)$ and \Cref{thm:Lefschetz}, $\varrho_{Y_s}(\pi_1(Y_s))=\varrho(\pi_1(X))$ and thus $\varrho_{Y_s}$ does not fix a point at infinity of $\cC$.  
Consider the sections of the fiber bundle $\widetilde{X} \times_\varrho \mathcal{C} \rightarrow X$ defined by  the pluriharmonic maps $u$ and $v$, and denote their restrictions to $Y_s$ by  $u_{Y_s}: Y_s \rightarrow \widetilde Y_s \times_{\varrho_{Y_s}} \mathcal{C}$ and $v_{Y_s}: Y_s \rightarrow \widetilde Y_s \times_{\varrho_{Y_s}} \mathcal{C}$.   Since $Y_s$ is a Riemann surface,   the pluriharmonicity of $u$ and $v$ implies that $u_{Y_s}$ and $v_{Y_s}$ are  harmonic sections, and have logarithmic energy growth by \cref{logenenergy}. 
By the uniqueness assertion of   Lemma~\ref{lem:rs}, we  conclude $u_{Y_s}=v_{Y_s}$.   Since  $q$ is an arbitrary point in $X$, we conclude $u=v$. 
\end{(iii)}    
The proof of the theorem is accomplished. 
\end{proof}

\subsection{Proof of Theorem~\ref{thm:harmonicmaps} 
}
\label{sec:proof}
	%

\begin{proof}[Proof of \Cref{thm:harmonicmaps}] 
 The proof is organized into five steps. In the first step,   we construct  a  map  $\tilde{u}:\widetilde{X}\to \Delta(G)$   through an inductive process. Moving onto the second step, we establish that  such $\tilde{u}$ is   locally harmonic with respect to the Euclidean metric.  In the third step we  prove the pluriharmonicity  of $\tilde{u}$.  Subsequently, in the fourth step, we establish that $\tilde u$ is harmonic with respect to any K\"ahler metric on $X$. Finally, in the last step, we show the uniqueness of $\tilde u$.

\medspace

\noindent {\bf Step 1:} {\it We prove the existence of $u$}. Consider the following assertion:
\begin{itemize}
  \item[($\ast$)]   Let $\cC$ be a   non-empty minimal closed convex $\varrho(\pi_1(X))$-invariant subset  of $  \Delta(G)$ introduced in Lemma~\ref{lem:ptinfty}. Let $L$ be a sufficiently  ample line bundle   on $\overline X$.  
Then  there exists a $\varrho$-equivariant pluriharmonic map $\tilde{u}:\widetilde{X} \rightarrow \mathcal{C} \subset \Delta(G)$  of logarithmic energy growth with respect to $(\overline X,L)$.  Moreover, such  map $u$ is the  \emph{unique} $\varrho$-equivariant pluriharmonic map  into $\mathcal{C}$ of logarithmic energy  growth with respect to $(\overline{X},L)$. 
\end{itemize}

\vspace*{0.1in}

\noindent {\it Initial Step}. The statement $(\ast)$ is true for $\dim_{\mathbb C} X=2$ by Theorem~\ref{lem:ks}.

\medspace

\noindent {\it Inductive Step}.  We assume ($\ast$) whenever $\dim_{\mathbb C}X=2,\dots, n-1$.  
Now let $\dim_{\mathbb C}X=n\geqslant 3$.   
For each section   $s \in \mathbb U$ with $\bU$ as in \eqref{eq:U},   $\varrho_{Y_s}(\pi_1(Y_s))=\varrho(\pi_1(X))$ by \cref{thm:Lefschetz}. Thus, the inductive hypothesis implies that there exists a $\varrho_{Y_s}$-equivariant pluriharmonic map of logarithmic energy growth
\[
\tilde{u}_s: \widetilde Y_s \rightarrow  \mathcal{C}. 
\]
Denote the associated section by
$u_s: Y_s \rightarrow \widetilde Y_s \times_{\varrho_{Y_s}} \mathcal{C}
$
which can be viewed as a map 
$$
u_s: Y_s  \to \widetilde{X} \times_\varrho \mathcal{C}
$$
by (\ref{descendingmap}).

\begin{claim} \label{welldefined}   
For $q \in X$ and $s_1, s_2 \in \bU(q)$ with $\bU(q)$ defined in \eqref{eq:Uq}, we have
$
u_{s_1}(q) = u_{s_2}(q).
$
\end{claim}
\begin{proof}
For  $i=1,2$ and $q \in X$, we define $\bU(s_i,q)$ as follows:\\  
\[ 
\bU(s_i, q)= \{s \in \bU(q)\mid {\overline{Y}_{\!\! s}}\ \mbox{transversal to} \ {\overline{Y}}_{\!\! s_i} \ \mbox{and} \ \Sigma \cup \overline{Y}_{\!\! s}\cup {\overline{Y}}_{\!\! s_i} \ \mbox{is normal crossing}\}.
\] 
By \cref{lem:Bertini general},   $\bU(s_i, q)$ is a non-empty Zariski open  subset of $\bU(q)$. This implies $\bU(s_1, q) \cap \bU(s_2, q) \neq \varnothing$. 

Fix $s\in \bU(s_1, q) \cap \bU(s_2, q)$.
Let $\iota:Y_{\! s_i} \cap Y_s \rightarrow X$ be the inclusion map.  By \cref{thm:Lefschetz}, we know that $\pi_1(Y_{\! s_i} \cap Y_s)\to \pi_1(X)$,  $\pi_1(Y_{s})\to \pi_1(X)$ and $\pi_1(Y_{\! s_i})\to \pi_1(X)$ are all surjective.    By the inductive hypothesis, there exist pluriharmonic sections 
\[
u_s:  Y_s \rightarrow \widetilde Y_s \times_{\varrho_{Y_s}}  \mathcal{C} \ \mbox{ and } \ 
  u_{s_i}:  Y_{\! s_i} \rightarrow \widetilde Y_{s_i} \times_{\varrho_{Y_{s_i}}} \mathcal{C}.
\] 
which are of logarithmic energy growth with respect to $(\overline{Y}_{\!\! s},L|_{\overline{Y}_{\!\! s}})$ and  $(\overline{Y}_{\!\! s_i},L|_{\overline{Y}_{\!\! s_i}})$ respectively. 
By the uniqueness assertion of the inductive hypothesis, the restriction maps
\[
u_s|_{Y_{s_i} \cap Y_s}: Y_{s_i} \cap Y_s \rightarrow {\widetilde{Y_{s_i} \cap Y_s}} \times_{\varrho_{Y_{s_i} \cap Y_s}}  \mathcal{C}
\]
and 
\[
u_{s_i}|_{Y_{s_i} \cap Y_s}: Y_{s_i} \cap Y_s \rightarrow {\widetilde{Y_{s_i} \cap Y_s}} \times_{\varrho_{Y_{s_i} \cap Y_s}}  \mathcal{C}.
\]
defined in \eqref{restrictionsection} 
are in fact the same section.
Since $q \in Y_{s_i} \cap Y_s$, we conclude $u_{s_i}(q) =u_s(q)$. 
\end{proof}
Therefore, by \Cref{welldefined}, we can define
\[
u:X \rightarrow \widetilde{X} \times_\varrho \mathcal{C}, \ \ u(q):=u_s(q) \text{ for } s\in \bU(q).
\] 
To complete the inductive step, we are left to  show that  $u$  is  a pluriharmonic section of  logarithmic  energy growth with respect to $(\overline{X},L)$, and moreover is unique amongst such pluriharmonic sections of $\widetilde{X} \times_\varrho \mathcal C \rightarrow X$. 

\medspace

\noindent {\bf Step 2:} {\it  We prove that $u$ is locally harmonic with respect to  the Euclidean metric}. 
Let $T:=|L|^{\times (n-1)}$ and let $T^\circ$ be the Zariski open subset of $T$ defined in \cref{prop:Bertini}.  We first apply \cref{prop:Bertini} to prove the following:
	\begin{claim} \label{coornbhd}
		For every $x_0 \in X$, there exists a coordinate system  $(U; z_1, \dots, z_n)$ centered at $x_0$ such that for every $i=1,\ldots,n$ and every fixed  $w:=(z_1, \dots, z_{i-1}, z_{i+1}, \dots, z_n) \in \bD^{n-1}$, the disk 
		\[
		\bD_w:=\{ (z_1, \dots, z_{i-1}, z, z_{i+1}, \dots, z_n):  |z|<1\}
		\]
		is contained in some complete intersection $H_1 \cap \dots \cap H_{i-1} \cap H_{i+1} \cap \dots \cap H_n$, where $(H_1,\ldots,H_{i-1},H_{i+1},\ldots,H_n)\in T^\circ$.
	\end{claim} 
	\begin{proof}
		To prove Claim~\ref{coornbhd},  we  fix   $s_0 \in H^0(\overline X,L)$ such that $x_0 \notin (s_0 =0)$.    By \cref{item:tangent}, we can find   $s_1,\ldots,s_n\in H^0(\overline{X}, L)$ such that 
		\begin{enumerate}[label=(\alph*)]
			\item the hypersurfaces ${\overline{Y}}_{\! s_1},\ldots, {\overline{Y}}_{\!\!  s_n}$ are smooth and intersect transversely, where ${\overline{Y}}_{\!\! s_i}:=s_i^{-1}(0)$. 
			\item \label{item:normal} $\sum_{i=1}^{n}{\overline{Y}}_{\!\! s_i}+\Sigma$ is normal crossing.
			\item   $x_0\in   {\overline{Y}}_{\!\! s_1}\cap \ldots\cap {\overline{Y}}_{\!\!  s_n}$. 
		\end{enumerate}  
		Define
		$u_i:=\frac{s_i}{s_0}$ which is a global rational function of $\overline X$ and regular on some neighborhood $U$ of $x_0$.  After shrinking $U$ properly, the map
		\begin{align*}
			\varphi: U&\to \bC^n\\
			x&\mapsto (u_1(x), u_2(x),\ldots, u_n(x))
		\end{align*}
		is biholomorphic to its image $\varphi(U)=\mathbb D_\ep^n$.
		In particular, this defines an admissible coordinate system $(U; z_1, \dots, z_n; \varphi)$ centered at $x_0$. Fix any $i \in \{1, \dots, n\}$. For any $\zeta = (\zeta_1, \dots, \zeta_{i-1}, \zeta_{i+1}, \dots, \zeta_n) \in \mathbb{D}_\epsilon^{n-1}$, the disk
		\[
		\bD_\zeta := \left\{ (\zeta_1, \dots, \zeta_{i-1}, z, \zeta_{i+1}, \dots, \zeta_n) \in \mathbb{D}^n_\epsilon \mid |z| < \epsilon \right\}
		\]
		is contained in
		\[
		U \cap (z_1 - \zeta_1 = 0) \cap \dots \cap (z_{i-1} - \zeta_{i-1} = 0) \cap (z_{i+1} - \zeta_{i+1} = 0) \cap \dots \cap (z_n - \zeta_n = 0).
		\]
		After possibly shrinking $\varepsilon$, for any $\zeta \in U$, the divisor $E_j(\zeta) := U \cap (z_j - \zeta_j = 0)$ in $U$ coincides with $(s_j - \zeta_j s_0 = 0) \cap U$ for each $j \in \{1, \dots, i-1, i+1, \dots, n\}$, and we have
		$$
	\left(	\overline{Y}_{\!\! s_1 - \zeta_1 s_0},\cdots,\overline{Y}_{\!\! s_{i-1} - \zeta_{i-1} s_0},\overline{Y}_{\!\! s_{i+1} - \zeta_{i+1} s_0},\cdots,\overline{Y}_{\!\! s_{n} - \zeta_{n} s_0}\right)\in T^\circ.		$$
  By \Cref{item:normal}, we can shrink $\ep > 0$ further such that the divisor $\sum_{j \neq i} E_j(\zeta) + \Sigma \cap U$ remains a normal crossing divisor for any $\zeta \in \mathbb{D}_\ep^{n-1}$. \Cref{coornbhd} follows after composing $\varphi$ with the rescaling:
		\[
		\begin{aligned}
			\mathbb{D}_\ep^n &\to \mathbb{D}^n \\
			(z_1, \dots, z_n) &\mapsto \left( \frac{z_1}{\ep}, \dots, \frac{z_n}{\ep} \right).
		\end{aligned}
		\]
		Thus, for any $w = (z_1, \dots, z_{i-1}, z_{i+1}, \dots, z_n) \in \bD^{n-1}$, the disk $\bD_w$ is contained in the curve
		\[
		\mathcal{R}_w = Y_{s_1 - z_1 s_0} \cap \dots \cap Y_{s_{i-1} - z_{i-1} s_0} \cap Y_{s_{i+1} - z_{i+1} s_0} \cap \dots \cap Y_{s_n - z_n s_0}.
		\]
 The claim is thus proved. 
	\end{proof} 
	According to \cref{prop:Bertini,coornbhd}, for any $w = (z_1, \dots, z_{i-1}, z_{i+1}, \dots, z_n) \in \bD^{n-1}$,  we can define a holomorphic map
	\begin{align*}
		\nu:\bD^{n-1} &\to T^\circ\\
		w&\mapsto 	\left(	\overline{Y}_{\!\! s_1 - \zeta_1 s_0},\cdots,\overline{Y}_{\!\! s_{i-1} - \zeta_{i-1} s_0},\overline{Y}_{\!\! s_{i+1} - \zeta_{i+1} s_0},\cdots,\overline{Y}_{\!\! s_{n} - \zeta_{n} s_0}\right).
	\end{align*} 
Let $\pi: \mathscr{R} \to T$ be the universal family of complete intersection curves in $X$ as defined in \Cref{prop:Bertini}. Consider the base change $\mathscr{R}' := \mathscr{R} \times_T \bD^{n-1} \to \bD^{n-1}$ of $\mathscr{R}$ over $\bD^{n-1}$ via $\nu$. By \Cref{prop:Bertini}, the family $\mathscr{R}' \to \bD^{n-1}$ is topologically trivial, with $\mathcal{R}_w$ denoting the fiber over each $w \in \bD^{n-1}$.

	We now proceed with the proof that $u$ is harmonic with respect to the Euclidean metric on $\bD^n$.  
	The first step is to show that, after shrinking $U$ if necessary,  $u$ is Lipschitz continuous in $U$.
	Fix $w:=(z_1, \dots, z_{i-1}, z_{i+1}, \dots, z_n) \in \bD^{n-1}$.
	The restriction of $u$ to $\mathcal R_w$, denoted as $u_w$, is the unique harmonic section 
	\[
	u_w:  \mathcal R_w \rightarrow \tilde{\mathcal R}_w \times_{\varrho_{\mathcal R_w}}\cC
	\]
	where $\varrho_{\mathcal R_w}:= \varrho \circ (\iota_{\mathcal R_w})_*$  with $\iota_{\mathcal R_w}:\mathcal R_w \hookrightarrow X$  the inclusion map.
	We endow $\mathcal R_w$ with a conformal hyperbolic metric $h_w$.   In particular, $h_0 :=h_{(0,\dots,0)}$ is  the conformal hyperbolic metric on $\mathcal R_0:=\mathcal R_{(0,\dots,0)}$. 
	
	To estimate the local Lipschitz constant of $u_w$, we recall its construction in \cite{DMrs}.  The first step is to construct a locally Lipschitz $\varrho_{\mathcal R_w}$-equivariant map $k:\widetilde{\mathcal R}_w \rightarrow \cC$ using \cite[Proposition 2.6.1]{KS}.  Let $\gamma_1, \dots, \gamma_p$ be the generators of $\pi_1(\mathcal R_w)$ and let 
	\[
	\delta(P)=\max_{i=1, \dots, p} d(\varrho_{\mathcal R_w}P,P).
	\]
	Fix $P' \in \Delta(G)$ and let $\delta'=\delta(P')$.  The Lipschitz constant $L(x)$ of $k$ at $x$ is bounded by 
	\[
	L(x) \leq C \delta'
	\]
	where $C$ depends on the metric $h_w$.  As remarked in the last paragraph of the  proof of \cite[Proposition 2.6.1]{KS}, $C$ can be chosen independently of $h_w$ since $h_w$ has sectional curvature bounded from below.   
	
	In \cite{DMrs}, we construct a prototype map, i.e., a $\varrho_{\mathcal R_w}$-equivariant map $v:\tilde{\mathcal R}_w \rightarrow \Delta(G)$ that is equal to $k$ away from disks containing the punctures and equal to the Dirichlet solution on the punctured disks with boundary value given by $k$.  In this way, we construct a locally Lipschitz map $v$  with controlled energy towards the puncture.  The energy of $u$  away from the punctures is bounded by the energy of $v$ away from the punctures.  Therefore, the  local Lipschitz constant of $u_w$ depends on the local Lipschitz constant of  $v$ which in turn depends on the local Lipschitz constant of $k$.  In summary, the local Lipschitz constant of $u_w$ depends only on $\delta'$.

According to \cref{prop:Bertini}, $\mathscr{R}'\to \bD^{n-1}$ is a topologically trivial family such that  $\cR_w$  is the fiber over $w$.    Hence there exists a diffeomorphism $\phi_w: \mathcal R_w \rightarrow \mathcal R_0$ and 
	\[
	(\iota_{\mathcal R_w})_*= (\iota_{\mathcal R_0} )_* \circ (\phi_w)_*.
	\]  
	Thus, the Lipschitz constant of $u_w$ for $w\in \bD^{n-1}$ can be bounded uniformly. Thus, by shrinking $U$ if necessary, we may assume that the Lipschitz constant of $u$ along the disk $\bD_w$ for any $i \in \{1, \dots, n\}$ and $w \in \bD^{n-1}$ is uniformly bounded by a  constant $C$.
	Therefore, if  $z=(z_1,\dots, z_n), w=(w_1, \dots, w_n) \in U$,
	then
	\begin{align*}
		d(u(z), u(w)) & \leq   d(u(z_1,z_2, z_3, \dots, z_n), u(w_1, z_2, z_3, \dots, z_n))
		\\
		&   \ + d(u(w_1,z_2, z_3\dots, z_n), u(w_1, w_2, z_3 \dots, z_n))
	  \ \ + \dots   +
		\\
		&   d(u(w_1,w_2, w_3, \dots, w_{n-1}, z_n), u(w_1, w_2,  w_3, \dots, w_{n-1}, w_n))
		\\
		& \leq   C|z_1-w_1| + C|z_2-w_2| + \dots + C|z_n-w_n|.
	\end{align*}
	By Sedrakyan's inequality, $\frac{1}{n}\left( \sum_{i=1}^n |z_i-w_i| \right)^2 \leq \sum_{i=1}^n |z_i-w_i|^2$, and thus
	\[
	d^2 (u(z), u(w))  \leq C^2 n\left(   |z_1-w_1|^2 + .... + |z_n-w_n|^2 \right) = C^2 n |z-w|^2, \  \ \ \forall z,w \in U.
	\] 
	In other words,   $u$ is Lipschitz continuous  in $U$.  
	
		We now prove that $u$ is harmonic in  $U=\bD^n$ with respect to the Euclidean metric on $\bD^n$.
	For the proof, we will denumerate the $n$-number of disks  that make up $U$ and write
	\[
	U=\bD^n= \bD_1 \times \dots \times \bD_n.
	\]
	Here the notation is abusive and we emphasize that $\bD_i$ is not the disk in $\bC$ of radius $i$ as introduced in \Cref{sec:notation}.  
	Furthermore,
	we denote $\widehat{\bD_i}$ to be the product of $(n-1)$   disks obtained by removing the $i$-th disk from $ \bD_1 \times \dots \times \bD_n$; i.e.
	\[
	\widehat{\bD_i}:=\bD_1 \times \dots \times \bD_{i-1} \times \bD_{i+1} \times \dots \times \bD_n.
	\]
	Let ${\rm dvol}_0$ (resp.~$\widehat{{\rm dvol}_0}$) be the Euclidean volume  form of $\bD^n$ (resp.~$ \widehat{\bD_i}$).  
	We use the coordinate
	$$(z_1, \dots, z_n) \in \bD_1 \times \dots \times \bD_n \mbox{ and }
	z_i = x_i + \sqrt{-1} y_i \in \bD_i
	$$
	for $U$.  

	For any 	
	$w:=(z_2, \dots, z_n) \in \widehat{\bD_1}$,
	the restriction of $u$ to $\bD_1 \simeq \bD_1 \times \{w\}$, denoted as $u_w$,  is a harmonic map.  The energy density function  $|\nabla u_{w}|^2$ of $u_w$ is an $L^1$-function defined on $\bD_1 \simeq \bD_1 \times \{w\}$. 
	
	Following \cite[\S 1.9]{KS}, we have the identity
	\begin{eqnarray} 
		|\nabla u_w|^2& = & |u_*(\frac{\partial}{\partial x_1})|^2(\cdot,w)+ |u_*(\frac{\partial}{\partial y_1})|^2(\cdot,w)\label{ks1.9}
	\end{eqnarray}  
	as $L^1$ functions on $\bD_1 \simeq \bD_1 \times \{w\}$ for a.e.~$w \in \widehat{\bD_1}$.
	For the sake of completeness, we prove (\ref{ks1.9}) here:    For a fixed $(y_1,w)$,  let $I_{(y_1,w)}=\{x_1 \in \bR \mid  (x_1+\sqrt{-1}y_1,w) \in \bD^n\}$.  Following the notation of \cite[Theorem 1.9.6]{KS}, we denote   the energy density function of the 1-variable map $u|_{I_{(y_1,w)}}$ by  $|u_*(\frac{\partial}{\partial x_i})|^2$ and call it the $\frac{\partial}{\partial x_1}$-directional energy density function of $u$.  
	By
	\cite[Lemmas 1.9.1 \& 1.9.4]{KS}, 
	\[
	\lim_{\ep \rightarrow 0} \frac{d^2(u(x_1,y_1,w), u(x_1+\ep,y_1,w)}{\ep^2} =|u_*(\frac{\partial}{\partial x_i})|^2(z_1,w), 
	\ \ \mbox{ for a.e.}~x_1 \in I_{(y_1,w)}.
	\]
	Similarly, for a fixed $y_1$, let $I_{y_1}=\{x_1 \in \bR\mid x_1+\sqrt{-1}y_1 \in \bD_1\}$.  Following notation of \cite[Theorem 1.9.6]{KS}, we denote   the energy density function of the 1-variable map $u_w|_{I_{y_1}}$ by
	$|(u_w)_*(\frac{\partial}{\partial x_1})|^2$.  	By
	\cite[Lemmas 1.9.1 \& 1.9.4]{KS}, 
	\[
	\lim_{\ep \rightarrow 0} \frac{d^2(u_w(x_1,y_1), u_w(x_1+\ep,y_1)}{\ep^2} =|(u_w)_*(\frac{\partial}{\partial x_i})|^2(z_1), 
	\ \ \mbox{ for a.e.}~x \in I_{y_1}, \ \mbox{and a.e.}~w \in \bD_i.
	\]
	Since $u(x_1,y_1,w)=u_w(x_1,y_1)$ and $u(x_1+\ep,y_1,w)=u_w(x_1+\ep,y_1)$, we conclude that 
	\[
	|u_*(\frac{\partial}{\partial x_i}) |^2(z_1,w)=|(u_w)_*(\frac{\partial}{\partial x_i})|^2(z_1) \mbox{ as $L^1$-functions}  \mbox{ for a.e.}~w \in \widehat{\bD_1}.
	\] 
	Similarly, 
	\[
	|u_*(\frac{\partial}{\partial y_i})|^2(z_1,w)=|(u_w)_*(\frac{\partial}{\partial y_i})|^2(w) \mbox{ as $L^1$-functions}  \mbox{ for a.e.}~w \in \widehat{\bD_1}.
	\]
	By \cite[Theorem 2.3.2 (2.3vi)]{KS}, 
\begin{align*}
	 	|\nabla u_w|^2 = |  (u_w)_*(\frac{\partial}{\partial x})|^2+ |(u_w)_*(\frac{\partial}{\partial y})|^2.
\end{align*} 
	Thus, (\ref{ks1.9}) follows  from the above three identities.

	For notational simplicity, for each $i \in \{1, \ldots, n\}$, we will now denote
	\begin{align}\label{eq:notion}
		\left| \frac{\partial u}{\partial x_i} \right|^2 := |u_*(\frac{\partial}{\partial x_i})|^2, \quad \left| \frac{\partial u}{\partial y_i} \right|^2 := |u_*(\frac{\partial}{\partial y_i})|^2.
	\end{align}
	
	Let $v$ be the unique harmonic map in $U$ with boundary values equal to those of $u$. We have a similar identity to \eqref{ks1.9}. More precisely, for any $i \in \{1, \dots, n\}$ and $w \in \widehat{\bD_i}$, we have
	\begin{align*}
		|\nabla v_w|^2 = |u_*(\frac{\partial}{\partial x_1})|^2(\cdot, w) + |v_*(\frac{\partial}{\partial y_1})|^2(\cdot, w)
	\end{align*}
	as $L^1$ functions on $\bD_i \simeq \bD_i \times \{w\}$ for a.e.~$w \in \widehat{\bD_i}$. We shall use the same notation for $\nu$ as in \eqref{eq:notion}.

	Applying  the Fubini-Tonelli Theorem, we express $E^v[U]$ and $E^u[U]$ as a sum of $n$-terms as follows:
	\begin{eqnarray*}
		E^v[U] & = & \sum_{i=1}^n   \int_{\bD^n} \left| \frac{\partial v}{\partial x_i} \right|^2 + \left| \frac{\partial v}{\partial y_i} \right|^2{\rm dvol}_0 \\
		& = &  \sum_{i=1}^n \int_{\widehat{\bD_i}} \left( \int_{\bD_i}  \left| \frac{\partial v}{\partial x_i} \right|^2 +\left| \frac{\partial v}{\partial y_i} \right|^2 \frac{idz_i \wedge d\bar z_i}{2}\right)  \widehat{\rm dvol}_0
		\\
		& = &  \sum_{i=1}^n \int_{\widehat{\bD_i}} \left( \int_{\bD_i} |\nabla v_w|^2 \frac{idz_i \wedge d\bar z_i}{2}\right) \widehat{\rm dvol}_0,
	\end{eqnarray*}
	and 
	\begin{eqnarray*}
		E^u[U] & = &   \sum_{i=1}^n \int_{\bD^n} \left| \frac{\partial u}{\partial x_i} \right|^2 + \left| \frac{\partial u}{\partial y_i} \right|^2{\rm dvol}_0 \\
		& = &  \sum_{i=1}^n \int_{\widehat{\bD_i}} \left( \int_{\bD_i}  \left| \frac{\partial u}{\partial x_i} \right|^2 +\left| \frac{\partial u}{\partial y_i} \right|^2 \frac{idz_i \wedge d\bar z_i}{2}\right) \widehat{\rm dvol}_0
		\\
		& = &  \sum_{i=1}^n \int_{\widehat{\bD_i}} \left( \int_{\bD_i} |\nabla u_w|^2 \frac{idz_i \wedge d\bar z_i}{2}\right) \widehat{\rm dvol}_0.
	\end{eqnarray*}
	Assume $E^v[U] < E^u[U]$.  Then    there exists some $i \in \{1, \dots, n\}$ such that
	\[
	\int_{\widehat{\bD_i}} \left( \int_{\bD_i} |\nabla v_w|^2 \frac{idz_i \wedge d\bar z_i}{2}\right) \widehat{\rm dvol}_0 < \int_{\widehat{\bD_i}} \left( \int_{\bD_i} |\nabla u_w|^2 \frac{idz_i \wedge d\bar z_i}{2}\right) \widehat{\rm dvol}_0.
	\]
	Thus, we conclude that there exists a subset $Z$ of $\widehat{\bD_i}$ with positive Lebesgue measure such that for any
	\[
	w_0:=(c_1, \dots, c_{i-1}, c_{i+1}, \dots, c_n) \in Z,
	\]
we have
	\begin{eqnarray*}	
		\int_{\bD_i} |\nabla v_{w_0}|^2 \frac{idz \wedge d\bar z}{2} 
		<    \int_{\bD_i}  |\nabla u_{w_0}|^2 \frac{idz \wedge d\bar z}{2}.
	\end{eqnarray*}
	This contradicts that $u_{w_0}$ is a harmonic map.  Thus, $E^u[U] = E^v[U]$ and $u|_{U}=v$ is  harmonic with respect to the Euclidean metric on $\bD^n$.

	\medspace

\medspace

\noindent {\bf Step 3:} {\it  $u$ is pluriharmonic.} Since $u$ is locally  harmonic with respect to some Euclidean metric,  the set ${\mathcal S}(u)$ of singular points of $u$  is  a closed  subset of $X$ of Hausdorff codimension  by \cref{gs}.

 Let  $p \in X\backslash \cS(u)$  and $\mathcal{P} \subset T_p^{1,0}(X)$ be any complex 1-dimensional subspace. By \cref{item:tangent}, there exists  some  \((H_1,\dots, H_{n-1})\in T^{\circ}\) such that $p\in H_1\cap\ldots\cap H_{n-1}$ and $H_1\cap\ldots\cap H_{n-1}$ is tangent to $\mathcal{P}$.   Write $\overline{\cR}:=H_1\cap\ldots\cap H_{n-1}$ and $\cR:=\overline{\cR}\backslash \Sigma$.   By the construction of $u$, its restriction $u|_{\cR}$ is the  unique pluriharmonic section $u_{\cR}:\cR\to \widetilde{\cR}\times_{\varrho_{\cR}}\cC$ of logarithmic energy growth. Thus,  we have
 \[
 \partial \bar \partial|_\mathcal{P} u(p)=\partial \bar \partial u_\cR(p)=0.
 \]
Since $p$ is an arbitrary point of $X \backslash \mathcal S(u)$, this proves that $ \partial \bar \partial u=0$ over $X\backslash \cS(u)$.  By \cref{phequiv},  $u$  is  pluriharmonic of logarithmic energy growth with respect to $(\overline{X},L)$.

\medspace

\noindent {\bf Step 4:} {\it $u$ is harmonic with respect to any K\"ahler metric $\omega$ on $X$.} Since harmonicity is a local property, it is sufficient to prove this claim locally. Pick any $x_0 \in X$. Let $(U; z_1, \ldots, z_n)$ be the coordinate neighborhood of $x_0$ introduced in \Cref{coornbhd}. Since $u|_U$ is harmonic with respect to the Euclidean metric on $U$, the singular subset $\cS(u)$ has Hausdorff codimension at least two. Let $v: U \rightarrow \cC$ be the unique harmonic map in $U$ with respect to $\omega$ with boundary values equal to those of $u$. Since $u$ is pluriharmonic, the restriction $u|_{U \backslash \cS(u)}$ is harmonic with respect to the metric $\omega$. Thus, the function $d^2(u,v)$ is subharmonic when restricted to $\cR(u) \cap \cR(v)$. Since $d^2(u,v)$ is bounded and $\cS(u) \cup \cS(v)$ is a closed subset in $U$ with Hausdorff codimension at least two, $d^2(u,v)$ is weakly subharmonic. By the maximum principle, and the fact that $d^2(u,v) = 0$ on $\partial U$, it follows that $d^2(u,v) = 0$ on $U$. This proves $u = v$, meaning that $u$ is harmonic with respect to $\omega$.  

\medspace

\noindent {\bf Step 5:} {\it $u$ is unique.} Let $\widetilde{v}: \widetilde{X} \rightarrow \mathcal{C}$ be another $\varrho$-equivariant pluriharmonic map of logarithmic energy growth, and $v: X \rightarrow \widetilde{X} \times_\varrho \mathcal{C}$ be its corresponding section (cf.~\Cref{sec:ems}). For $q \in X$, let $s \in \bU(q)$. The restriction $v_{Y_s}$ of $v$ is a pluriharmonic section of logarithmic energy growth with respect to $(\overline{Y}_{\!\! s}, L|_{\overline{Y}_{\!\! s}})$. By the uniqueness assertion of the inductive hypothesis, we conclude that $u_{Y_s} = v_{Y_s}$. Since $q$ is an arbitrary point in $X$, we conclude that $u = v$. This proves the uniqueness of $u$. 
\end{proof}

\section{Energy estimate  for pluriharmonic maps into Euclidean buildings}\label{sec:estimate}
In this section we will complete the proof of \cref{GS}.
\subsection{Local energy  estimate at infinity}
 In this subsection we prove \cref{main:energy} (cf. \cref{logestimate}). Let $X$, $\overline{X}$, $L$, $\Sigma$ and $\varrho$ be as   in \cref{thm:harmonicmaps}.   Set $T:=|L|^{\times (n-1)}$ and let $T^\circ$ be the Zariski open subset of $T$ defined in \cref{prop:Bertini}.
\begin{lem} \label{goodcoord}
Any smooth point $x_0$ in the divisor $\Sigma$ has an admissible coordinate neighborhood $(U;z_1,\ldots,z_n)$ centered at $x_0$ with $U\cap \Sigma=(z_1=0)$  such that for  any $z_*=(z_2,\ldots,z_n)\in \mathbb D^{n-1}$, the transverse disk $z\mapsto (z,z_*)$ is contained   in some complete intersection $\overline{\cR}_{z_*}:=H_1 \cap \dots \cap   H_{n-1}$, where $(H_1,\ldots, H_{n-1})\in T^\circ$.
\end{lem}
\begin{proof}
Since $x_0\in \Sigma$ is a smooth point, by \cref{prop:Bertini}, we can choose  $s_2,\ldots,s_n\in H^0(\overline{X}, L)$ such that 
\begin{enumerate}[label=(\alph*)]
\item $({\overline{Y}}_{\! s_2},\ldots, {\overline{Y}}_{\!\!  s_n})\in T^\circ$. In particular,  the hypersurfaces ${\overline{Y}}_{\! s_2},\ldots, {\overline{Y}}_{\!\!  s_n}$ are smooth, where ${\overline{Y}}_{\!\! s_i}:=s_i^{-1}(0)$. 
\item  \label{item:transverse2}The divisor $\sum_{i=2}^{n}{\overline{Y}}_{\!\! s_i}+\Sigma$ is normal crossing.
\item   $x_0\in   {\overline{Y}}_{\!\! s_2}\cap \ldots\cap {\overline{Y}}_{\!\!  s_n}$. 
\end{enumerate}  
Pick some $s_1\in H^0(\overline{X}, L)$ such that $x_0 \notin(s_1=0)$. Let $u_i:=\frac{s_i}{s_1}$. Then for any $i\in \{2,\ldots,m\}$, $u_i$ is a rational function on $\overline{X}$ that is  regular on some neighborhood $U$ of $x_0$. After shrinking $U$ if necessary,  we can assume that  there is a holomorphic function $v\in \sO(U)$ such that $dv(x_0)\neq 0$ and  $\Sigma\cap U=(v=0)$ . By \Cref{item:transverse2}, one has  $dv\wedge du_2\wedge\ldots\wedge du_n(x_0)\neq 0$. After possibly shrinking $U$, we may assume that
\begin{enumerate}[label=(\arabic*)]
\item $dv\wedge du_2\wedge\ldots\wedge du_n(x)\neq 0$ for all $x\in U$;
\item The map \begin{align*}
	\varphi: U&\to \bC^n\\
	x&\mapsto (v(x), u_2(x),\ldots, u_n(x))
\end{align*}
is biholomorphic to its image $\varphi(U)=\mathbb D_\ep^n$.
\end{enumerate}
Thus, the map $\varphi$ defines an admissible coordinate neighborhood $(U;z_1,\ldots,z_n;\varphi)$ of $U$ centering at $x_0$.  For any $\zeta:=(\zeta_2,\ldots,\zeta_n)\in \mathbb D_\ep^{n-1}$, the transverse disk $$\bD_\zeta:=\{(z,\zeta_2,\ldots,\zeta_n)\in \mathbb D^n_{\ep}\mid |z|<\ep\}$$ is contained in $U\cap (u_2-\zeta_2=0)\cap \ldots \cap(u_n-\zeta_n)$. The later is contained in $(s_2-\zeta_2s_1=0)\cap\ldots\cap (s_n-\zeta_n s_1=0)$.   Since $T^\circ$ is Zariski open in $T$,  one can shrink $\ep$ such that 
$$
\left(	\overline{Y}_{\!\! s_2 - \zeta_2 s_0},\ldots,\overline{Y}_{\!\! s_{n} - \zeta_{n} s_0}\right)\in T^\circ 	$$ 
for each $\zeta\in\bD_\ep^{n-1}$.  The lemma follows after we compose $\varphi$ with the rescaling 
\begin{align*}
\mathbb D_\ep^n&\to \mathbb D^n\\
(z_1,\ldots,z_n)&\mapsto (\frac{z_1}{\ep},\ldots,\frac{z_n}{\ep}).
\end{align*}
\end{proof}


\begin{proposition} \label{logestimate}
Let $X$, $\overline{X}$, $L$, $\Sigma$ and $\varrho$ be as   in \cref{thm:harmonicmaps}. Let $\tilde{u} : \widetilde{X} \to \Delta(G)$ be the $\varrho$-equivariant pluriharmonic map with logarithmic energy growth with respect to $(\overline{X}, L)$ constructed in \cref{thm:harmonicmaps}, and let $u$ be its corresponding section. For any smooth point $x_0 \in \Sigma$ and an admissible coordinate neighborhood $(U; z_1, \ldots, z_n)$ centered at $x_0$, as constructed in \cref{goodcoord}, there exists a constant $C > 0$ such that 
\begin{equation} \label{mp2'}
\left|\frac{\partial u}{\partial z_j}(z_1, z_2, \dots, z_n) \right|^2\leq \Lambda^2 \ \mbox{ for any } (z_1,\ldots,z_n)  \in  \bD^*_{\frac{1}{2}}\times \bD^{n-1}_{\frac{1}{2}},  \ \ \forall j=2, \dots, n, \
\end{equation}
\begin{equation} \label{mp3'}
0  \leq \int_{\bD_{r,\frac{1}{2}}\times \bD^{n-1}_{\frac{1}{2}}} \left|\frac{\partial  {u}}{\partial z_1} (z_1, z_2, \dots, z_n)\right|^2 {\rm dvol}_\omega  - \frac{L^2_\gamma}{2\pi} \log \frac{1}{r} \cdot \mathsf{Vol}\left(\mathbb D_{\frac{1}{2}}^{n-1} \right)\leq C, \ \ \ \forall \ 0<r< \frac{1}{2},
\end{equation} 
\begin{align} \label{eq:energy}
- \frac{L^2_\gamma}{2\pi} \log r \cdot \mathsf{Vol}\left(\mathbb D_{\frac{1}{2}}^{n-1} \right)\leq  \int_{\bD_{r,\frac{1}{2}}\times \bD^{n-1}_{\frac{1}{2}}} |\nabla u|_\omega^2 {\rm dvol}_\omega \leq - \frac{L^2_\gamma}{2\pi} \log r  \cdot \mathsf{Vol}\left(\mathbb D_{\frac{1}{2}}^{n-1} \right)+C, \ \ \ \forall \ 0<r< \frac{1}{2}.
\end{align}
\begin{align} \label{eq:energy3}
	- \frac{L^2_\gamma}{2\pi} \log r \cdot \mathsf{Vol}\left(\mathbb D_{\frac{1}{2}}^{n-1} \right)\leq  \int_{\bD_{r,\frac{1}{2}}\times \bD^{n-1}_{\frac{1}{2}}}  |\nabla u|_{\omega_P}^2 d\vol_{\omega_P} \leq - \frac{L^2_\gamma}{2\pi} \log r  \cdot \mathsf{Vol}\left(\mathbb D_{\frac{1}{2}}^{n-1} \right)+C,   \ \ \ \forall \ 0<r< \frac{1}{2}.
\end{align}
 Here
 \begin{itemize}
 	\item  $\omega:=\sum_{i=1}^{n}\frac{\sqrt{-1}}{2}dz_i\wedge d\bar{z}_i$  (resp. $\omega_P$ ) is the standard Euclidean metric (resp. Poincar\'e-type metric defined in \eqref{eq: Poin})  on $U^*:=U\backslash \Sigma$,   $d\vol_\omega$  (resp. $d\vol_{\omega_P}$) is the  volume form of $\omega$ (resp. $\omega_P$) on $U^*$, and $\mathsf{Vol}\left(\mathbb D_{\frac{1}{2}}^{n-1} \right)$   is the Euclidean volume of $\mathbb D_{\frac{1}{2}}^{n-1}$. 
 	\item  $\gamma \in \pi_1(X)$ is  the element corresponding to the loop  $\theta \mapsto (\frac{1}{2} e^{\sqrt{-1} \theta}, 0,\ldots,0)$ around the irreducible component  $\Sigma$ containing $x_0$;
 	\item    $L_\gamma$ is the translation length of $\varrho(\gamma)$ defined in \Cref{def:translation}.  
 \end{itemize}   
Moreover, the above energy  $\int_{\bD_{r,\frac{1}{2}}\times \bD^{n-1}_{\frac{1}{2}}} |\nabla u|^2 d\vol_{\omega}$ is \emph{finite}  provided that $\varrho(\gamma)\in G(K)$ is quasi-unipotent.
\end{proposition}

\begin{proof}
  In \cref{thm:harmonicmaps}, we prove that  $\tilde{u}$ is   harmonic with respect to any choice of a  K\"ahler metric on $\widetilde{X}$. 
By \cref{thm:Lcon},  $\tilde{u}$ is locally Lipschitz continuous with respect to the distance function on  $\widetilde{X}$ induced by the metric $\omega$.  
  Let $\Lambda>0$ be the Lipschitz constant of $\tilde{u}$ in $\pi_X^{-1}(\db\bD_{\frac{1}{2}}\times \overline{\bD_{\frac{1}{2}}}\times\cdots\times \overline{\bD_{\frac{1}{2}}})$.

Fix $z_*:=(z_{2*}, \dots,z_{n*})$, $w_*:=(w_{2*}, \dots,w_{n*}) \in \mathbb D_{\frac{1}{2}}^{n-1}$.   Then
\[
\delta^2_{z_*,w_*}(z):=d^2(\tilde{u}(z,z_*),  \tilde{u} (z,w_*))\leq \Lambda^2 |z_*-w_*|^2 \mbox{ \ for \  } |z|=\frac{1}{2}.
\] 
Let $\overline{\mathcal R}_{z_*}$ and $\overline{\mathcal R}_{w_*}$ be the  complete intersection curves in   Lemma~\ref{goodcoord}.  Denote $\cR_{z_*}:=\overline{\mathcal R}_{z_*}\cap X$ and $\cR_{w_*}:=\overline{\mathcal R}_{w_*}\cap X$.   Let $u_{ {\mathcal R}_{z_*}}$ and $u_{ {\mathcal R}_{w_*}}$  be induced maps as in (\ref{restrictionsection}) of the compositions of  $u$ and the inclusion maps $\cR_{z_*} \hookrightarrow X$ and $\cR_{z_*} \hookrightarrow X$ respectively.  Let   $\tilde{u}_{{\mathcal R}_{z_*}}$ and $\tilde{u}_{{\mathcal R}_{w_*}}$ be the corresponding equivariant maps from the universal covers  to $\Delta(G)$ as in   \eqref{restrictionsection}.
 By the construction of $\tilde{u}$ in \cref{thm:harmonicmaps},     $\tilde{u}_{ {\mathcal R}_{z_*}}$ and $\tilde{u}_{ {\mathcal R}_{w_*}}$ are harmonic maps of  logarithmic growth. Hence the function $\delta^2_{z_*,w_*}(z) =d^2(u(z_1,z_*), u(z_1,w_*))$ is a continuout subharmonic function satisfying 
\[
\lim_{|z| \rightarrow 0} \delta^2_{z_*,w_*}(z) +\ep \log |z| =-\infty.
\]
Thus, an argument used to prove  (\ref{maxprin}) also proves  \begin{equation} \label{maxprin2}
	\delta^2_{z_*,w_*}(z) \ \leq  \Lambda^2 |z_*-w_*|^2  \quad \forall z \in \bD^*_\frac{1}{2}. 
\end{equation}
It yields \eqref{mp2'}.

By \cref{thm:harmonicmaps}, $\tilde{u}$  has logarithmic energy growth with respect to $(\overline{X},L)$. By  \cref{def:log energy},   for  any fixed  $z_* \in \mathbb D_{\frac{1}{2}}^{n-1}$,  there exists a constant $C>0$ such that  we have
 \begin{equation} \label{ondisks}
- \frac{L^2_\gamma}{2\pi} \log r \leq E^{\tilde{u}_{ {\mathcal R}_{z_*}}}[\mathbb D_{r,\frac{1}{2}}] \leq  -\frac{L^2_\gamma}{2\pi} \log r+C
\end{equation}
for any $r\in (0,\frac{1}{2})$. 
Such constant  $C$ in (\ref{ondisks}) depends only on $L_\gamma$ and the Lipschitz estimate of $\tilde{u}_{ {\mathcal R}_{z_*}}$ on $\partial \mathbb D_{\frac{1}{2}}$.  Thus, $C$  is uniform for any $z_* \in \mathbb D_{\frac{1}{2}}^{n-1}$. 
Integrating (\ref{ondisks})
over $z_* \in \mathbb D_{\frac{1}{2}}^{n-1}$ while noting
\begin{align}\label{eq:energyuniform}
E^{\tilde{u}_{ {\mathcal R}_{z_*}}}[\mathbb D_{r,\frac{1}{2}}] = \int_{\mathbb D_{r,\frac{1}{2}} } \left|\frac{\partial u}{\partial z_1}\right|^2 (z,z_*) \  \frac{\sqrt{-1}dz \wedge d\bar z}{2},  
\end{align}
we conclude (\ref{mp3'}).

Since
\begin{eqnarray*}
\int_{\bD_{r,\frac{1}{2}}\times \bD^{n-1}_{\frac{1}{2}}}|\nabla u|^2  d\vol_{\omega} & = &  \int_{\bD_{r,\frac{1}{2}}\times \bD^{n-1}_{\frac{1}{2}}}\left(   \left|\frac{\partial u}{\partial z_1}\right|^2 + \sum_{j=2}^n  \left|\frac{\partial u}{\partial z_j}\right|^2
\right) d\vol_{\omega},
\end{eqnarray*}
the  assertion \eqref{eq:energy} follows from (\ref{mp2'}) and (\ref{mp3'}).  

Consider the Poincar\'e-type metric
\begin{align*} 
	\omega_P=\frac{\sqrt{-1}dz_1\wedge d\overline{z}_1}{|z_1|^2(\log |z_1|^2)^2}+\sum_{k=2}^{n} \sqrt{-1}dz_k\wedge d\overline{z}_k.
\end{align*} 
Denote by  $(P_{i\bar j})$ and $(P^{i\bar j})$ the components of this metric tensor  and its inverse. 
Note that
\begin{eqnarray*}
\int_{\bD_{r,\frac{1}{2}}\times \bD^{n-1}_{\frac{1}{2}}} |\nabla u|^2_{\omega_P} d\vol_{\omega_P} & = & \int_{\bD_{r,\frac{1}{2}}\times \bD^{n-1}_{\frac{1}{2}}}\left( P^{1\bar 1} \left|\frac{\partial u}{\partial z_1}\right|^2 + \sum_{j=2}^n P^{j\bar j}\left|\frac{\partial u}{\partial z_j}\right|^2
	\right) d\vol_{\omega_P}
	\\
	& = &  \int_{\bD_{r,\frac{1}{2}}\times \bD^{n-1}_{\frac{1}{2}}}\left( \left|\frac{\partial u}{\partial z_1}\right|^2  + \frac{1}{|z_1|^2(\log |z_1|^2)^2} \sum_{j=2}^n \left|\frac{\partial u}{\partial z_j}\right|^2
	\right) d\vol_{\omega_0}.
\end{eqnarray*}
Then \eqref{eq:energy3} follows from (\ref{mp2'}) and (\ref{mp3'}).

To prove the last claim, it then suffices to show that $L_\gamma=0$. Since the finiteness of local energy is preserved under finite unramified  covers, we can assume that $\varrho(\gamma)$ is unipotent.  Then there exists a Borel subgroup $B$  of $G$  such that $\varrho(\gamma)\in U(K)$, where $U$ is the unipotent radical of $B$.  Note that $U(K)$   fixes a sector-germ of the standard apartment $A$, which means that there exists a Weyl chamber $C^v$ of the apartment $A$ such that if $u$ in $U(K)$, then $u$ fixes $x+C^v$, for some $x$ in $A$. In particular, $\varrho(\gamma)$ fixes a  point  $y\in A$.  Consider the minimal closed convex $\varrho(\pi_1(X))$-invariant subset $\cC\subset \Delta(G)$ constructed in \cref{lem:ptinfty}.  By \cref{lem:proj}, the closest point projection map $\Pi: \Delta(G) \rightarrow \mathcal{C}$ is a  $G$-equivariant map, which implies that  $\varrho(\gamma)\Pi(y)=\Pi(\varrho(\gamma)y)=\Pi(y)$. By \eqref{eq:translation}, this implies that $L_\gamma=0$. The proposition is proved. 
\end{proof}

\subsection{Logarithmic energy growth (II)} \label{logenergy(II)}
In this subsection we  complete the proof of \cref{GS}. 
We shall give a more intrinsic definition of logarithmic energy growth than \cref{logenenergy} (cf. \cref{def:log energy}).  
 \begin{lem} \label{lem:generic}
 	Let $(\overline{X},\Sigma)$ be a log smooth pair, $L$ be a line bundle on $\overline{X}$. Assume that $V\subset |L|$ is a linear system which is base-point-free.  Then a generic hypersurface   $H$ in $V$ is smooth and $H+\Sigma$ is also simple normal crossing.   
 \end{lem}
 \begin{proof}
 	We write $\Sigma=\sum_{i=1}^{m}\Sigma_i$ into sum of irreducible components. For   $I\subset \{1,\ldots,m\}$, denote by  $\Sigma_I:= \bigcap_{i_k \in I} \Sigma_{i_k}$ which is a closed smooth subvariety of $\overline{X}$.    Then by the Bertini theorem, for each $I$ with $\dim \Sigma_I\geqslant 1$, there is a Zariski open set $V_I$ of $V$ such that every hypersurface $H\in V_I$ satisfies that $H$ and  $H\cap \Sigma_I$ are both smooth. Denote by $V':=\bigcap_{I}V_I$ where $I$ ranges over all subsets of $ \{1,\ldots,m\}$ such that $\dim \Sigma_I\geqslant 1$. Then $V'$ is a Zariski dense open set of $V$. It follows that every hypersurface $H\in V'$ is  smooth   and $H\cap \Sigma_I$ is  smooth for each   $\Sigma_I$  with $\dim \Sigma_I\geqslant 1$. This implies that $H\cup \Sigma$ is also simple normal crossing.  
 \end{proof}
\begin{lem} \label{thm:energy estimate}
Let $X$, $\overline{X}$, $L$, $\Sigma$ and $\varrho$ be as   in \cref{thm:harmonicmaps}. Let $\tilde{u} : \widetilde{X} \to \Delta(G)$ be the $\varrho$-equivariant pluriharmonic map with logarithmic energy growth with respect to $(\overline{X}, L)$ constructed in \cref{thm:harmonicmaps}, and let $u$ be its corresponding section. Choose any smooth point $x_0 \in \Sigma$. Let $(U;w_1,\ldots,w_n)$ be \emph{any} admissible coordinate neighborhood centered at $p$ such that $U\cap \Sigma=(w_1=0)$. Then     there exists a positive constant $C$ such that  for any $0<r<\frac{1}{2}$, and any $w_*:=(w_2,\ldots,w_n)\in \bD_{\frac{1}{2}}^{n-1}$,  one has
	\begin{equation} \label{Cbddef3} 
		0 \leq \int \limits_{{\mathbb D}_{r,\frac{1}{2}}} 
		\left| \frac{\partial u}{\partial w_1}(w_1,w_*) \right|^2 \frac{idw_1 \wedge d\bar w_1}{2}
		- \frac{L^2_\gamma}{2\pi}  \log \frac{1}{r}  \leq C.
	\end{equation}
	Here  $L_\gamma$ is the translation length of $\varrho(\gamma)$ with $\gamma \in \pi_1(X)$ corresponding to the loop $\theta \mapsto (\frac{1}{2} e^{i\theta},0,\ldots,0)$.  
\end{lem}

\begin{proof}
By Lemma~\ref{goodcoord}, we can choose  an admissible coordinate neighborhood $(V; z_1, \dots, z_n)$ centered at $p$ satisfying the properties therein, such that  
  $z_1=w_1$.  After shrinking $U$ if necessary, we may assume that  there is a constant $C>0$ such that  for any $j\in \{2,\ldots,n\}$, we have
  $$
  |  \frac{\partial z_j}{\partial w_1}(w_1,w_*)|\leq C
  $$ 
  for any $(w_1,w_*)\in U$.
  Then  by   \eqref{mp2'} and
  \begin{align*}
\frac{\partial u}{\partial w_1}(w_1,w_*) =\frac{\partial u}{\partial z_1}(z_1,z_*) \frac{\partial z_1}{\partial w_1}+\sum_{j=2}^{n}\frac{\partial u}{\partial z_j}(z_1,z_*) \frac{\partial z_j}{\partial w_1}=\frac{\partial u}{\partial z_1}(z_1,z_*)  +\sum_{j=2}^{n}\frac{\partial u}{\partial z_j}(z_1,z_*) \frac{\partial z_j}{\partial w_1},
  \end{align*}
     there is a constant $C_2>0$ such that   
   $$
   |\frac{\partial u}{\partial w_1}(w_1,w_*)|\leq |\frac{\partial u}{\partial z_1}(z_1,z_*) |+C,
   $$
     for any $(w_1,w_*)\in U$.
  Thus, \eqref{Cbddef3} follows from the same argument used in the proof of \cref{lem:ks}~(ii), replacing (\ref{fromjoel}) and (\ref{esjoel1}) with (\ref{mp2'}) and (\ref{mp3'}). We leave the details to the reader. 
\end{proof}

\begin{lem}\label{prop:unicity}
Let $X$, $\overline{X}$, $L$, $\Sigma$, and $\varrho$ be as in \cref{thm:harmonicmaps}. Let $\tilde{u} : \widetilde{X} \to \Delta(G)$ be the $\varrho$-equivariant pluriharmonic map with logarithmic energy growth with respect to $(\overline{X}, L)$ constructed in \cref{thm:harmonicmaps}. Assume that $\mu : \overline{X}_1 \to \overline{X}$ is a birational morphism such that $\mu|_{\mu^{-1}(X)} : \mu^{-1}(X) \to X$ is an isomorphism and $\Sigma_1 := \overline{X}_1 \backslash \mu^{-1}(X)$ is also a simple normal crossing divisor. If $L_1$ is a  sufficiently ample line bundle on $\overline{X}_1$, then $\tilde{u}$ also has logarithmic energy growth with respect to $(\overline{X}_1, L_1)$. 
\end{lem}
\begin{proof} 
	Consider the linear system $|\mu^*L|$ on $\overline{X}_1$. It is a  free linear system as $L$ is very ample.  Note that $$H^0(\overline{X}_1, \mu^*L)=H^0(\overline{X}, \mu_*(\mu^*L))=H^0(\overline{X}, L\otimes \mu_*(\sO_{\overline{X}_1}))=H^0(\overline{X},L),$$
	where the second equality is due to projection formula and the last equality follows from Zariski's main theorem  $\mu_*\sO_{\overline{X}_1}=\sO_{\overline{X}}$. It follows that
	\begin{align}\label{eq:isomorphism}
		\mu^*:H^0(\overline{X},L)\to H^0(\overline{X}_1, \mu^*L)   
	\end{align}
	is an isomorphism.  
	
Denote $T:=|L|^{\times(n-1)}$ and let $T^\circ$ be the Zariski open subset of $T$ constructed in \cref{prop:Bertini}.  Similarly, we define $T_1:=|\mu^*L|^{\times(n-1)}$  and let $T_1^\circ$ be the Zariski open subset of $T$  such that, 
for every  \((H_1,\dots, H_{n-1})\in T_1^{\circ}\), the hypersurfaces \(H_1,\dots, H_{n-1}\)  are smooth, and the divisor \(H_1+\dots+ H_{n-1}+\Sigma_1\)  is simple normal crossing.  By   \cref{lem:generic},  one can show that $T_1^\circ$ is a non-empty Zariski open subset of $T_1$. The isomorphism \eqref{eq:isomorphism} induces an isomorphism $i:T_1\to T$. Denote $T^{\circ\circ}:=T^\circ\cap i(T_1^\circ)$. It is a non-empty Zariski open subset of $T$. Moreover, by \cref{lem:generic} along with the same arguments in the proof of \cref{prop:Bertini}, for any $x_0\in X$, there exists \((H_1,\dots, H_{n-1})\in T^{\circ\circ}\) such that 
$$
x_0\in \overline{\cR}:=H_1\cap\cdots\cap H_{n-1}. 
$$
Denote $\cR:=\overline{\cR}\backslash \Sigma$. By \cref{thm:harmonicmaps},  $\tilde{u}_{\cR}:\widetilde{\cR}\to \cC$  is a $\varrho_{\cR}$-equivariant harmonic map with  logarithmic energy growth.   
	
	 By our construction of $T^{\circ\circ}$, it follows that  $\mu^*H_1,\ldots,\mu^*H_{n-1}$ are all smooth, and $\sum_{j=1}^{n-1}\mu^*H_j+\Sigma_1$ is simple normal crossing.  Thus, $\overline{\cR}_1:=\mu^*H_1\cap\cdots\cap \mu^*H_{n-1} $ is a smooth projective curve in $\overline{X}_1$.  Denote  $\cR_1:=\overline{\cR}_1\backslash \Sigma_1$. Then $\mu|_{\cR_1}:\cR_1\to \cR$ is an isomorphism.  
	
	We apply \cref{thm:harmonicmaps} again to construct another $\varrho$-equivariant  harmonic map $ \tilde{v}: \widetilde{X} \rightarrow \mathcal C$ of    logarithmic energy growth with respect to $(\overline{X}_1,L_1)$.  By the same proof of \cref{goodcoord}, there exists  an admissible coordinate neighborhood $(U;z_1,\ldots,z_n)$ centered at $x_0$ with $U\cap \Sigma_1=(z_1=0)$  such that  the transverse disk $z\mapsto (z,0,\ldots,0)$ is contained   in $\overline{\cR}_1$.  
	  It follows from    \cref{thm:energy estimate} that $\tilde{v}_{\cR}:\widetilde{\cR}\to \cC$  is a $\varrho_{\cR}$-equivariant harmonic map with  logarithmic energy growth.    By \cref{thm:Lefschetz}, we know  that  $\pi_1(\cR)\to \pi_1(X)$ is surjective. Therefore, $\varrho_\cR:\pi_1(\cR)\to G(K)$ also fixes $\cC$ and does not fix a point at infinity of $\cC$.      
	By the unicity property in \cref{lem:rs}, we conclude that ${u}_{\cR}= {v}_{\cR}$ where ${u}_{\cR}$ and ${v}_{\cR}$ are defined in \eqref{restrictionsection}.  Since $x_0$ is an arbitrary point in $X$,  it follows that    $u=v$ holds over the whole $X$. The lemma is proved. 	
\end{proof}

\begin{proposition}\label{thm:unicity}
%
Let $\overline{X}_1$ and $\overline{X}_2$ be two smooth projective compactifications of $X$ with $\Sigma_i:=\overline{X}_i\backslash X$ a simple normal crossing divisor. Let $L_1$ and $L_2$ be sufficiently ample line bundles on $\overline{X}_1$ and $\overline{X}_2$ respectively.    For $i=1,2$, let  $\tilde{u}_i: \widetilde{X} \rightarrow \mathcal C$  be the  unique $\varrho$-equivariant harmonic map of  logarithmic energy growth with respect to $(\overline{X}_i,L_i)$ constructed in \cref{thm:harmonicmaps}. Then $\tilde{u}_1=\tilde{u}_2$. 
\end{proposition}
\begin{proof}
	Since $\overline{X}_1$ is birational to $\overline{X}_2$,  	we can blow-up  the indeterminacy of the birational map $\overline{X}_1\dashrightarrow \overline{X}_2$  to obtain a birational morphism $\overline{X}_3\to \overline{X}_1$ such that we have
	\begin{equation*}
		\begin{tikzcd}
			& \overline{X}_3 \arrow[dr,"\mu_2"]\arrow[dl,"\mu_1"]&\\
			\overline{X}_1 \arrow[rr, dashed]&&\overline{X}_2
		\end{tikzcd} 
	\end{equation*}     
	Here $\mu_1$ and $\mu_2$ are both isomorphic over $X$.   We may assume that $\Sigma_3=\overline{X}_3\backslash X$ is also a simple normal crossing divisor. Fix a sufficiently ample line bundle $L_3$ on $\overline{X}_3$. By \cref{thm:harmonicmaps},  there is a unique $\varrho$-equivariant pluriharmonic map $\tilde{u}_3: \widetilde{X} \rightarrow \mathcal C$ of  logarithmic energy growth with respect to $(\overline{X}_3,L_3)$. Then by \cref{prop:unicity}, $\tilde{u}_1=\tilde{u}_3=\tilde{u}_2$. The proposition is proved. 
\end{proof} 

\Cref{prop:unicity} enables us to obtain the following energy estimate for the harmonic map.
\begin{proposition}[local energy estimate at each point]\label{thm:local estimate}
Let $X$, $\overline{X}$, $L$, $\Sigma$ and $\varrho$ be as   in \cref{thm:harmonicmaps}. Let $\tilde{u} : \widetilde{X} \to \Delta(G)$ be the $\varrho$-equivariant pluriharmonic map with logarithmic energy growth with respect to $(\overline{X}, L)$ constructed in \cref{thm:harmonicmaps}, and $u$ be its corresponding section.  For any holomorphic map $f:\bD\to \overline{X}$ such that $f^{-1}(\Sigma)\subset\{0\}$, we denote by $u_f:\bD^*\to \bD\times_{f^*\varrho}\cC$ the induced harmonic section of $u\circ f$ defined in (\ref{restrictionsection}) and  let $  \widetilde{u_f}:\widetilde{\bD^*}\to \cC$ be  the corresponding $f^* \varrho$-equivariant harmonic map of $u_f$. Then   there is a positive constant $C$ such that   for any $0<r_1<r_2<\frac{1}{2}$, one has	
	\begin{equation} \label{ondisks2}
		\frac{L^2_\gamma}{2\pi} \log \frac{r_2}{r_1} \leq E^{\widetilde{u_{f}}}[\mathbb D_{r_1,r_2}] \leq   \frac{L^2_\gamma}{2\pi} \log \frac{r_2}{r_1} +C,
	\end{equation} 
	where $L_\gamma$ is the translation   length of $\varrho(\gamma)$ with $\gamma \in \pi_1(X)$ corresponding to the loop $\theta\mapsto f(\frac{1}{2}e^{i\theta})$.  
\end{proposition}
\begin{proof}
	We can shrink $\bD$ such that $f|_{\bD^*}:\bD^*\to X$ is an embedding.   We can take an embedded desingularization for the image $C:=f(\bD)$   to obtain a birational morphism $\mu:\overline{X}_1\to \overline{X}$ such that 
	\begin{enumerate}[label*=\rm (\alph*)]
		\item $\mu^{-1}(\Sigma)=\Sigma_1$  is a simple normal crossing divisor. 
		\item \label{item:blow} $\mu$ is an isomorphism over $X$. 
		\item The strict transform $C_1$ of $C$ is smooth, and intersects with $\Sigma_1$    transversely.     In particular, $x_0:=C_1\cap \Sigma_1$ is a smooth point of $\Sigma_1$. 
	\end{enumerate} 
	 Thus, we can take an admissible coordinate neighborhood $(U;z_1,\ldots,z_n)$ centered at $x_0$ such that $U\cap \Sigma_1=(z_1=0)$ and $C_1=(z_2=\cdots=z_n=0)$.  Let $f_1:\bD\to \overline{X}_1$ be the lift of $f$. Then we can reparametrize $\bD$ such that $f_1(z)=(z^k,0,\ldots,0)$. 
	 
	        By \eqref{Cbddef3}, there exists a positive constant $C$ such that for any $0<r_1<r_2<\frac{1}{2}$,  one has	\begin{equation} \label{Cbddef4} 
		0 \leq \int \limits_{{\mathbb D}_{r_1,r_2}} 
		\left| \frac{\partial \tilde{u}}{\partial z_1}(z_1,0,\ldots,0) \right|^2 \frac{\sqrt{-1}dz_1 \wedge d\bar z_1}{2}
		- \frac{L^2_{\gamma_0}}{2\pi}  \log \frac{r_2}{r_1}  \leq C.
	\end{equation}
	Here  $L_{\gamma_0}$ is the translation length of $\varrho(\gamma_0)$ with $\gamma_0 \in \pi_1(X)$ corresponding to the loop $\theta\mapsto (re^{i\theta},0,\ldots,0)$.  Since 
	$$
\left|\frac{d\widetilde{u_{f_1}}}{dz}(z)\right|^2=\left|kz^{k-1}\frac{\partial \tilde{u}}{\partial z_1}(z^k,0,\ldots,0)\right|^2, 
	$$
	 then  for any $0<r_1<r_2<\frac{1}{2}$, one has
	$$
 E^{\widetilde{u_{f_1}}}[\mathbb D_{r_1,r_2}]=\int \limits_{{\mathbb D}_{r_1,r_2}}\left|	\frac{d\widetilde{u_{f_1}}}{dz}(z)\right|^2\frac{\sqrt{-1}dz \wedge d\bar z}{2} =	 k\int \limits_{{\mathbb D}_{r_1^k,r_2^k}} 
	\left| \frac{\partial \tilde{u}}{\partial z_1}(z_1,0,\ldots,0) \right|^2 \frac{\sqrt{-1}dz_1 \wedge d\bar z_1}{2}.
	$$
	Let $u_{f_1}:\bD^*\to \bD^*\times_{f_1^*\varrho}\cC$ be the induced section of $u\circ f_1$ defined in \eqref{restrictionsection}. By \Cref{item:blow}, we have $u_{f_1}=u_f$. The above equality implies 
	\begin{equation} \label{Cbddef5}
		k^2\frac{L_{\gamma_0}^2}{2\pi} \log \frac{r_2}{r_1} \leq  E^{\widetilde{u_{f}}}[\mathbb D_{r_1,r_2}] \leq k^2\frac{L_{\gamma_0}^2}{2\pi} \log \frac{r_2}{r_1}+Ck^2.
	\end{equation}
	for any $0<r_1<r_2<\frac{1}{2}$. 
	For the loop $\gamma\in \pi_1(X)$ defined by  $\theta\mapsto f_1(\frac{1}{2}e^{i\theta})$, the translation length $L_\gamma$ of $\varrho(\gamma)$ is equal to $kL_{\gamma_0}$.  \eqref{Cbddef5} implies  \eqref{ondisks2}. The theorem is proved. 
\end{proof}
By \cref{thm:local estimate}, we can revise  \cref{logenenergy} as follows. 
\begin{dfn}[logarithmic energy growth (II)]\label{def:log energy}
	Let $X$ be a smooth quasi-projective variety, $G$ be a semi-simple algebraic group over a non-archimedean local field $K$,  and  let   $\varrho: \pi_1(X) \rightarrow G(K)$  be a Zariski dense representation.  A $\varrho$-equivariant  harmonic map $ \tilde{u}: \widetilde{X} \rightarrow \Delta(G)$ has \emph{logarithmic energy growth} if for any holomorphic map $f:\bD^*\to X$ with no essential singularity at the origin (i.e. for some,   thus any, smooth projective compactification $\overline{X}$ of $X$, $f$ extends to a holomorphic map $\bar{f}:\bD\to \overline{X}$),  there is a positive constant $C$ such that  for any $r\in (0, \frac{1}{2})$,  one has
	\begin{equation} \label{ondisks3}
		- \frac{L^2_\gamma}{2\pi} \log {r} \leq E^{ u_f}[\mathbb D_{r,\frac{1}{2}}] \leq  -\frac{L^2_\gamma}{2\pi} \log r+C,
	\end{equation} 
	where $L_\gamma$ is the translation   length of $\varrho(\gamma)$ with $\gamma \in \pi_1(X)$ corresponding to the loop $\theta\mapsto f(\frac{1}{2}e^{i\theta})$.  
\end{dfn}
In summary, we have the following result,  which proves the second assertion in   \cref{main:existence} and \cref{main:functorial}.
\begin{thm}\label{thm:functorial}
 The pluriharmonic map $\tilde{u}$ constructed in \cref{thm:harmonicmaps} has logarithmic energy growth in the sense of \cref{def:log energy}.  Moreover, if $f:Y\to X$ is a morphism from another smooth quasi-projective variety $Y$, then for the section $u_f:Y\to \widetilde{Y}\times_{f^*\varrho}\, \cC$ defined in \eqref{restrictionsection}, the corresponding map $\widetilde{u_f}$ is a $f^*\varrho$-equivariant pluriharmonic map of logarithmic energy growth.  Moreover, $\widetilde{u_f}$ is harmonic with respect to any K\"ahler metric compatible with the complex structure of $X$.
\end{thm} 
\begin{proof}	The first assertion follows from \cref{thm:local estimate}. The fact that $u_f$ is pluriharmonic can be deduced from the definition of pluriharmonic. Furthermore, consider any holomorphic map $g:\bD^* \to Y$ with no essential singularity at the origin. Then $f\circ g:\bD^* \to X$ has no essential singularity at the origin.

Denote by $L_\gamma$ is the translation   length of $f^*\varrho(\gamma)$ with $\gamma \in \pi_1(Y)$ corresponding to the loop $\theta\mapsto g(\frac{1}{2}e^{i\theta})$.  Then $L_\gamma$  is the translation   length of $\varrho(\gamma')$ with $\gamma' \in \pi_1(X)$ corresponding to the loop $\theta\mapsto f\circ g(\frac{1}{2}e^{i\theta})$.    By \eqref{ondisks2} there is a positive constant $C$ such that  for any $r\in (0,\frac{1}{2})$, one has
\begin{equation*} 
	- \frac{L^2_\gamma}{2\pi} \log {r} \leq E^{  u_{f\circ g}}[\mathbb D_{r,\frac{1}{2}}] \leq  -\frac{L^2_\gamma}{2\pi} \log r+C.
\end{equation*} 
The harmonicity of $u_f$ with respect to any K\"ahler metric $\omega$ can be established using the same method  in Step 4 of the proof of Theorem~\ref{thm:harmonicmaps}.
\end{proof}

\section{Pluriharmonic maps and logarithmic symmetric differentials} 
Let $X$ be a smooth quasi-projective variety and  and let $G$ be a semisimple algebraic group over  a non-archimedean local field $K$. Assume that $\varrho: \pi_1(X)\to G(K)$ is  a Zariski dense representation.  By \cref{GS},  there is a $\varrho$-equivariant pluriharmonic map $\tilde{u}:\widetilde{X}\to \Delta(G)$, that is locally Lipschitz and has logarithmic energy growth.  In this section we will construct logarithmic symmetric  differentials on $X$ using this pluriharmonic map $u$.  The construction we presented here is close to that in \cite{Kli13} (cf.  \cite{Eys04,Kat97,Zuo96} for other slightly different construction). 

\subsection{Finite \'etale cover and logarithmic symmetric differential} 
	\begin{dfn}[Galois morphism]
		A covering map $\gamma: X \rightarrow Y$ of varieties is called \emph{Galois  with group $G$} if there exists a finite group $G \subset \operatorname{Aut}(X)$ such that $\gamma$ is isomorphic to the quotient map.
	\end{dfn} 
	\begin{lem}\label{descends}
		Let $\bar{f} : (\overline{X}, \Sigma_X) \to (\overline{Y}, \Sigma_Y)$ be a surjective morphism between two log smooth pairs of dimension $n$. Assume that the restriction of $\bar{f}$ to $X$ is étale and Galois, with Galois group $G$. If $H^0(\overline{X}, {\rm Sym}^k \Omega_{\overline{X}}(\log \Sigma_X)) \neq 0$ for some positive integer $k$, then $H^0(\overline{Y}, {\rm Sym}^m \Omega_{\overline{Y}}(\log \Sigma_Y)) \neq 0$ for some positive integer $m$.
	\end{lem}
	\begin{proof}
		Let $\overline{X} \stackrel{\mu}{\to} \overline{X}_1 \stackrel{\bar{f}_1}{\to} \overline{Y}$ be the Stein factorization of $\bar{f}$. Then $\mu$ is a birational morphism onto a projective normal variety $\overline{X}_1$, and the restriction of $\mu$ over $X$ is an isomorphism. We will identify $X_1 := \mu(X)$ with $X$ abusively. By Zariski’s Main Theorem in the equivariant setting (cf.~\cite[Theorem 3.8]{GKP13}), $\bar{f}_1$ is Galois with group $G$. 
		Denote by $\Sigma_Y^{\rm sing}$ the singular locus of $\Sigma_Y$, which is a closed subset  of $\overline{Y}$ of codimension at least two. Let $\overline{Y}^\circ := \overline{Y} \backslash \Sigma_Y^{\rm sing}$ and $\overline{X}_1^\circ := \bar{f}_1^{-1}(Y^\circ)$. Then $\overline{X}_1^\circ$ is smooth, and $\Sigma_{X_1}^\circ := \overline{X}_1^\circ \backslash X_1$ is a smooth divisor in $\overline{X}_1^\circ$. Moreover, it follows from the proof of \cite[Lemma A.12]{Den22} that at any $x \in \Sigma_{X_1}^\circ$, there are admissible coordinate neighborhoods $(\Omega_x; x_1, \ldots, x_n)$ centered at $x$, with $\Sigma^\circ_{X_1} \cap \Omega_x = (x_1 = 0)$, and an admissible coordinate neighborhood $(\Omega_y; y_1, \ldots, y_n)$ centered at $\bar{f}_1(x)$, with $\Sigma_Y \cap \Omega_y = (y_1 = 0)$, such that
		\begin{align}\label{eq:ramified}
			\bar{f}_1(x_1, \ldots, x_n) = (x_1^k, x_2, \ldots, x_n).
		\end{align}

		Let $\Xi$ be the exceptional locus of $\mu$. Then $\mu(\Xi)$ is a closed subset of $\overline{X}_1$ of codimension at least two. The closed subset $\Upsilon := \cup_{g \in G} g.\mu(\Xi)$ of $\overline{X}_1$ also has codimension at least two.
		
		By assumption, there exists a non-zero $P \in H^0(\overline{X}, {\rm Sym}^k \Omega_{\overline{X}}(\log \Sigma_X))$ for some positive integer $k$. Since $\mu$ is an isomorphism over $\overline{X}_1^\circ \backslash \Upsilon$, $P$ induces a logarithmic symmetric differential on $(\overline{X}_1^\circ, \Sigma_{X_1}^\circ)|_{\overline{X}_1^\circ \backslash \Upsilon}$. By the Hartogs theorem, such a logarithmic symmetric differential extends to a logarithmic symmetric differential $P_0 \in H^0\big(\overline{X}_1^\circ, {\rm Sym}^k \Omega_{\overline{X}_1^\circ}(\log \Sigma_{X_1}^\circ)\big)$. We define
$
		Q := \prod_{g \in G} g^* P,$
		which is a non-zero $G$-invariant logarithmic symmetric differential in $H^0\big(\overline{X}_1^\circ, {\rm Sym}^{k|G|} \Omega_{\overline{X}_1^\circ}(\log \Sigma_{X_1}^\circ)\big)$,  as
		\[
		g : (\overline{X}_1^\circ, \Sigma_{X_1}^\circ) \to (\overline{X}_1^\circ, \Sigma_{X_1}^\circ)
		\]
		is an automorphism of the log pair $(\overline{X}_1^\circ, \Sigma_{X_1}^\circ)$ for any $g\in G$.  By the local description of $\bar{f}_1$ in \eqref{eq:ramified}, $Q$ descends to a logarithmic symmetric differential
		\[
		R \in H^0\big(\overline{Y}^\circ, {\rm Sym}^{|G|k} \Omega_{\overline Y}(\log \Sigma_Y)|_{\overline{Y}^\circ}\big),
		\]
		such that $\bar{f}_1^*R=Q$.
		Since $\overline{Y} \backslash \overline{Y}^\circ$ has codimension at least two, by the Hartogs theorem again, $R$ extends to a non-zero logarithmic symmetric differential in
		\[
		H^0\big(\overline{Y}, {\rm Sym}^{|G|k} \Omega_{\overline{Y}}(\log \Sigma_Y)\big).
		\]
		The lemma is proved.
	\end{proof}
	 
	\subsection{Constructing logarithmic symmetric differentials}
Let $\overline{X}$ be a smooth projective compactification of $X$ such that $\Sigma=\overline X \backslash X$ is a simple normal crossing divisor.  We fix a  smooth K\"ahler metric $\overline{\omega}$ on $\overline{X}$, and let $\omega$ be its restriction on $X$.  By \cref{GS}, $\tilde{u}$ is harmonic with respect to $\omega$.  Let $u:X\to \widetilde{X}\times_\varrho \Delta(G)$ be the corresponding section of $\tilde{u}$ defined in \Cref{sec:ems}.  Recall that  $|\nabla {u}|_\omega^2\in L^1_{\rm loc}( {X})$ is the energy density function in \cref{sec:harmonic}.    By \cref{rem:Litschitz},  $|\nabla u|_\omega^2$ is moreover locally bounded as $\tilde{u}$ is locally Lipschitz.   
 

Fix now an apartment $A$ in $ \Delta(G)$, which is isometric to $\mathbb{R}^{N}$. Here $N$ is the $K$-rank of $G$.    Let $W\subset {\rm Isom}(A)$ be the \emph{affine Weyl group} of $\Delta(G)$. The \emph{vectorial Weyl group} $W^v:=W\cap\GL(A)$ is  a finite  group generated by reflections.   Note that $W=W^v\ltimes \Lambda$, where    $\Lambda$ is a lattice acting on $A$ by translations.  For the root system $\Phi=\{\alpha_1,\ldots,\alpha_m\}\subset A^*-\{0\}$ of $\Delta(G)$, one has
\begin{align*}
	 \{w^*\alpha_1,\ldots,w^*\alpha_m\}=\{\alpha_1,\ldots,\alpha_m\}\quad \mbox{for any}\ w\in W^v.
\end{align*}
 In other words, the action of $W^v$ on $\Phi$ is a permutation.    It follows that 
  \begin{align}\label{eq:coin}
 	\{w^*d\alpha_1,\ldots,w^*d\alpha_m\}=\{d\alpha_1,\ldots,d\alpha_m\}\quad \mbox{for any}\ w\in W.
 \end{align} 
Here each $d\alpha_i$ is a linear real one-form  on $A$. 

For any regular point $x\in \cR(u)$ of $u$ (cf.~Definition~\ref{def:sing}),     one can choose a   simply-connected open neighborhood  $U$ of $x$   such that
\begin{itemize}
	\item  the inverse image $\pi_X^{-1}(U)=\coprod_{i\in I}U_i$ is a union of disjoint open sets in $\widetilde{X}$,  each of which is mapped isomorphically onto 
	$U$ by $\pi_X:\widetilde{X}\to X$.
	\item For some $i\in I$,  there is an apartment $A_i$ of $\Delta(G)$ such that $u({U}_i)\subset A_i$.
\end{itemize}
Since  $\tilde{u}$ is $\varrho$-equivariant and  $G(K)$ acts transitively on the set of apartments of $\Delta(G)$, for any other  $U_j$,  $u(U_j)$ is contained in some other apartment $A_j$.    
For each $j\in I$,  we  choose $g_j\in G(K)$ such that $g_j(A_j)=A$. 
We denote   $u_j=g_j \tilde{u}\circ (\pi_X|_{U_j})^{-1} : U\to  A$.  By the pluriharmonicity of $\tilde{u}$,   each $\alpha_k\circ u_j$  is a pluriharmonic function on $U$, and thus $\partial \alpha_k\circ u_j$   is a holomorphic 1-form on $U$.   
\begin{lem}\label{lem:welldef}
For each $i,j\in I$, the two sets of holomorphic 1-forms $\{\partial \alpha_1\circ u_i,\ldots,\partial \alpha_m\circ u_i\}$ and $\{\partial \alpha_1\circ u_j,\ldots,\partial \alpha_m\circ u_j\}$ coincide. 
\end{lem} 
\begin{proof}
	Choose $\gamma\in \pi_1(X)$ such that $\gamma$ maps $U_i$ to $U_j$ isomorphically.  Since $\tilde{u}$ is $\varrho$-equivariant,  one has $ \varrho(\gamma)\tilde{u}\circ (\pi_X|_{U_i})^{-1}=\tilde{u}\circ (\pi_X|_{U_j})^{-1}$,  and thus
	\begin{align}\label{eq:transform}
		u_j= g_j\varrho(\gamma) g_i^{-1}u_i. 
	\end{align} 
	We write $g:=g_j\varrho(\gamma) g_i^{-1}\in G(K)$. Then \eqref{eq:transform} implies that     $u_i(U)\subset A\cap g^{-1}A$.  By \cite[Corollary 4.2.25]{KP23} and \cite[Axiom 4.1.4 (A 1)]{KP23}, there exists $w\in  W$  such that $wx=gx$ for any $x\in A\cap g^{-1}A$.   This implies that    $u_j=wu_i$.   We conclude that 
	\begin{align*}
	 \{\partial \alpha_1\circ u_j,\ldots,\partial \alpha_m\circ u_j\} = \{\partial \alpha_1\circ w u_i,\ldots,\partial \alpha_m\circ wu_i\}= \{\partial \alpha_1\circ u_i,\ldots,\partial \alpha_m\circ u_i\},
	\end{align*}
	where the last equality follows from \eqref{eq:coin}. The lemma is proved. 
\end{proof} 
By \cref{lem:welldef}, $	 \{\partial \alpha_1\circ u_j,\ldots,\partial \alpha_m\circ u_j\}$ defines a well-defined multi-valued holomorphic 1-form on $\cR(u)$, denoted by $\{\omega_1,\ldots,\omega_m\}$.    Let $T$ be a formal variable. Then we can write 
\begin{align}\label{eq:construction}
\prod_{k=1}^{m}\big(T-\omega_j\big)=:T^m+\sigma_1T^{m-1}+\cdots+\sigma_m,
\end{align}
such that  $\sigma_k\in H^0(\cR(u),{\rm Sym}^k\Omega_X|_{\cR(u)})$.   
\begin{proposition}\label{prop:log} For any $k\in \{1,\dots m\}$,  
 $\sigma_{k}$ extends to a logarithmic symmetric differential $H^0(\overline{X}, \Sym^k\Omega_{\overline{X}}(\log \Sigma))$.  Moreover, if $\tilde{u}$ is not constant,  there exists some $k$ such that $\sigma_{k}\neq 0$. 
\end{proposition}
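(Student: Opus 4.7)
The plan is to split the proof into three parts: extension of $\sigma_k$ across the harmonic-map singular set $\cS(u)$, extension across the divisor $D$ as \emph{logarithmic} symmetric forms, and a non-vanishing argument.

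For extension across $\cS(u)$, recall from \cite{GS92} that $\cS(u)$ is closed of real Hausdorff codimension $\geq 2$ in $X$, and $\tilde u$ is locally Lipschitz by \cref{thm:harmonicmaps}. Hence $|\nabla u|$ is locally bounded on $X$, and inequality \eqref{eq:k} produces a locally bounded holomorphic section $\sigma_k$ of $\Sym^k\Omega_X$ on $X-\cS(u)$; Shiffman's removable singularity theorem then gives a global extension $\sigma_k\in H^0(X,\Sym^k\Omega_X)$.

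Extending across $D$ is the heart of the argument. Since the crossings of $D$ have complex codimension $\geq 2$ in $\overline X$, Hartogs reduces the question to smooth points of $D$. Fix a smooth point $x_0\in D_i$ and an admissible coordinate neighborhood $(U;z_1,\ldots,z_n)$ as in \cref{goodcoord} with $D_i\cap U=\{z_1=0\}$; set $U^\ast=U-D_i$. On a simply connected open subset $V\subset U^\ast$ meeting the regular locus of $u$, the lift $\tilde u$ takes values in a single apartment $A\cong\bR^N$, so it decomposes into pluriharmonic coordinate functions $u^1,\ldots,u^N$ and each $\partial u^\alpha/\partial z_j$ is holomorphic on $V$. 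The estimate \eqref{mp2} shows $|\partial u^\alpha/\partial z_j|\leq \Lambda$ for $j\geq 2$, so these components extend holomorphically across $\{z_1=0\}$ by Riemann extension. For $j=1$, the per-transverse-disk energy bound \eqref{ondisks}---whose constant is uniform in $z_\ast$ because it depends only on $L_\gamma$ and the uniform boundary Lipschitz bound provided by \eqref{mp2}---yields
\begin{equation*}
\int_{r<|z_1|<1/2}\Big|\tfrac{\partial u^\alpha}{\partial z_1}(z_1,z_\ast)\Big|^2\,dA\leq -\tfrac{L_\gamma^2}{2\pi}\log r + c .
\end{equation*}
Expanding $\partial u^\alpha/\partial z_1(z_1,z_\ast)=\sum_n a_n(z_\ast)z_1^n$ as a Laurent series in $z_1$, the logarithmic growth forces $a_n(z_\ast)\equiv 0$ for $n\leq -2$ and $|a_{-1}(z_\ast)|^2\leq L_\gamma^2/(4\pi^2)$ uniformly in $z_\ast$. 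Thus $z_1\cdot \partial u^\alpha/\partial z_1$ is holomorphic on $U^\ast$, and a Cauchy estimate on the circle $|\zeta|=1/2$ (where $|u_{z_1}|$ is uniformly bounded by local Lipschitz continuity away from $D_i$) shows it is locally bounded near $\{z_1=0\}$; Riemann extension then extends it holomorphically, so $\partial u^\alpha$ is a logarithmic $1$-form on $U$. Hence each $\eta_j=\alpha_j(\partial u)$ is logarithmic, and since by \eqref{eq:coin} the set $\{\eta_1,\ldots,\eta_m\}$ is only permuted by the $W_{\rm aff}$-monodromy, the elementary symmetric polynomials $\sigma_k=e_k(\eta_1,\ldots,\eta_m)$ descend to a logarithmic symmetric form on $U$ extending $\sigma_k|_{U^\ast}$.

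For the non-vanishing claim, if all $\sigma_k$ vanish identically then $\prod_j(T-\eta_j)\equiv T^m$, forcing every $\eta_j\equiv 0$ on each local lift to an apartment; since the roots of the semisimple group $G$ span $A^\ast$, this forces $\partial\tilde u\equiv 0$, hence $d\tilde u\equiv 0$ on the regular locus, and by density $u$ is constant. The hard part will be the pointwise Laurent control on $z_1\partial u^\alpha/\partial z_1$: to apply Riemann extension one needs uniform local boundedness, not merely the per-fiber extension produced by the Laurent coefficient analysis, and the needed uniformity comes from plugging the boundary Lipschitz estimate \eqref{mp2} into a Cauchy integral on $|\zeta|=1/2$. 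A secondary subtlety is that the $\eta_j$ are only canonically defined modulo $W_{\rm aff}$, so individual $\eta_j$ are single-valued only on a finite cyclic cover of $U^\ast$ that trivializes the monodromy permutation; one performs Riemann extension there and descends by Weyl invariance to obtain the logarithmic extension of $\sigma_k$ itself.
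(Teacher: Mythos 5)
Your first step (Shiffman across $\cS(u)$) and your non-vanishing argument coincide with the paper's. The extension across $D$, however, is where your route diverges and where there is a genuine gap. You perform a Laurent expansion of $\partial u^{\alpha}/\partial z_1(\cdot,z_*)$ on the punctured disk and apply Riemann extension to the individual root forms $\eta_j$. But the $\eta_j$ are not single-valued holomorphic objects on $U^*=U\setminus D_i$: they are defined only on the regular locus $U^*\setminus\cS(u)$ and carry monodromy given by the permutation action of $W_{\rm aff}$ on the roots, and this monodromy is a representation of $\pi_1(U^*\setminus\cS(u))$, not of $\pi_1(U^*)\cong\bZ$. Since $\cS(u)$ is merely a closed set of Hausdorff codimension $\geq 2$ (not an analytic subvariety), $\pi_1(U^*\setminus\cS(u))$ can be large, the trivializing cover need not be cyclic, and it need not extend over $\cS(u)$; so "pass to a finite cyclic cover of $U^*$" does not repair this. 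Relatedly, a fixed transverse disk $\{(z_1,z_*):0<|z_1|<1/2\}$ may meet $\cS(u)$, in which case $\partial u^{\alpha}/\partial z_1(\cdot,z_*)$ admits no Laurent expansion on the annulus at all. Even after extending the coefficients $\sigma_k$ across $\cS(u)$, the "roots" $\eta_j$ of $T^m+\sigma_1T^{m-1}+\cdots+\sigma_m$ generically acquire branch-type singularities along the discriminant locus, so they cannot be made single-valued holomorphic on any finite unbranched cover; only the symmetric combinations are honest holomorphic objects.

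The paper circumvents exactly this by never touching the individual $\eta_j$ near $D$: it works with the single-valued coefficients $\tau_{\alpha}$ of $\sigma_k=\sum_{|\alpha|=k}\tau_{\alpha}dz^{\alpha}$ (already extended over $\cS(u)$ by Shiffman), converts the pointwise bound $|\sigma_k|_{\omega_P}^{2/k}\leq C'_k|\nabla u|^2_{\omega_P}$ and the logarithmic energy growth of \cref{logestimate} into weighted $L^{2/k}$ integral estimates on $U(r)$, deduces meromorphic extension of $\tau_{\alpha}$ from the $L^p$-criterion of \cref{meromorphic}, and then bounds the pole order by a contradiction argument comparing $\log r$ against $\log^2 r$. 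Your Laurent-coefficient computation (killing $a_n$ for $n\leq -2$ and bounding $a_{-1}$) is the right heuristic for why simple poles are forced, and would be fine if the per-fiber holomorphy and single-valuedness held; to make it rigorous you would have to replace it by an argument that sees only $W$-invariant data, which is essentially what the paper's integral-estimate route does.
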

\begin{proof}
	By \cite[Theorem 6.4]{GS92}, $\cS(u)$ is  a closed subset of   $X$  of Hausdorff codimension at least two. Since $u$ is locally Lipschitz,    for any $x\in X$, there are  a neighborhood $\Omega_x$ of $x$ and a constant $C_x$ such that   $|\nabla u|_\omega \leq C_x$   on $\Omega_x$.  Note that  there is a uniform constant $C_0>0$ such that
	\begin{align}\label{eq:k}
		|\sigma_k|_\omega\leq C_0 |\nabla u|_\omega^k \quad \mbox{ over } \quad \cR(u).
	\end{align} Hence  over $\Omega_x\cap \cR(u)$, one has 
	$$
	|\sigma_{k}|_{\omega}\leq C_0 |\nabla u|_\omega^k\leq C_0C^k_x.
	$$
By  the result on removable singularity in \cite[Lemma 3.(ii)]{Shi68},  $\sigma_{k}$ extends to a  holomorphic symmetric form in $H^0(X, \Sym^k\Omega_{X})$, which we still denote by $\sigma_k$.   
	
Choose any point $x$ in the smooth locus of $\Sigma$.    By \eqref{eq:energy} in \cref{logestimate}, there is an admissible coordinate  neighborhood $(U;z_1,\ldots,z_n)$ centered at $x$  with $\Sigma=(z_1=0)$, and  a constant $C_1>0$  such that one has
\begin{align}  \label{eq:energy2}
	- \frac{L^2_\gamma}{2\pi} \log r \cdot \mathsf{Vol}\left(\mathbb D_{\frac{1}{2}}^{n-1} \right)\leq  \int_{\bD_{r,\frac{1}{2}}\times \bD^{n-1}_{\frac{1}{2}}} |\nabla u|_{\omega_0}^2 {\rm dvol}_0 \leq - \frac{L^2_\gamma}{2\pi} \log r  \cdot \mathsf{Vol}\left(\mathbb D_{\frac{1}{2}}^{n-1} \right)+C, \ \ \ \forall \ 0<r< \frac{1}{2}.
\end{align} 
Here $\omega_0:=\sqrt{-1}\sum_{i=1}^{n}\frac{dz_i\wedge d\bar{z}_i}{2}$,  $d\vol_0$  is the  volume form of $\omega_0$  on $U^*:=U\backslash \Sigma$,  and $\mathsf{Vol}\big(\mathbb D_{\frac{1}{2}}^{n-1} \big)$   is the Euclidean volume of $\mathbb D_{\frac{1}{2}}^{n-1}$. 
 Note that  
 \begin{align*} 
 	|\sigma_k|_{\omega_0}\leq C_0 |\nabla u|_{\omega_0}^k \quad \mbox{ over } \quad \cR(u).
 \end{align*}   Thus,   \eqref{eq:k} implies  that  there is a constant $C>0$  such that one has
\begin{align*}  
-C \log r  \leq  \int_{\bD_{r,\frac{1}{2}}\times \bD^{n-1}_{\frac{1}{2}}} |\sigma|_{\omega_0}^{\frac{2}{k}} {\rm dvol}_0 \leq -C \log r   +C, \ \ \ \forall \ 0<r< \frac{1}{2}.
\end{align*} On $U^*$, we write 
$
\sigma_{k}(z)=\sum_{|\alpha|=k}\tau_{\alpha}(z)dz^{\alpha},
$  where $\alpha=(\alpha_1,\ldots,\alpha_n)\in \mathbb{N}^n$ with $|\alpha|:=\sum_{i=1}^{n}\alpha_i$, and $dz^\alpha:=dz_1^{\alpha_1}\cdots dz_n^{\alpha_n}$.  Then $\tau_{\alpha}$  are holomorphic functions over $U^*$.  
 It follows that for each $\alpha$, we have
$$
\int_{\bD_{r,\frac{1}{2}}\times \bD^{n-1}_{\frac{1}{2}}} |\tau_\alpha(z)|^{\frac{2}{k}} idz_1\wedge d\overline{z}_1\wedge\cdots\wedge idz_n \leq -C \log r   +C, \ \ \ \forall \ 0<r< \frac{1}{2}.
$$ 
We now prove that $\tau_{\alpha}(z)$ extends to a meromorphic function over  $U$ for each $\alpha$.  
We fix  even  $m>0$. Then
\begin{align*}
F(r):=\int_{\bD_{r,\frac{1}{2}}\times \bD^{n-1}_{\frac{1}{2}}} |z_1|^{m-1}|\tau_{\alpha}|^{\frac{2}{k}} idz_1\wedge d\overline{z}_1\wedge\cdots\wedge idz_n\wedge d \overline{z}_n 
 \leq -C \log r   +C, \ \ \ \forall \ 0<r< \frac{1}{2}.
\end{align*}   
It follows that for any  $r\in (0,\frac{1}{2})$, we have
\begin{align*}
\int_{\bD_{r,\frac{1}{2}}\times \bD^{n-1}_{\frac{1}{2}}} |z_1|^{m}|\tau_{\alpha}|^{\frac{2}{k}} idz_1\wedge d\overline{z}_1\wedge\cdots\wedge idz_n\wedge d \overline{z}_n 
&= -\int_{r}^{\frac{1}{2}}tF'(t)dt
\\&=rF(r)+\int_{r}^{\frac{1}{2}}F(t)dt-\frac{1}{2}F(\frac{1}{2})\\&\leq -Cr\log r+Cr-C\int_{r}^{\frac{1}{2}}\log t dt-\frac{1}{2}F(\frac{1}{2})+(\frac{1}{2}-r)C.
\end{align*}
This yields \begin{align*}
	\int_{\bD^*_{\frac{1}{2}}\times \bD^{n-1}_{\frac{1}{2}}} |z_1|^{m}|\tau_{\alpha}|^{\frac{2}{k}} idz_1\wedge d\overline{z}_1\wedge\cdots\wedge idz_n\wedge d \overline{z}_n <+\infty.
\end{align*}
By \cref{meromorphic} below we conclude that $z_1^{\frac{km}{2}}  \tau_\alpha$, hence    $\tau_\alpha$   extends to a meromorphic function over $\mathbb D^n$. Thus,  there exists some $\ell\in \bZ$ such that    $\tau_{\alpha}(z)=z_1^\ell  b_\alpha(z)$ such that $b_\alpha(z)\in \sO(U)$ which is not identically equal to zero on $\Sigma$. 

Take a point $y=(0,y_2,\ldots,y_n)\in \Sigma \cap U$ such that $b_\alpha(y)\neq 0$. Then for some $\ep>0$ one has $|b_\alpha(z)|^{\frac{2}{k}}\geqslant C_3$  over  $$V:= \{(z_1,\ldots,z_n)\in \bD^*_{\frac{1}{2}}\times \bD^{n-1}_{\frac{1}{2}}\mid  |z_1|<\ep, |z_2-y_2|<\ep,\ldots, |z_n-y_n|<\ep \} $$
for some constant $C_3>0$.  We shall switch to the Poincar\'e-type metric $\omega_P$ defined in \eqref{eq: Poin} on $U^*$ and apply \eqref{eq:energy3}. By the construction of $\sigma_k$,  we have
\begin{align*} 
	|\sigma_k|_{\omega_P}\leq C_0 |\nabla u|_{\omega_P}^k \quad \mbox{ over } \quad \cR(u).
\end{align*} 
Since   
$$
|\sigma_k(z)|_{\omega_P} \geq  |\tau_\alpha(z) dz_\alpha|_{\omega_P}=|\tau_\alpha(z)| |z_1|^{2\alpha_1}(\log |z_1|^2)^{2\alpha_1}, 
$$
then by \eqref{eq:energy3}, there exists a constant $C_4>0$ such that  one has
\begin{align} \nonumber
& C_3\mathsf{Vol}\left(\mathbb D_{\frac{1}{2}}^{n-1} \right)\int_{r}^{\frac{1}{2}}   \frac{t^{\frac{2\ell+2\alpha_1}{k}}d\log t}{|\log t|^2} = \\\nonumber
& C_3\int_{V\cap \bD_{r,\frac{1}{2}}\times \bD^{n-1}_{\frac{1}{2}}}|z_1|^{\frac{2\ell+2\alpha_1}{k}} \frac{idz_1\wedge d\overline{z}_1}{|z_1|^2(\log |z_1|)^2}\wedge  idz_2\wedge d \overline{z}_2\wedge  \cdots\wedge idz_n\wedge d \overline{z}_n \\\label{eq:contr}
& \leq \int_{V\cap  \bD_{r,\frac{1}{2}}\times \bD^{n-1}_{\frac{1}{2}}}  |\sigma_k|_{\omega_P}^2 d\vol_{\omega_P}\leq C_4\log \frac{1}{r}+C_4, \quad \forall \quad 0<r<\frac{1}{2}.
  \end{align}  
  If $\ell<-\alpha_1$,  then there exists $\ep>0$ such that  $t^{\frac{2\ell+2\alpha_1}{k}}\geq 2|\log t|^3$ for $0<t<\ep$. It follows that there exists a constant $C_5>0$ with 
$$
 \int_{r}^{\frac{1}{2}}   \frac{t^{\frac{2\ell+2\alpha_1}{k}}d\log t}{|\log t|^2}\geqslant  \log^2 r -C_5, \quad \forall \quad 0<r<\ep.
$$
By \eqref{eq:contr}, we have
$$
C_3\mathsf{Vol}\left(\mathbb D_{\frac{1}{2}}^{n-1} \right)( \log^2 r -C_5)\leq C_4\log \frac{1}{r}+C_4, \quad \forall \quad 0<r<\ep.
$$
for any $0<r<\ep_2$,  
which yields a contradiction.  Thus,  $\ell+\alpha_1\geqslant 0$, which implies that 
$$
\sigma_{k}\in 	H^0(\overline{X}^\circ, \Sym^k\Omega_{\overline{X}}(\log \Sigma)|_{\overline{X}^\circ}). 
$$
Here we denote by $\overline{X}^\circ:=\overline{X} \backslash \cup_{j\neq i} \Sigma_i\cap \Sigma_j$ whose complement has codimension at least two in $\overline{X}$. By the Hartogs theorem, it extends to a logarithmic symmetric form on $\overline{X}$.  The first claim is proved. 

If $u$ is not constant, then there is some connected open set $U\subset X$   such that the pluriharmonic map $u_i:U\to A$ defined above is not constant. As $G$ is semisimple, its root system $\{\alpha_1,\ldots,\alpha_m\}$  generates $A^*$.  Thus, the mutivalued holomorphic 1-form $\{\omega_1,\ldots,\omega_m\}$ constructed above  is non zero.  By   \eqref{eq:construction}, $\sigma_k\neq 0$ for some $k\in \{1,\ldots, m\}$.  We prove the second claim. The proposition is proved. 
\end{proof}
The following lemma is the criterion on the meromorphicity of functions in terms of $L^p$-boundedness.
\begin{lem}\label{meromorphic}
	Let $f$ be a  holomorphic function on $(\mathbb D^*)^\ell\times \mathbb D^{n-\ell}$ such that $$\int_{(\mathbb D^*)^\ell\times \mathbb D^{n-\ell}}|f(z)|^pidz_1\wedge d\overline{z}_1\wedge\cdots\wedge i dz_n\wedge d\overline{z}_n\leq C,$$ for some real $0<p<\infty$ and some positive constant $C$. Then $f$ extends to a meromorphic function on $\mathbb D^n$. 
\end{lem}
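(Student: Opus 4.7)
The plan is to reduce the statement to a standard boundedness estimate coming from the subharmonicity of $|f|^p$. The central observation is that for any holomorphic $f$, the function $\log|f|$ is plurisubharmonic, and hence $|f|^p=\exp(p\log|f|)$ is plurisubharmonic as well (as the composition of a convex increasing function with a psh function). In particular, $|f|^p$ is subharmonic in each of the variables $z_1,\dots,z_n$ separately on $(\Delta^*)^\ell\times\Delta^{n-\ell}$.

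First I would apply the sub-mean value inequality iteratively to produce a pointwise bound. Fix a point $z=(z_1,\dots,z_n)$ in some relatively compact subdomain such as $(\Delta^*\cap\{|z_i|<1/2\})^\ell\times\Delta_{1/2}^{n-\ell}$. Choose the polydisc
\[
P(z):=\prod_{i=1}^{\ell}\bigl\{|w_i-z_i|<|z_i|/2\bigr\}\times\prod_{j=\ell+1}^{n}\bigl\{|w_j-z_j|<1/4\bigr\},
\]
which is entirely contained in $(\Delta^*)^\ell\times\Delta^{n-\ell}$. Iterating the one-variable sub-mean value in each coordinate yields
\[
|f(z)|^{p}\le\frac{1}{\mathrm{vol}(P(z))}\int_{P(z)}|f|^{p}\,dV\le\frac{C}{|z_1|^{2}\cdots|z_\ell|^{2}}\,\|f\|_{p}^{p},
\]
so that $|f(z)|\le C'\prod_{i=1}^{\ell}|z_i|^{-2/p}$ on the chosen subdomain.

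Second I would use this growth estimate to remove the singularities. Choose an integer $N\ge 2/p$. Then $g(z):=z_1^{N}\cdots z_\ell^{N}f(z)$ is holomorphic on $(\Delta^{*})^{\ell}\times\Delta^{n-\ell}$ and, by the previous estimate, is locally bounded near the normal crossing divisor $\{z_1\cdots z_\ell=0\}$. By the Riemann extension theorem, $g$ extends holomorphically to all of $\Delta^n$. Therefore $f=g/(z_1^{N}\cdots z_\ell^{N})$ extends to a meromorphic function on $\Delta^n$, as required.

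There is no real obstacle here beyond bookkeeping: the polydisc $P(z)$ has to be chosen small enough in the singular directions to stay inside the domain but large enough to capture the full $L^{p}$ integral, and the radii $|z_i|/2$ are precisely what makes $\mathrm{vol}(P(z))$ scale like $\prod|z_i|^{2}$. The argument is independent of the value of $p\in(0,\infty)$, so no separate treatment of small $p$ is needed.
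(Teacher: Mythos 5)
Your proof is correct and follows essentially the same route as the paper's: both use the plurisubharmonicity of $|f|^p$ together with a sub-mean value inequality over a polydisc whose radii in the punctured directions are comparable to $|z_i|$, yielding the pointwise bound $|f(z)|\le C'\prod_{i=1}^{\ell}|z_i|^{-2/p}$, and then clear the poles and invoke the Riemann extension theorem. The only differences are cosmetic (radii $|z_i|/2$ versus $|z_i|$, and $N\ge 2/p$ versus $\lceil 2/p\rceil$).
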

\begin{proof}
	 Since $|f(z)|^p$ is plurisubharmonic on $(\mathbb D^*)^k\times \mathbb D^\ell$, by the mean value inequality, for any 
	  $
	 z=(z_1,\ldots,z_n)\in (\bD^*_{\frac{1}{2}})^\ell\times (\bD_{\frac{1}{2}})^{n-\ell}$   
	  one has
	 $$
|f(z)|^p\leq \frac{4^{n-\ell}}{\pi^n \prod_{i=1}^{\ell}|z_i|^2} \int_{\Omega_z}|f(\zeta)|^pid\zeta_1\wedge d\overline{\zeta}_1\wedge\cdots\wedge  i d\zeta_n\wedge d\overline{\zeta}_n\leq \frac{4^{n-\ell}C}{\pi^n \prod_{i=1}^{\ell}|z_i|^2}
	 $$
	 where $$\Omega_{z}:= \{(\zeta_1,\ldots,\zeta_n) \in (\mathbb D^*)^\ell\times \mathbb D^{n-\ell}\mid | \zeta_{i}-z_{i} \mid < |z_i| \text { for } i \leq \ell ;\left|\zeta_{i}-z_{i}\right| < \frac{1}{2} \text { for } i>\ell \}.$$
Thus, there is a constant $C_0>0$ such that 
	 	 $$
	 |f(z)|\leq   C_0\prod_{i=1}^{\ell}|z_i|^{-\frac{2}{p}} 
	 $$
for any 
$
z=(z_1,\ldots,z_n)\in (\bD^*_{\frac{1}{2}})^\ell\times (\bD_{\frac{1}{2}})^{n-\ell}$. Hence $\prod_{i=1}^{\ell}z_i^{\lceil \frac{2}{p}\rceil}f(z) $ is  bounded  over    $(\bD^*_{\frac{1}{2}})^\ell\times (\bD_{\frac{1}{2}})^{n-\ell}$. By the Riemann extension theorem, it  extends to a holomorphic function over $\mathbb D^n$.  The lemma is proved. 
\end{proof}
 
\begin{thm}\label{sym form}
 Let $(\overline{X},\Sigma)$ be a log smooth pair. Let $K$ be a   non-archimedean local field $K$.   If $\varrho: \pi_1(X)\to\GL_N(K)$ is an unbounded representation, then   we have 
 	\begin{align}\label{eq:sym}
 		H^0(\overline{X}, \Sym^k\Omega_{\overline{X}}(\log \Sigma))\neq 0
 	\end{align} 
 	for some positive integer $k$. 
 \end{thm}
 
 \begin{proof}
 	\noindent {\it Step 1: Assume that $\varrho$ is reductive.} By \cref{descends}, to prove \eqref{eq:sym}, we are free to replace $X$ by its finite étale covers. We denote by $G$ the Zariski closure of $\varrho$, which is assumed to be reductive. Let $G^0$ be the identity component of $G$.  After replacing $X$ by a finite étale cover corresponding to the finite index subgroup $\varrho^{-1}(\varrho(\pi_1(X))\cap G^0(K))$ of $\pi_1(X)$, we can assume that the Zariski closure $G$ of $\varrho$ is connected. Hence the radical $R(G)$ of $G$ is a torus, and the derived group $D(G)$ is semisimple. Write $T := G/D(G)$ and $G' = G/R(G)$. Then $G'$ is semisimple and $T$ is a torus. Moreover, the natural morphism 
 	\[
 	G \to  G' \times T
 	\] 
 	is an isogeny. We may assume that $G'$ and $T$ are split over $K$  after we replace $K$ by a finite extension.  Denote by $\varrho':\pi_1(X)\to   G'(K)\times T(K)$ the composed morphism of $\varrho$ and $G(K)\to T(K)\times G'(K)$. Then it is also Zariski dense.  
 	
 	Since we assume that  the image of $\varrho(\pi_1(X))$ is unbounded, it follows that the image of $\varrho'$ is also  unbounded (see e.g. \cite[Lemma 2.2.10]{KP23}).  
 	Let $p_1: G'(K) \times T(K) \to G'(K)$ and $p_2: G'(K) \times T(K) \to T(K)$ be the projection maps. Then   representations $\sigma_1 := p_1 \circ \varrho'$ and $\sigma_2 := p_2 \circ \varrho'$ are both Zariski dense.
 	
 	Assume first that $\sigma_1 : \pi_1(X) \to G'(K)$ is unbounded. By \cref{thm:harmonicmaps}, there is a locally Lipschitz $\sigma_1$-equivariant pluriharmonic map $\tilde{u}:\widetilde{X} \to \Delta(G')$ which has logarithmic energy growth. Note that $\tilde{u}$ is not constant; otherwise, its image point would be fixed by $\sigma_1(\pi_1(X))$, and the subgroup of $G'(K)$ fixing a point of $\Delta(G')$ is compact, which contradicts our assumption. Thus, \eqref{eq:sym} follows from \cref{prop:log}.
 	
 	Now assume that $\sigma_1 : \pi_1(X) \to G'(K)$ is  bounded. Then the image of $\sigma_2 : \pi_1(X) \to T(K)$ is unbounded and must be infinite. Since $T(K)$ is abelian, it follows that $\sigma_2$ induces a morphism $H_1(X,\mathbb{Z}) \to T(K)$ with infinite image. In particular, by the universal coefficient theorem, we conclude that $H^1(X,\mathbb{C})$ is infinite.  
 	
 	\begin{claim}\label{claim2}
 		$H^0(\overline{X},\Omega_{\overline{X}}(\log \Sigma))\neq 0$.
 	\end{claim}
 	
 	\begin{proof}[Proof of \cref{claim2}]
 		By the theory of mixed Hodge structures,  one has an isomorphism 
 		$$
 		H^1(X,\mathbb{C}) \simeq 	H^0(\overline{X}, \Omega_{\overline{X}}(\log \Sigma)) \oplus H^{0,1}(\overline{X}). 
 		$$   
 		Since $H^1(X,\mathbb{C})$ is infinite, either $H^0(\overline{X}, \Omega_{\overline{X}}(\log \Sigma))$ or $H^{0,1}(\overline{X})$ is non-zero. In the latter case, by Hodge duality, $H^0(\overline{X}, \Omega_{\overline{X}})$ and thus $H^0(\overline{X}, \Omega_{\overline{X}}(\log \Sigma))$ are non-zero.   
 	\end{proof}
 	
 	In summary, we have proved that $H^0(\overline{X}, \Sym^k \Omega_{\overline{X}}(\log \Sigma)) \neq 0$ for some positive integer if $\varrho$ is reductive.    
 	
 	\medskip
 	
 	\noindent {\it Step 2: General case.} Let $\varrho^{ss} : \pi_1(X) \to \GL_N(\bar{K})$ be the semisimplification of $\varrho$. It follows that $\varrho^{ss}$ is reductive. Since $\pi_1(X)$ is finitely generated, there exists a finite extension $L$ of $K$ such that $\varrho^{ss} : \pi_1(X) \to \GL_N(L)$. Note that $\varrho^{ss}$ is also unbounded (see e.g. \cite[Lemma 3.5]{DY23}). Applying the result from Step 1, we conclude \eqref{eq:sym}. The theorem is proved. 
 \end{proof}  
 
\section{Proof of \cref{main}}
\subsection{On Simpson's integrality conjecture}
 In \cite{Sim92}, Simpson conjectured that for any smooth projective variety $X$, a rigid representation    $\varrho:\pi_1(X)\to\GL_N(\bC)$ is conjugate to an \emph{integral} one, \emph{i.e.} a representation $\pi_1(X)\to\GL_N(O_k)$ where $k$ is a number field and $O_k$ denotes the ring of integers of $k$.  This is known as \emph{Simpson's integrality conjecture}. In \cite[Corollary 1.8]{Kli13}, Klingler proved Simpson's conjecture for compact Kähler manifolds that do not admit symmetric differentials. In this subsection, we extend Klingler's theorem to smooth quasi-projective varieties. 
\begin{thm}\label{rigid}
	Let $(\overline{X},\Sigma)$ be a log smooth pair. Assume that $H^0(\overline{X}, \Sym^k \Omega_{\overline{X}}(\log \Sigma))=0$ for every positive integer $k$.  Then  for any positive integer $N$, 
	 each semisimple representation $\varrho:\pi_1(X)\to\GL_N(\bC)$ is \emph{rigid} and \emph{integral}.    Moreover, $\varrho$ is a complex direct factor of a $\bZ$-variation of Hodge structure. 
\end{thm}
\begin{proof}
	\noindent {\it Step 1: Any reductive representation is rigid.} 
	Let $R_\mathrm{B}(X, N)$ be the representation scheme of $\pi_1(X)$ into $\GL_N$, which is an affine scheme of finite type defined over $\mathbb{Q}$ (cf. \cite{LM85} for the definition). For any field $K$, we have $R_\mathrm{B}(X, N)(K) = \mathrm{Hom}(\pi_1(X), \GL_N(K))$. Note that $\GL_N$ acts on $R_{\rm B}(X,N)$ by conjugation. Denote by $\pi: R_\mathrm{B}(X, N) \to M_{\rm B}(X, N)$ the GIT quotient, which is a surjective morphism of affine schemes of finite type defined over $\mathbb{Z}$.
	
	If $M_{\rm B}(X, N)$ is a positive-dimensional affine scheme, then there exists a $\bar{\mathbb{Q}}$-morphism $\psi: M_{\rm B}(X, N) \to \mathbb{A}^1$ whose image is Zariski dense. Since $\pi$ is surjective, we can find a closed irreducible curve $C \subset R_\mathrm{B}(X, N)$ defined over $\bar{\mathbb{Q}}$ such that $\psi \circ \pi|_{C}: C \to \mathbb{A}^1$ is generically finite. We may take an open subset $U \subset \mathbb{A}^1$ over which the morphism $\psi \circ \pi|_{C}: C \to \mathbb{A}^1$ is finite. 
	
	Let $k$ be a finite extension of $\mathbb{Q}$ such that $C$ is defined over $k$, and $\psi \circ \pi|_{C}$ is a morphism of $k$-schemes. Let $\mathfrak{p}$ be a non-archimedean place of $k$, and    $k_\kp$ be its completion. Then $k_\kp$ is a non-archimedean local field of characteristic zero. Take $x \in U(k_\kp)$ and $y \in C(\overline{k_\kp})$ over $x$. Then $y$ is defined over some finite extension of $k_\kp$, with its degree controlled by the degree of $\psi \circ \pi|_{C}$.   Note that there are only finitely many such field extensions. Hence there exists a finite extension $L$ of $k_\kp$ such that the points over $U(k_\kp)$ are all contained in $C(L)$.  Since $U(k_\kp)$ is unbounded, the image $\psi \circ \pi(C(L)) \subset \mathbb{A}^1(L)$ is unbounded.
	
	Let $R_0$ be the set of all bounded representations in $R_\mathrm{B}(X, N)(L)$. By a theorem of Yamanoi (\cite[Lemma 4.2]{Yam10}), $M_0 = \pi(R_0)$ is compact in $M_\mathrm{B}(X, N)(L)$ with respect to the analytic topology, implying that $\psi(M_0)$ is bounded in $\mathbb{A}^1(L)$. Accordingly, there exists some $\tau \in C(L)$ such that $\tau: \pi_1(X) \to \GL_N(L)$ is unbounded. By \cref{sym form}, we have $H^0(\overline{X}, \Sym^k \Omega_{\overline{X}}(\log \Sigma)) \neq 0$ for some positive integer $k$. This leads to a contradiction, proving that $M_{\rm B}(X, N)$ is zero-dimensional. Hence any representation $\varrho: \pi_1(X) \to \GL_N(\mathbb{C})$ is rigid.
	
	\medskip
	
	\noindent {\it Step 2: Any rigid representation is integral.} 
	Let $\varrho: \pi_1(X) \to \GL_N(\mathbb{C})$ be a semisimple  representation. By Step 1, it is rigid. Thus, after conjugation, there exists a number field $k$ such that $\varrho: \pi_1(X) \to \GL_N(k)$. Let $\kp$ be a non-archimedean place of $k$, and let $k_\kp$ be its completion. By assumption and \cref{sym form}, the extension $\pi_1(X) \to \GL_N(k_\kp)$ of $\varrho$ is bounded for each non-archimedean place  $\kp$ of $k$. Therefore, $\varrho$ factors through 
$
	\pi_1(X) \to \GL_N(O_k),$
	where $O_k$ is the ring of integers of $k$. Thus, $\varrho$ is integral.
	
	\medskip
	
	\noindent {\it Step 3: $\varrho$ is a complex direct factor of a $\mathbb{Z}$-VHS.} 
	Let $\varrho: \pi_1(X) \to \GL_N(O_k)$ be as in Step 2. For every embedding $\sigma: k \to \mathbb{C}$, the composition $\sigma \circ \varrho: \pi_1(X) \to \GL_N(\mathbb{C})$ is semisimple and rigid. By \cite{Moc06}, $\sigma \circ \varrho$ underlies a complex variation of Hodge structure for each embedding $\sigma: k \to \mathbb{C}$. The conditions in \cite[Proposition 7.1 and Lemma 7.2]{LS18} are satisfied, and we apply \cite[Proposition 7.1]{LS18} to conclude that $\varrho$ is a complex direct factor of a $\mathbb{Z}$-variation of Hodge structure. 
	The theorem is thus proved.
\end{proof}
 
\begin{rem}
	The above proof gives a new proof of  the rigidity part \cite{Ara02} in the projective case.  In the first version of the present paper on arXiv, we used Uhlenbeck's compactness in gauge theory to prove such result. However, we felt that it would be more interesting to establish \cref{rigid} from the theory of harmonic maps to Bruhat-Tits buildings, as it provides a unified approach to both rigidity and integrality.
	
	It is worth noting that non-abelian Hodge theory in the archimedean setting cannot be entirely avoided. Specifically, in Step 3, we rely on Mochizuki's theorem in \cite{Moc06}, whose proof is based on harmonic maps to symmetric spaces.
	
	Recently, Esnault and Groechenig \cite{EG18} proved that a cohomologically rigid local system over a quasi-projective variety with finite determinant and quasi-unipotent local monodromies at infinity is also integral. 
\end{rem}

\subsection{Proof of \cref{main}}
Let us prove \cref{main}.
\subsubsection{The case of characteristic zero}    
\begin{proof}[Proof of \cref{main} for ${\rm char}\, \bK=0$] 
	Since $\pi_1(X)$ is finitely generated, there exists a subfield $k \subset \bK$ such that ${\rm tr. deg.}(k/\bQ) < \infty$ and $\tau(\pi_1(X)) \subset \GL_{N}(k)$. We can choose an embedding $k \to \bC$, and thus assume that $\tau: \pi_1(X) \to \GL_{N}(\bC)$.
	
	Thanks to \cref{descends}, to prove the theorem, we are free to replace $X$ by finite étale covers. Let $\sigma: \pi_1(X) \to \GL_N(\bC)$ be the semisimplification of $\tau$. If $\sigma(\pi_1(X))$ is finite, then after replacing $X$ by a finite étale cover, we can assume that $\sigma(\pi_1(X))$ is trivial. In other words, $\tau(\pi_1(X))$ is contained in some unipotent group $U \subset \GL_N(\bC)$. Then, there exists a sequence of normal subgroups
	\[
	U = U_0 \supset U_1 \supset \cdots \supset U_s = \{1\}
	\]
	such that each $U_i/U_{i+1}$ is commutative. Since $\tau(\pi_1(X))$ is infinite, after replacing $X$ by a finite étale cover, there exists some $i$ such that $\tau(\pi_1(X)) \subset U_i$ and the natural map $\tau': \pi_1(X) \to U_i/U_{i+1}$ induced by $\tau$ has infinite image. Since $U_i/U_{i+1}$ is abelian, $\tau'$ factors through $H_1(X, \bZ) \to U_i/U_{i+1}$. In other words, $H_1(X, \bZ)$ is infinite. By the universal coefficient theorem, $H^1(X, \bC)$ is also infinite. By \cref{claim2}, we have $H^0(\overline{X}, \Omega_{\overline{X}}(\log \Sigma)) \neq 0$. The theorem is proved if $\sigma$ has finite image.
	
	Now, assume $\sigma$ has infinite image. We assume by contradiction that
	\[
	H^0(\overline{X}, \Sym^k \Omega_{\overline{X}}(\log \Sigma)) = 0
	\]
	for all $k > 0$. By \cref{rigid}, $\sigma$ is a direct factor of a semisimple representation $\varrho: \pi_1(X) \to \GL_m(\bZ)$ underlying a $\mathbb{Z}$-variation of Hodge structure. Let
	\[
	\Phi: X \to \sD/\Gamma
	\]
	be the corresponding period map, where $\sD$ is the period domain and $\Gamma = \varrho(\pi_1(X))$ is the monodromy group, which acts discretely on $\sD$. By Malcev's theorem, we can replace $X$ by a finite étale cover such that $\Gamma$ is torsion-free. Since $\varrho$ has infinite image, $\Phi$ has  positive-dimensional image. By a theorem of Griffiths \cite{Gri70}, there is a Zariski open subset $X_1 \subset \overline{X}$ containing $X$ such that $\Phi$ extends to a proper holomorphic map $X_1 \to \sD/\Gamma$. Its image $Z$ is thus a proper subvariety of $\sD/\Gamma$. By a theorem of Sommese \cite[Proposition IV]{Som78} (or \cite{DY23} for a new proof), there exists:
	\begin{enumerate}[label=\rm (\alph*)]
		\item a proper bimeromorphic map $\nu: Y \to Z$ from a smooth quasi-projective variety $Y$,
		\item a proper birational morphism $\mu: X_2 \to X_1$ from a smooth quasi-projective variety $X_2$,
		\item an algebraic and surjective morphism $f: X_2 \to Y$,
	\end{enumerate}
	such that we have the following commutative diagram:
	\[
	\begin{tikzcd}
		X_2 \arrow[r, "\mu"] \arrow[d, "f"] & X_1 \arrow[d] & \\
		Y \arrow[r, "\nu"] & Z &
	\end{tikzcd}
	\]
	Take a smooth projective compactification $\overline{Y}$ of $Y$ such that $\Sigma_Y = \overline{Y} - Y$ is a simple normal crossing divisor. Then $Y \to Z \to \sD/\Gamma$ is a generically immersive and horizontal map. By \cite{Bru18, BC20}, we know that the logarithmic cotangent bundle $\Omega_{\overline{Y}}(\log \Sigma_Y)$ is big. Therefore, there exists a  positive integer $k$ such that
	\[
	H^0(\overline{Y}, \Sym^k \Omega_{\overline{Y}}(\log \Sigma_Y)) \neq 0.
	\]
	Take a smooth projective compactification $\overline{X}_2$ of $X_2$ such that:
	\begin{itemize}
		\item $\Sigma_2 = \overline{X}_2 \backslash X_2$ is a simple normal crossing divisor.
		\item $f$ extends to a surjective morphism $\overline{f}: \overline{X}_2 \to \overline{Y}$ with $\bar{f}^{-1}(\Sigma_Y) \subset \Sigma_2$.
		\item $\mu$ extends to a birational morphism $\bar{\mu}: \overline{X}_2 \to \overline{X}$ with $\bar{\mu}^{-1}(\Sigma_X) \subset \Sigma_2$.
	\end{itemize}
 	We pull back a  non-zero logarithmic symmetric differential in $H^0(\overline{Y}, \Sym^k \Omega_{\overline{Y}}(\log \Sigma_Y))$ via $\overline{f}$ to obtain a non-trivial element $P \in H^0(\overline{X}_2, \Sym^k \Omega_{\overline{X}_2}(\log \Sigma_2))$. Let $\Xi$ be the exceptional locus of $\bar{\mu}$. Then $\bar{\mu}(\Xi)$ has codimension at least two in $\overline{X}$ since $\mu$ is birational. Thus, $P$ induces a section $P_0 \in H^0(\overline{X} \backslash \mu(\Xi), \Sym^k \Omega_{\overline{X}}(\log \Sigma)|_{\overline{X} \backslash \Xi})$. By Hartogs' theorem, $P_0$ extends to a non-trivial logarithmic symmetric differential in $H^0(\overline{X}, \Sym^k \Omega_{\overline{X}}(\log \Sigma))$. The theorem is proved in the case where ${\rm char}\, \bK = 0. $
\end{proof}

\subsubsection{The case of positive characteristic}  
\begin{proof}[Proof of \cref{main} for ${\rm char}\, \bK>0$.]
	We can assume that $\bK$ is algebraically closed after replacing $\bK$ with its algebraic closure. Let $p = {\rm char}\, \bK$. Let $R_{\rm B}(\pi_1(X),\GL_{N})$ be the representation scheme of $\pi_1(X)$ into $\GL_N$, which is of finite type and defined over $\bZ$.  
	Note that $R_{\rm B}(\pi_1(X),\GL_{N})(\bK)$ can be identified with the set $\operatorname{Hom}\left(\pi_1(X), \GL_N(\bK)\right)$. Consider the base change $R_{\bF_p} := R_{\rm B}(\pi_1(X),\GL_{N}) \times_{\spec \bZ}\spec \bF_p$, which is an affine $\bF_p$-scheme of finite type.  
	We note that $\GL(N, \bF_p)$ acts on $R_{\bF_p}$ via conjugation. Using Seshadri’s extension of geometric invariant  quotient  theory for schemes, we can take the GIT quotient of $R_{\bF_p}$ by $\GL(N, \bF_p)$, denoted by $M_{\bF_p}$. Then $M_{\bF_p}$ is also an affine $\bF_p$-scheme of finite type. Note that the $\bK$-points $M_{\bF_p}(\bK)$ are identified with the conjugacy classes of semisimple representations $\pi_1(X) \to \GL_N(\bK)$.   
	\medspace
	
	\noindent \textit{Case 1: $M_{\bF_p}$ is positive dimensional.} Since the morphism $\pi_p: R_{\bF_p} \to M_{\bF_p}$ is surjective between affine $\bF_p$-schemes of finite type, we can find an irreducible affine curve $C_o \subset R_{\bF_p}$ defined over $\bar{\bF}_p$ such that $\pi_p(C_o)$ is positive dimensional. Let $\overline{C}$ be the compactification of the normalization $C$ of $C_o$, and let $\{P_1,\ldots,P_\ell\}= \overline{C}\backslash C$. One can find a positive integer $m$ such that $\overline{C}$ is defined over $\bF_q$ with $q=p^m$, and $P_i\in \overline{C}(\bF_q)$ for each $i$. 
	
	By the universal property of the representation scheme, $C$ gives rise to a representation $\varrho_C:\pi_1(X)\to \GL_N(\bF_q[C])$, where $\bF_q[C]$ is the coordinate ring of $C$. Consider the discrete valuation $v_i:\bF_q(C) \to \bZ$ defined by $P_i$, where $\bF_q(C)$ is the function field of $C$. Let $\widehat{\bF_q(C)}_{v_i}$ be the completion of $\bF_q(C)$ with respect to $v_i$. Then we have $\big(\widehat{\bF_q(C)}_{v_i}, v_i\big) \simeq \big(\bF_q((t)), v\big)$, where $\big(\bF_q((t)), v\big)$ is the formal Laurent field of $\bF_p$ with the valuation $v$ defined by $v(\sum_{i=m}^{+\infty}a_i t^i) = \min \{i \mid a_i \neq 0\}$. Let $\varrho_i:\pi_1(X) \to \GL_N(\bF_q((t)))$ be the extension of $\varrho_C$ with respect to $\big(\widehat{\bF_q(C)}_{v_i}, v_i\big)$.
	
	\begin{claim}\label{lem:simple}
		There exists some $i \in \{1, \ldots, \ell\}$ such that $\varrho_i:\pi_1(X) \to \GL_N(\bF_q((t)))$ is unbounded.
	\end{claim}
	
	\begin{proof}
		Assume for the sake of contradiction that $\varrho_i$ is bounded for each $i$. Then after replacing $\varrho_i$ by some conjugation, we have $\varrho_i(\pi_1(X)) \subset \GL_N(\bF_q[[t]])$. For any matrix $A \in \GL_N(B)$ where $B$ is an $\bF_p$-algebra, we denote by $\chi(A)=T^N + \sigma_1(A)T^{N-1} + \cdots + \sigma_N(A)$ its characteristic polynomial with $\sigma_i(A)\in B$ the coefficients. Then $\sigma_j(\varrho_C(\gamma)) \in \bF_q[C]$ for every $\gamma \in \pi_1(X)$.  
		
		Since we have assumed that $\varrho_i(\pi_1(X)) \subset \GL_N(\bF_q[[t]])$ for every $i$, it follows that $\sigma_j(\varrho_i(\gamma)) \in \bF_q[[t]]$ for each $i \in \{1,\ldots,\ell\}$ and $j \in \{1,\ldots,N\}$. Therefore, by the definition of $\varrho_i$, $v_i\big(\sigma_j(\varrho_C(\gamma))\big) \geq 0$ for each $i$. It follows that $\sigma_j(\varrho_C(\gamma))$ extends to a regular function on $\overline{C}$, which is thus constant. This implies that for any $\{\eta_i:\pi_1(X) \to \GL_N(K_i)\}_{i=1,2}$ with    such that ${\rm char}\, K_i =   p$ and $\eta_i \in C(K_i)$, we have $\chi(\eta_1(\gamma)) = \chi(\eta_2(\gamma))$ for each $\gamma \in \pi_1(X)$. It yields $[\eta_1] = [\eta_2]$. Hence $\pi_p(C_o)$ is a point, leading to a contradiction.     
	\end{proof}
	
	\cref{lem:simple} together with \cref{sym form} imply the existence of non-trivial logarithmic symmetric differentials in $H^0(\overline{X}, \Sym^k\Omega_{\overline{X}}(\log \Sigma))$. We have thus proved the theorem when $M_{\bF_p}$ is positive dimensional. 
	
	\medspace
	
	\noindent \textit{Case 2: $M_{\bF_p}$ is zero dimensional.} We will prove that this case cannot occur. First, assume that $\tau:\pi_1(X) \to \GL_N(\bK)$ is semisimple. It follows that $\tau$ is conjugate to some $\varrho':\pi_1(X) \to \GL_N(\bar{\bF}_p)$. Since $\pi_1(X)$ is finitely generated, we have $\varrho'(\pi_1(X))\subset \GL_N(\bF_q)$ for some $q=p^m$.  Since $\GL_N(\bF_q)$ is a finite group, it follows that $\varrho'(\pi_1(X))$, hence $\tau(\pi_1(X))$, is finite. This leads to a contradiction. Hence the semisimplification  of $\tau$ must have finite image.  
	
	After replacing $X$ by a finite étale cover, we can assume that $\tau(\pi_1(X))$ is contained in the subgroup of strictly upper-triangular matrices in $\GL_N(\bK)$, which is a successive extension of $\mathbb{G}_{a,\bK}$. Hence $\tau(\pi_1(X))$ is a successive extension of finitely generated subgroups of $\mathbb{G}_{a,\bK}$, all of which are finite. It follows that $\tau(\pi_1(X))$ is finite, leading again to a contradiction. Thus, $M_{\bF_p}$ cannot be zero dimensional.
	
	The proof of the theorem is accomplished. 
\end{proof}

 

 \appendix

\section{Pluriharmonic maps from a quasi-projective surface}\label{sec:pluriharmonic}

In a series of remarkable papers \cite{Moc07b,Moc06}, Mochizuki  proves the existence of a  pluriharmonic metrics on flat vector bundles over smooth quasi-projective varieties.  These metrics correspond to infinite energy pluriharmonic  maps into  symmetric spaces of noncompact type   
by the Donaldson-Corlette theorem (cf.~\cite{Don87,Cor88}).   
The key step in Mochizuki's argument is to show that the harmonic metric over a quasi-projective surface is actually pluriharmonic.  The existence of pluriharmonic metrics on a  higher dimensional  smooth quasi-projective variety follows from an inductive argument on the dimension.  In this appendix, we generalize Mochizuki's argument to prove the following.  


\begin{thmx} \label{thm:pu}
	Let $(\overline{X},\Sigma)$ be a log smooth pair with $\dim \overline{X}=2$,  $Y$ be  a Riemannian manifold with strongly nonpositive curvature or a Euclidean building, and  $\rho:\pi_1(X) \rightarrow \mathsf{Isom}(Y)$ be an isometric action on $Y$. Endow $X$  with a Poincar\'e-type K\"ahler metric $g$ defined in \Cref{sec:metric}.  Then a  
	$\varrho$-equivariant harmonic  map $\tilde u:\widetilde X \rightarrow Y$ with logarithmic growth with respect to $g$  is pluriharmonic.
\end{thmx}

Note that  symmetric space of noncompact type has  strongly nonpositive curvature (cf.~\cite[Corollary 5.5]{Loh90}). Thus, Theorem~\ref{thm:pu} includes these cases which have   already been proved by Mochizuki (cf.~\cite[Proposition 11.20]{Moc06}).

The notion of harmonic maps of logarithmic energy growth has been discussed in \cite{DMrs} and \cite{DMks}.  Loosely speaking, this means that the energy density function of $u$ grows like $\frac{1}{r}$ along a disk transverse to a   $\Sigma$.  For the purpose of this appendix, it suffices to know that $u$ satisfies the energy estimates listed in Section~\ref{weneed}. We established this in \cite{DMks}. 

We will assume for the majority of the appendix that $(\overline{X},\Sigma)$ is  a log smooth pair with $\dim \overline{X}=2$, and that the target space $Y$ is either a Riemannian manifold $M$ of strongly nonpositive curvature or a Euclidean building $\Delta(G)$.   In Section~\ref{sec:pu} and Section~\ref{sec:bddm}, we treat the two cases $Y=M$ or $Y=\Delta(G)$ separately.

\subsection{Pairing of forms} We will use the following notation.  
Let   $M$  be a smooth Riemannian manifold and $TM\otimes \bC$ be its complexified tangent bundle.
For a smooth map $u: \widetilde X \rightarrow M$, let
$E := u^*(TM \otimes \bC)$.  Decompose the pullback of the Levi-Civita connection  as
\[
\nabla=\nabla'+\nabla''
\]
where 
\[
\nabla':C^{\infty}(E) \rightarrow \Omega^{1,0}(E), \ \ \ \nabla'':C^{\infty}(E) \rightarrow \Omega^{0,1}(E).
\]
In turn, $\nabla'$ and $\nabla''$ induce differential operators
\[
\partial_E: \Omega^{p,q}(E) \rightarrow \Omega^{p+1,q}(E), \ \ \
\bar \partial_E: \Omega^{p,q}(E) \rightarrow \Omega^{p,q+1}(E)
\]
where
\begin{eqnarray*}
	\partial_E(\phi \otimes s) & = &  \partial \phi \otimes s+(-1)^{p+q}\phi \otimes \nabla'_Es
	\\
	\bar \partial_E(\phi \otimes s) & = &  \bar \partial \phi \otimes s+(-1)^{p+q}\phi \otimes \nabla''_Es.
\end{eqnarray*}
Let $\{s^i\}$ be a  local frame of $E$.  For 
\begin{eqnarray*}
	\psi  =  \psi_i \otimes s^i \in \Omega^{p,q}(E) \ \mbox{and }  \
	\xi  =  \xi_i \otimes s^i \in \Omega^{p',q'}(E)
\end{eqnarray*} 
we set
\[ 
\{\psi, \xi \}= \langle s^i, s^j \rangle
\psi_i \wedge \bar \xi_j
\in \Omega^{p+q',q+p'}
\] 
where $ \langle \cdot, \cdot \rangle$ is the sesquilinear extension to $TM \otimes \bC$ of the Riemannian metric on $M$. 

\begin{rem} \label{globallydefined}
Consider the case when $\tilde u:\widetilde X \rightarrow Y=\Delta(G)$ is a harmonic map into a building.  Let $x \in \cR(u)$ and let $\mathcal N$ and $A$ be as in Definition~\ref{def:sing}.  Isometrically identify $\varphi:\mathbb R^N \simeq A$ and view the restriction $ u_\varphi:=\tilde u|_{\mathcal N}$ as a map into $\mathbb R^N$.  Thus $\bar \partial u_\varphi= \frac{\partial u_\varphi}{\partial \bar z^\alpha} d\bar z^\alpha$ is a $(0,1)$-form with values in $\bC^N$.  Note that $\bar \partial u_\varphi$ is independent of the choice of the isometric identification $A \simeq \mathbb R^N$ up to rotation.  Therefore, the $(1,1)$-form  
$\{\bar \partial u_\varphi, \bar \partial u_\varphi\}$ is independent of the choice of the isometric identification $\mathbb R^N \simeq A$.  For this reason, we  henceforth denote $\{\bar \partial u_\varphi, \bar \partial u_\varphi\}$ simply as $\{\bar \partial u, \bar \partial u\}$.  This function is well-defined on the regular set $\mathcal R(u)$ which is an open set in $\widetilde X$ of codimension 2.   By the local Lipschitz regularity of $\tilde{u}$, $\left|\{\bar \partial u, \bar \partial u\} \right|$ is an integrable function on any compact subdomain of $\widetilde X$, and we will henceforth interpret it as a locally $L^1$-function defined a.e.~on $\widetilde X$. 
\end{rem}

\subsection{Cut-off functions} \label{sec:cutoff}

Denote $\mathbb D_{z^i}$ to indicate that the complex coordinate $z^i$  parameterizes $\mathbb D$.

Let $P \in \Sigma_i \cap \Sigma_j$ for  $i \neq j$, and let $V_P$ be a neighborhood of $P$ containing no other crossings.
Choose holomorphic  trivializations $e_i$ (resp. $e_j$)  of $\sO _{\overline{X}}(\Sigma_i)$ (resp. $\sO _{\overline{X}}(\Sigma_j)$)  on 
$V_P$ and define $z^1$ (resp. $z^2$) by setting
$\sigma_i=z^1e_i, \mbox{ (resp. }\sigma_j=z^2e_j)
$. 
Let $h_j$ be a Hermitian metric  on $ \sO_{\overline{X}}(\Sigma_j)$ such that $|e_j|_{h_j}=1$ in $V_P$ for any crossing $P$.

Let $h$ be a Hermitian metric on $\overline{X}$,
not necessarily K\"ahler, such that the following holds:
\begin{itemize}
	\item[(i)] The metric $h$ is the Euclidean metric in a neighborhood $V_P$ of every crossing $P$, i.e.~
	\[
	h|_{V_P} =dz^1d\bar z^1+ dz^2d\bar z^2. 
	\]
	By rescaling $\sigma_1$ and $\sigma_2$ if necessary, we can assume without  loss of generality that 
	$\overline{\mathbb D}_{z^1} \times \overline{\mathbb D}_{z^2} \subset V_P.
	$
	\item[(ii)] The metric $h$ induces the orthogonal decomposition $T \overline{X}|_{\Sigma_j} = T \Sigma_j \oplus N \Sigma_j$ and  under the natural isomorphism 
	\[
	N \Sigma_j\simeq \sO _{\overline{X}}(\Sigma_j)|_{\Sigma_j},
	\]
	the restriction of $h$ to $N \Sigma_j$ is same as $ h_j$.
\end{itemize}

By scaling the metric $h$ if necessary, we can assume that the restriction of the exponential map  
\[
\exp : N \Sigma_j \subset T \overline{X}|_{\Sigma_j} \rightarrow \overline{X}
\] 
to 
	${\mathcal D}_j  =   \{\nu \in  N\Sigma_j: |\nu|_{h_j} < 1\} $
	defines a diffeomorphism.     We  identity $\mathcal D_j$  as a neighborhood of $\Sigma_j$ in $\overline{X}$; i.e.~$
	{\mathcal D}_j \simeq \exp({\mathcal D}_j) \subset \overline{X}.
	$
	Let $\mathcal D_j^*=\mathcal D \backslash \Sigma_j$.
	
	Fix a non-increasing, non-negative smooth function  $\eta:[0,\infty) \rightarrow [0,1]$ 
	satisfying
	\[
	\eta(x)=1 \mbox{ for } 0 \leq x \leq \frac{1}{2},  \ \ \eta(x) = 0 \mbox{ for } \frac{2}{3} \leq x <\infty.
	\]
	For $N \in {\mathbb N}$, define a cut-off function 
	\[
	\chi_N: X \rightarrow [0,1], \ \ \ \chi_N =\left\{
	\begin{array}{ll}
		\displaystyle{\prod_{j=1}^L  \eta \left(N^{-1}\log |\sigma_j|_{h_j}^{-2} \right) } & \mbox{in } \displaystyle{\bigcup_{j=1}^L {\mathcal D}_j^*}
		\\
		1 & \mbox{otherwise}.
	\end{array}
	\right.
	\]

	\subsection{Neighborhood of divisors} \label{sec:nbhd}
	 	We follow the notation of Sections~\ref{sec:metric} and~\ref{sec:cutoff}.
	The restriction of the normal bundle $N\Sigma_j \rightarrow \Sigma_j$ to $\mathcal D_j$ defines a disk bundle
	\begin{equation} \label{diskbundle}
		\pi_j : \mathcal D_j \rightarrow \Sigma_j.
	\end{equation}
	
	We now consider a finite collection of sets near the divisor 
	of the following two types:
	\begin{itemize}
		
		\item A  set   of type (A) admits a local unitary trivialization
		\begin{equation} \label{ref:typeA}
			\pi_j^{-1}(\Omega) \simeq  \Omega \times {\mathbb D}_{z^2}, \ \ 
		\end{equation}
		of $\pi_j:  \bar {\mathcal D}_j \rightarrow \Sigma_j$ where $\Omega \subset \Sigma_j$ is a contractible open subset of  $\Sigma_j$ containing no crossings.  
		With $\sigma_j$ the canonical section of $\sO_{\overline{X}}(\Sigma_j)$ as before, define a  function $\zeta$ on $\Omega \times \overline{\mathbb D}$ by
		$\sigma_j = \zeta e.
		$ 
		Thus, $\zeta$ is holomorphic with respect to  the complex structure on $\overline{X}$ and 
		$(\zeta, z^2)
		$
		define   holomorphic coordinates on  a set $\Omega \times \overline{\mathbb D}$ of type (A).
		\item  
		A set of type (B) 
		is a open set $\mathbb D_{z^1} \times \mathbb D_{z^2} \subset V_P
		$
	where $V_P$ be an  open set as in Section~\ref{sec:cutoff} containing a single crossing $P \in \Sigma_i \cap \Sigma_j$ ($i\neq j$).
	By the property (i) of the hermitian metric $h$ (cf.~Section~\ref{sec:cutoff}),  
	$(z^1,z^2)$ are {\it holomorphic coordinates} with respect to the   complex  structure on $\overline{X}$.
	Furthermore, with the identification
	$\overline{\mathbb D}_{z^1} \simeq \overline{\mathbb D}_{z^1} \times \{0\} \subset \Sigma_i$ (resp. $\overline{\mathbb D}_{z^2} \simeq  \{0\} \times \overline{\mathbb D}_{z^2}\subset \Sigma_j$),  
	$\pi_j^{-1}(\overline{\mathbb D}_{z^1}) \simeq \overline{\mathbb D}_{z^1} \times \overline{\mathbb D}_{z^2} \ \ \ (\mbox{resp. }\pi_i^{-1}(\overline{\mathbb D}_{z^2}) \simeq \overline{\mathbb D}_{z^1} \times \overline{\mathbb D}_{z^2})
	$ 
	is a local unitary trivialization of $\pi_j:  \overline{\mathcal D}_j \rightarrow \Sigma_j$ (resp. $\pi_i: \bar {\mathcal D}_i \rightarrow \Sigma_i$).  
\end{itemize}
\begin{dfn}
Fix a smooth K\"ahler metric $\overline{\omega}$ on $\overline{X}$. 	 We define  a  Kähler form on $\overline{X} \backslash \bigcup_{i \neq j} \Sigma_i$ by 
\begin{align}\label{gsigma}
	 	g_{\Sigma_j}:= C \overline{\omega}-\frac{\sqrt{-1}}{2} \sum_{i \neq j} \partial \bar{\partial} \log \log \left|\sigma_i\right|_{h_i}^{-2}.
\end{align} 
\end{dfn} 

Define $g_{\Sigma_j}$ to be the restriction to $\Sigma_j \backslash \bigcup_{i \neq j} \Sigma_i$ of the Kähler metric associated to this Kähler form. This is a smooth metric on $\Sigma_j$ away from the crossings.
We will use the following volume estimates for the Poincar\'e-type K\"ahler metric $g$ defined in \eqref{griffithsmetric}.  For more details, we refer to \cite[Section 3]{DMks}.
\begin{itemize}
	\item
	In a set of type (A), we write $z^2=re^{i\theta}$ in polar coordinates.  We have
	\begin{equation} \label{volcomp}
		d\mbox{vol}_g =d\mbox{vol}_P \left(1+O\left(\frac{1}{(-\log r^2+\alpha)^2} \right)\right)
	\end{equation}
	where $\alpha=\alpha(\zeta)$ is a smooth function. 
	\[
	d\mbox{vol}_P = d\mbox{vol}_{g_j}  \wedge \frac{dz^2\wedge d\overline{z}^2}{-2i r^2(-\log r^2+\alpha)^2} 
	\]
	and  $g_j$ is the restriction to $\Sigma_j$ of the K\"ahler metric $g_{\sigma_j}$ defined  in \eqref{gsigma}.
	\item In a set of type (B), we write $z^1=\varrho e^{i\phi}$ and $z^2=re^{i\theta}$ in polar coordinates.  We have
	\begin{equation} \label{volcompB}
		d\mbox{vol}_g =d\mbox{vol}_P \left(1+O\left(\frac{1}{(\log r^2)^2} \right) + O\left(\frac{1}{(\log r^2)^2} \right)\right),
	\end{equation}
	where 
	\[
	d\mbox{vol}_P = \frac{dz^1\wedge d\overline{z}^1}{-2i \varrho^2(\log \varrho^2)^2}  \wedge \frac{dz^2\wedge d\overline{z}^2}{-2ir^2(\log r^2)^2}.
	\]
\end{itemize}

\subsection{Energy estimates for harmonic maps of logarithmic growth} \label{weneed}

Let  $L_j$ be the translation length of $\varrho(\gamma_j)$ where $\gamma_j$ is the element of  $\pi_1(X)$ corresponding to a loop around the irreducible component $\Sigma_j$ of the divisor $\Sigma$.
Throughout this paper, the $\rho$-equivariant harmonic map $\tilde{u}$ in Theorem~\ref{thm:pu}  are assumed to satisfy the following estimates:
\begin{itemize}
	\item[(i)] In the set $\Omega \times {\mathbb D}^*_{\frac{1}{4}}$ away from a crossing 
	where $(z^1, \zeta=s e^{i\eta})$ are the holomorphic coordinates on $\Omega \times {\mathbb D}$, \begin{eqnarray*} \label{weneedA}
		\int_{\Omega \times \bar {\mathbb D}^*_{\frac{1}{4}} }  \left| \frac{\partial u}{\partial z^1} \right|^2 dz^1 \wedge d\bar z^1 \wedge \frac{d\zeta \wedge d\bar \zeta}{s^2(-\log s^2)^2} & < & \infty
		\\
		\int_{\Omega \times \bar {\mathbb D}^*_{\frac{1}{4}} }  \left( \left| \frac{\partial u}{\partial \zeta} \right|^2 - \frac{L_j}{16\pi s^2} \right) dz^1 \wedge d\bar z^1 \wedge d\zeta \wedge d\bar \zeta    & < & \infty
		\\
		\int_{\Omega \times \bar {\mathbb D}^*_{\frac{1}{4}} }  \left| \frac{\partial u}{\partial \zeta} \right|^2 dz^1 \wedge d\bar z^1 \wedge \frac{d\zeta \wedge d\bar \zeta}{(-\log s^2)^2}    & < & \infty
		\\
		\int_{\Omega \times \bar {\mathbb D}^*_{\frac{1}{4}} }   \left| \frac{\partial u}{\partial s} \right|^2  dz^1 \wedge d\bar z^1 \wedge d\zeta \wedge d\bar \zeta    & < & \infty
		\\
		\int_{\Omega \times \bar {\mathbb D}^*_{\frac{1}{4}} }
		\left(   \left| \frac{\partial u}{\partial \eta} \right|^2 -\frac{L_j^2}{4\pi} 
		\right) 
		dz^1 \wedge d\bar z^1 \wedge \frac{d\zeta \wedge d\bar \zeta}{s^2}  & < & \infty \\
		\int_{\Omega \times \overline{\mathbb D}^*_{\frac{1}{4}}}
		\left| \frac{\partial u}{\partial \eta} \right|^2 
		dz^1 \wedge d\bar z^1 \wedge \frac{d\zeta \wedge d\bar \zeta}{s^2(-\log s^2)^2} 
		& < & \infty.
	\end{eqnarray*}

	\item[(ii)] In the set  $\bar {\mathbb D}^*_{\frac{1}{4}}  \times  \bar {\mathbb D}^*_{\frac{1}{4}}$ at a crossing where $(z^1=\varrho e^{i\phi},z^2=re^{i\theta})$  are the holomorphic coordinates on ${\mathbb D} \times {\mathbb D}$:
	
	\begin{eqnarray*} \label{weneedB}
		\int_{\bar {\mathbb D}^*_{\frac{1}{4}}  \times  \bar {\mathbb D}^*_{\frac{1}{4}}}
		\left( 
		\left| \frac{\partial u}{\partial z^1} \right|^2 -  \frac{L_i^2}{16\pi \varrho^2}
		\right)  dz^1 \wedge d\bar z^1\wedge \frac{dz^2 \wedge d\bar z^2}{r^2 (-\log r^2)^2} & < & \infty 
		\\
		\int_{\bar {\mathbb D}^*_{\frac{1}{4}}  \times  \bar {\mathbb D}^*_{\frac{1}{4}}}
		\left(  \left| \frac{\partial u}{\partial z^2} \right|^2 -   \frac{L_j^2}{16\pi r^2} \right)  \frac{dz^1 \wedge d\bar z^1}{\varrho^2 (-\log \varrho^2)^2} \wedge dz^2 \wedge d\bar z^2 & < & \infty
		\\
		\int_{\bar {\mathbb D}^*_{\frac{1}{4}}  \times  \bar {\mathbb D}^*_{\frac{1}{4}}}
		\left| \frac{\partial u}{\partial \varrho} \right|^2  dz^1 \wedge d\bar z^1 \wedge \frac{dz^2 \wedge d\bar z^2}{r^2(-\log r^2)^2} & < & \infty
		\\
		\int_{\bar {\mathbb D}^*_{\frac{1}{4}}  \times  \bar {\mathbb D}^*_{\frac{1}{4}}}
		\left| \frac{\partial u}{\partial r} \right|^2  \frac{dz^1 \wedge d\bar z^1}{\varrho^2 (-\log \varrho^2)^2}  \wedge dz^2 \wedge d\bar z^2 & < & \infty
		\\
		\int_{\bar {\mathbb D}^*_{\frac{1}{4}}  \times  \bar {\mathbb D}^*_{\frac{1}{4}}}
		\left(  \left| \frac{\partial u}{\partial \phi} \right|^2 - \frac{L_j^2}{4\pi}
		\right)  \frac{dz^1 \wedge d\bar z^1}{\varrho^2 } \wedge \frac{dz^2 \wedge d\bar z^2}{r^2 (-\log r^2)^2} & < & \infty
		\\
		\int_{\bar {\mathbb D}^*_{\frac{1}{4}}  \times  \bar {\mathbb D}^*_{\frac{1}{4}}}
		\left( \left| \frac{\partial u}{\partial \theta} \right|^2 -  \frac{L_i^2}{4\pi}
		\right) 
		\frac{dz^1 \wedge d\bar z^1}{\varrho^2 (-\log \varrho^2)^2} \wedge \frac{dz^2 \wedge d\bar z^2}{r^2} & < & \infty.
	\end{eqnarray*}
	
\end{itemize}

\begin{rem}
	In \cite{DMks}, we constructed a $\rho$-equivariant harmonic map satisfying the above estimates (cf.~ \cite[Theorem 6.6 and Theorem 6.7]{DMks}) under the assumption that $\rho$ is {\it proper}; i.e.
	the sublevel sets of the function  $\delta: \tilde{X} \rightarrow [0,\infty)$ defined by
	\[
	\delta(P)=\max \{d(\rho(\lambda)P,P): \lambda \in \Lambda\}.
	\]  are bounded in $Y$.
\end{rem}

\subsection{Technical results}

\label{sec:appendix}

We will prove the  technical results needed in the proof of Theorem~\ref{thm:pu}.  The  arguments presented here are similar to those contained in \cite{Moc07b}. We  include all the details  for the sake of completeness.

\begin{lem} \label{lemma:prelim}
	Let $V=\bar {\mathbb D}_{\frac{1}{4}}^*  \times  \bar {\mathbb D}_{\frac{1}{4}}^*$ be a set  at  a crossing (cf.~Section~\ref{weneed}~(ii))
	and  $(z^1=\varrho e^{i\phi},z^2=r e^{i\theta})$ be holomorphic coordinates in $V$.  
	If   $\{F_N\}_{N=1}^{\infty}$ is a sequence of functions defined on $V$ satisfying the following:
	\begin{itemize}
		\item[(a)] $\displaystyle{|F_N(z^1,z^2)| \leq \frac{c}{(-\log r^2)^2}}$ for some constant $c>0$ independent of $N$,
		\item[(b)] $\displaystyle{ c_0:=\int_V   F_N(z^1,z^2)
			\frac{dz^1 \wedge d\bar z^1}{\varrho^2} \wedge \frac{dz^2 \wedge d\bar z^2}{r^2}}$ is  independent of $N$, and 
		
		\item[(c)] for any  $z^2 \in {\mathbb D}_{\frac{1}{4}}^*$ with $|z^2|=r$, 
		$F_N(z^1,z^2)=0$  for $N$ sufficiently large,
	\end{itemize}
	then
	\[
	\lim_{N \rightarrow \infty}  \int_V F_N(z^1,z^2) \left| \frac{\partial u}{\partial z^1}\right|^2 dz^1 \wedge d\bar z^1 \wedge \frac{dz^2 \wedge d\bar z^2}{r^2} =\frac{c_0 L_i^2}{16\pi}.
	\]
\end{lem}

\begin{proof}
	We first rewrite
	\begin{eqnarray} 
		\lefteqn{
			\int_V F_N(z^1,z^2) \left| \frac{\partial u}{\partial z^1}\right|^2 dz^1 \wedge d\bar z^1 \wedge \frac{dz^2 \wedge d\bar z^2}{r^2}
		} \nonumber 
		\\
		& = &  
		\frac{L_i^2}{16\pi} \int_V   F_N(z^1,z^2)
		\frac{dz^1 \wedge d\bar z^1}{\varrho^2} \wedge \frac{dz^2 \wedge d\bar z^2}{r^2}
		\nonumber \\
		&  & +\int_V F_N(z^1,z^2) \left( \left| \frac{\partial u}{\partial z^1} \right|^2 -
		\frac{L_i^2}{16\pi \varrho^2} \right) dz^1 \wedge d\bar z^1 \wedge \frac{dz^2 \wedge d\bar z^2}{r^2}. 
	\end{eqnarray}
	The first term is equal to $\frac{c_0 L_i^2}{16\pi}$  by assumption (b).
	For the second term of (\ref{int-(i)}), we first rewrite the integral as
	\[
	\int_0^{\frac{1}{4}} \left( \int_{{\mathbb D}^*_{\frac{1}{4}} }\int_0^{2\pi}F_N(z^1,z^2) \left(   \left| \frac{\partial u}{\partial z^1} \right|^2 -\frac{L_i^2}{16\pi \varrho^2} 
	\right) \varrho d\phi \wedge \frac{dz^2 \wedge d\bar z^2}{-2ir^2}  \right)d\varrho.
	\]
	By assumption (a), the integral inside the bracket, i.e.~the function
	\begin{equation}  \label{fncFN'}
		r \mapsto \int_{{\mathbb D}^*_{\frac{1}{4}} }\int_0^{2\pi}F_N(z^1,z^2) \left(   \left| \frac{\partial u}{\partial z^1} \right|^2 -\frac{L_i^2}{16\pi \varrho^2} 
		\right) \varrho d\phi \wedge \frac{dz^2 \wedge d\bar z^2}{-2ir^2},
	\end{equation}
	is bounded from above (independently of $N$) by a non-negative function
	\[
	r \mapsto c \int_{{\mathbb D}^*_{\frac{1}{4}} }\int_0^{2\pi} \left(   \left| \frac{\partial u}{\partial z^1} \right|^2 -\frac{L_i^2}{16\pi \varrho ^2}
	\right)  \varrho d\phi \wedge \frac{dz^2 \wedge d\bar z^2}{-2ir^2(-\log r^2)^2}.
	\]
	The above is non-negative by the definition of $L_i$ and integrable over the interval $[0,\frac{1}{4}]$ by Section~\ref{weneed}~(ii).
	Furthermore, 
	the function (\ref{fncFN'}) converges to 0 for each  $r \in (0,\frac{1}{4})$ by assumption (c).  Thus,   Lebesgue's dominated  convergence theorem implies   the result.
\end{proof}


\begin{proposition} \label{integrability}
	If $\{\chi_N\}$ is the sequence of cut-off functions defined in Section~\ref{sec:cutoff}, then
	\begin{equation} \label{ibp1}
		\lim_{N\rightarrow \infty} \int_X \partial \overline{\partial} \chi_N \wedge \{ \bar\partial u, \bar\partial u \}<\infty.
	\end{equation}
\end{proposition}

\begin{proof}

	Let $V$  be either a set $\Omega \times {\mathbb D}^*_{\frac{1}{4}}$ away from the crossings (cf.~Section~\ref{weneed}~(i)) or a set 
	$\bar {\mathbb D}_{\frac{1}{4}}^*  \times  \bar {\mathbb D}_{\frac{1}{4}}^*$  at  a crossing (cf.~Section~\ref{weneed}~(ii)).
	Since   $\partial \bar \partial \chi_N$  is supported in the finite union of such sets for sufficiently large $N$,  it suffices to prove 
	\begin{equation} \label{ibp}
		\lim_{N\rightarrow \infty} \int_{V} \partial \overline{\partial} \chi_N \wedge \{ \bar\partial u, \bar\partial u \}<\infty
	\end{equation}
	for either    $V=\Omega \times {\mathbb D}^*_{\frac{1}{4}}$ or $V=\bar {\mathbb D}_{\frac{1}{4}}^*  \times  \bar {\mathbb D}_{\frac{1}{4}}^*$.
	Throughout this proof of (\ref{ibp}), we will use $c$ to denote a generic positive constant that may change from line to line but is independent of $N \in {\mathbb N}$.
	
	First, consider   the subset
	$V=\bar {\mathbb D}_{\frac{1}{4}}^*  \times  \bar {\mathbb D}_{\frac{1}{4}}^*$ near  a crossing with local holomorphic coordinates $(z^1=\varrho e^{i\phi},z^2=r e^{i\theta})$.  In $V$ and  for $N$ sufficiently large,   
	\begin{eqnarray*}
		\chi_N(z^1,z^2)  
		& = & \eta \left(-N^{-1} \log \varrho^2 \right) \eta \left(-N^{-1} \log r^2 \right). 
	\end{eqnarray*}
	The support of $\eta'\left(-N^{-1} \log \varrho^2 \right) $ and $\eta''\left(-N^{-1} \log \varrho^2 \right) $ is contained in 
	\begin{equation} \label{sptphi1}
		W_N:=\left\{\frac{1}{2} \leq -N^{-1} \log \varrho^2 \leq \frac{2}{3}\right\} 
	\end{equation}
	and the support of $\eta'\left(-N^{-1} \log r^2 \right) $ and $\eta''\left(-N^{-1} \log r^2 \right)$ is contained in 
	\begin{equation} \label{sptphi2}
		V_N:=\left\{\frac{1}{2} \leq -N^{-1} \log r^2 \leq \frac{2}{3}\right\}.
	\end{equation}
	Therefore,
	\begin{eqnarray} 
		(-\log \varrho^2) \left| \frac{\eta'(-N^{-1} \log \varrho^2)}{N} 
		\right|
		\leq  c, & &  (-\log r^2)\left| \frac{\eta'(-N^{-1} \log r^2)}{N} 
		\right|
		\leq c \nonumber  \\
		(-\log \varrho^2)^2 \left| \frac{\eta'' (-N^{-1} \log \varrho^2)}{N^2} \right| \leq  c,
		& & 
		(-\log r^2)^2 \left| \frac{\eta'' (-N^{-1} \log r^2)}{N^2} \right| \leq  c.
		\label{cleverbd!} 
	\end{eqnarray}
	We have
	\begin{eqnarray}
		\partial \overline{\partial} \chi_N & = &  \eta(-N^{-1}\log \varrho^2)\eta''(-N^{-1}\log r^2) \frac{dz^2 \wedge d\overline{z}^2}{N^2r^2}
		\nonumber 
		\\
		& & 
		\
		+
		\eta''(-N^{-1}\log \varrho^2) \eta(-N^{-1}\log r^2)\frac{dz^1 \wedge d\overline{z}^1}{N^2 \varrho^2}
		\nonumber
		\\
		& & \ \ +\eta'(-N^{-1}\log \varrho^2) \eta'(-N^{-1}\log r^2) \frac{dz^1 \wedge d\overline{z}^2}{N^2 z^1\overline{z}^2}
		\nonumber 
		\\
		& & \ \ \ +\eta'(-N^{-1}\log r^2) \eta'(-N^{-1}\log \varrho^2) \frac{dz^2 \wedge d\overline{z}^1}{N^2 \overline{z}^1z^2}.
		\label{sum'}
	\end{eqnarray}
	Using  (\ref{sum'}),  we write the integral of  (\ref{ibp}) as the sum $(i)+(ii)+(iii)+(iv)$
	where
	\begin{eqnarray*}
		(i) &= & 
		\int_V  \eta(-N^{-1}\log \varrho^2)\eta''(-N^{-1}\log r^2) \frac{dz^2 \wedge d\overline{z}^2}{N^2r^2}   \wedge \{ \bar\partial u, \bar\partial u \}\\
		(ii) &= & 
		\int_V  \eta''(-N^{-1}\log \varrho^2)\eta(-N^{-1}\log r^2) \frac{dz^1 \wedge d\overline{z}^1}{N^2\varrho^2}   \wedge \{ \bar\partial u, \bar\partial u \}\\
		(iii) &= & 
		\int_V  \eta'(-N^{-1}\log \varrho^2)\eta'(-N^{-1}\log r^2) \frac{dz^1 \wedge d\overline{z}^2}{N^2z^1\overline{z}^2}   \wedge \{ \bar\partial u, \bar\partial u \}
		\\
		(iv) &= & 
		\int_V  \eta'(-N^{-1}\log \varrho^2)\eta'(-N^{-1}\log r^2) \frac{dz^2 \wedge d\overline{z}^1}{N^2\overline{z}^1z^2}   \wedge \{ \bar\partial u, \bar\partial u \}.
	\end{eqnarray*}
	
	First, consider the integral $(i)$.  Using the identity
	\[ 
	\langle  \frac{\partial  u}{\partial{ \bar z^\alpha}},  \frac{\partial  u}{ {\partial{\bar  z^\beta}}} \rangle  d\bar z^\alpha \wedge dz^\beta = h_{i\bar j}  \frac{\partial  u^i}{\partial{ \bar z^\alpha}} 
	\overline{
		\frac{\partial  u^j}{ {\partial{\bar  z^\beta}}} 
	}
	d\bar z^\alpha \wedge dz^\beta =\{\bar \partial u, \bar \partial u\},
	\]
	we have
	\begin{equation}   \label{int-(i)}
		(i) =  \int_V  
		\eta(-N^{-1}\log \varrho^2) \frac{\eta''(-N^{-1}\log r^2)}{N^2} 
		\left| \frac{\partial u}{\partial z^1} \right|^2 dz^1\wedge d\bar z^1 \wedge \frac{dz^2 \wedge d\bar z^2}{r^2}.
	\end{equation}
	We now check that  
	\[
	F_N(z^1,z^2) = \eta(-N^{-1}\log \varrho^2) \frac{\eta''(-N^{-1}\log r^2)}{N^2}
	\]
	satisfies the assumptions (a), (b) and (c) of  Lemma~\ref{lemma:prelim}.
	First, $F(z^1,z^2)$ satisfies assumption (a) of Lemma~\ref{lemma:prelim} by (\ref{cleverbd!}).
	Next, we will check that the function $F_N(z^1,z^2)$ also satisfies assumption (b) of Lemma~\ref{lemma:prelim}.
	Indeed, after  a change of variables, 
	\begin{equation} \label{stvar}
		t=-N^{-1}\log \varrho \ \mbox{ and } \ s=-N^{-1}\log r,
	\end{equation}
	we obtain
	\[
	\frac{dz^1 \wedge d\overline{z}^1}{N\varrho^2} =-2i \frac{d\varrho \wedge d\phi}{N\varrho} = 2i dt \wedge d\phi \ \mbox{ and } \ \frac{dz^2 \wedge d\overline{z}^2}{Nr^2} = -2i \frac{dr \wedge d\theta}{Nr} =  2i ds \wedge d\theta.
	\]
	Thus, 
	\begin{eqnarray*}  \label{int-(i)1}
		\lefteqn{  c_0:=
			\int_V 
			\eta(-N^{-1}\log \varrho^2)  \frac{\eta''(-N^{-1}\log r^2)}{N^2}  \frac{dz^1 \wedge d\overline{z}^1}{\varrho^2} \wedge \frac{d\overline{z}^2 \wedge dz^2}{r^2}
		} 
		\nonumber  \\
		& = & c \int_0^{\frac{1}{3}} \eta(2t) dt \int_{\frac{1}{4}}^{\frac{1}{3}}  \eta''(2s) ds= c \int_0^{\frac{1}{3}} \eta(2t) dt \cdot \left(  \eta'(\frac{2}{3}) - \eta'(\frac{1}{2}) \right) =0.
	\end{eqnarray*}
	Finally,  $F_N(z^1,z^2)$ satisfies assumption (c) of Lemma~\ref{lemma:prelim} by (\ref{sptphi2}).
	By applying Lemma~\ref{lemma:prelim}, we conclude
	\begin{equation} \label{ito0}
		\lim_{N \rightarrow \infty}|(i)| =0.
	\end{equation}
	The same argument also implies
	\[
	\lim_{N \rightarrow \infty}|(ii)| =0.
	\]
	We will now bound  the term $(iii)$.    Indeed, we can rewrite
	\begin{eqnarray}
		|(iii)|&= &
		\left| \int_V \frac{ \eta'(-N^{-1}\log \varrho^2)}{N}\frac{\eta'(-N^{-1}\log r^2)}{N} \frac{dz^1 \wedge d\overline{z}^2}{z^1 \overline{z}^2}   \wedge \{ \bar\partial u, \bar\partial u \} \right|
		\nonumber  \\
		&\leq   & 
		\int_V
		\left|  \langle \frac{\partial u}{\partial \overline{z}^1}, \frac{\partial  u}{\partial \overline{z}^2}\rangle  \right| 
		\frac{ \eta'(-N^{-1}\log \varrho^2)}{N}\frac{\eta'(-N^{-1}\log r^2)}{N}  
		\frac{dz^1 \wedge d\overline{z}^1}{\varrho} \wedge \frac{dz^2 \wedge d \bar z^2}{r} 
		\nonumber  \\
		& \leq &   \int_{V_N}
		\left|  \frac{\partial u}{\partial \overline{z}^1}\right|^2
		\left( \frac{\eta'(-N^{-1}\log \varrho^2)}{N}  \right)^2
		dz^1 \wedge d\overline{z}^1 \wedge \frac{dz^2 \wedge d \bar z^2}{r^2}
		\nonumber  \\
		& & \ \ +  \int_{W_N}
		\left| \frac{\partial  u}{\partial \overline{z}^2} \right|^2 
		\left( \frac{ \eta'(-N^{-1}\log r^2)}{N}\right)^2  
		\frac{dz^1 \wedge d\overline{z}^1}{\varrho^2} \wedge dz^2 \wedge d \bar z^2.
		\label{iiiint}
	\end{eqnarray}
	
	For the first integral on the right hand side of (\ref{iiiint}), we let
	\[
	F_N(z^1,z^2) =\chi_{V_N}
	\left( \frac{ \eta'(-N^{-1}\log r^2)}{N}\right)^2 
	\]
	where $\chi_{V_N}$ is the characteristic function of $V_N$.
	First,  $F_N(z^1,z^2)$ satisfies assumption (a) of Lemma~\ref{lemma:prelim} by (\ref{cleverbd!}),.
	Next, we check that it satisfies assumption (b) of Lemma~\ref{lemma:prelim}.  Indeed, using the substitution (\ref{stvar}), 
	\begin{eqnarray*}
		c_0 & := &  \int_V \chi_{V_N} \left( \frac{\eta'(-N^{-1}\log r^2)}{N}  \right)^2 \frac{dz^1 \wedge d\overline{z}^1}{\varrho^2} \wedge \frac{dz^2 \wedge d \bar z^2}{r^2} 
		\\
		& = &  c   \int_{\frac{1}{4}}^{\frac{1}{3}} dt \  \int_{\frac{1}{4}}^{\frac{1}{3}}   (\eta'(2s))^2  ds.
	\end{eqnarray*}
	Finally,  $F_N(z^1,z^2)$ satisfies assumption (c) of Lemma~\ref{lemma:prelim} by (\ref{sptphi2}).
	Thus, the second integral on the right hand side of (\ref{iiiint}) limits to $\frac{c_0L_i^2}{16\pi}$ as $N \rightarrow \infty$ by Lemma~\ref{lemma:prelim}.  Analogously, the second integral on the right hand side of (\ref{iiiint}) limits to $\frac{c_0L_j^2}{16\pi}$ as $N \rightarrow \infty$. 
	Thus, we have shown 
	\begin{equation} \label{iii0}
		\lim_{N \rightarrow \infty}|(iii)| \leq \frac{c_0(L_i^2 +L_j^2)}{16\pi}.
	\end{equation}
	Same argument shows 
	\[
	\lim_{N \rightarrow \infty}|(iv)| \leq \frac{c_0(L_i^2 +L_j^2)}{16\pi}.
	\]
	Summing the limits of $(i)$, $(ii)$, $(iii)$ and $(iv)$, we conclude that (\ref{ibp}) is satisfied  in the case   $V=\bar {\mathbb D}_{\frac{1}{4}}^*  \times  \bar {\mathbb D}_{\frac{1}{4}}^*$.

	Next, consider  set $V=\Omega \times {\mathbb D}^*_{\frac{1}{4}}$ away from the crossings with holomorphic coordinates  $(z^1,\zeta=r e^{i\theta})$.   
	In $V$ and for sufficiently large $N$, 
	\begin{equation}\label{setb0} 
		\chi_N(z^1,\zeta) =\eta \left(N^{-1} \log b|\zeta|^{-2} \right). 
	\end{equation}
	We compute
	\begin{eqnarray}
		\partial \overline{\partial} \chi_N & = & \frac{\eta''(N^{-1} \log b|\zeta|^{-2})}{N^2} \left(\frac{d\zeta \wedge d\overline{\zeta}}{|\zeta|^2}
		+  \frac{\partial b \wedge \overline{\partial} b}{b^2}- \frac{\partial b \wedge d\overline{\zeta}}{b \bar \zeta}- \frac{d\zeta \wedge \overline{\partial} b}{\zeta b}\right)\nonumber \\
		&  &  \ +\frac{\eta'(N^{-1} \log b|\zeta|^{-2})}{N}  \partial \overline{\partial} \log b. \label{sum}
	\end{eqnarray}
	The support of $\eta'$ and $\eta''$ is contained in 
	\begin{eqnarray*} 
		\left\{\frac{1}{2} \leq N^{-1} \log b|\zeta|^{-2} \leq \frac{2}{3}\right\},
	\end{eqnarray*}
	which  is contained in the set
	\begin{equation}\label{sprt}
		V_N=\Omega \times {\mathbb D}_{z^2,c_1e^{-\frac{N}{3}}, c_2e^{-\frac{N}{4}}}
	\end{equation}
	for appropriate constants $c_1$ and $c_2$ depending only on $b$. 
	Therefore,
	\begin{eqnarray} 
		\left| \frac{\eta'(N^{-1} \log b|\zeta|^{-2})}{N} 
		\right|
		& \leq &  \frac{c}{\log br^{-2}}
		\leq  \frac{c}{-\log r^2}, \label{cleverbd1}\\
		\left| \frac{\eta'' (N^{-1} \log b|\zeta|^{-2})}{N^2} \right|& \leq & \frac{c}{(\log br^{-2})^2}
		\leq  \frac{c}{(-\log r^2)^2}.
		\label{cleverbd2} 
	\end{eqnarray}
	Using  (\ref{sum}),  we write the integral of  (\ref{ibp}) as the sum $(I)+(II)+(III)+(IV)+(V)$.  For the integral $(I)$, we write
	\begin{eqnarray*}
		|(I)| 
		&  = &   
		\left|
		\int_V \frac{\eta'' (N^{-1} \log b|\zeta|^{-2})}{N^2}  \frac{d\zeta \wedge d\overline{\zeta}}{|\zeta|^2} \wedge \{ \bar\partial u, \bar\partial u \}   
		\right|
		\\
		& \leq &
		c\int_{V_N}  \left| \frac{\partial u}{\partial \bar z^1} \right|^2  dz^1 \wedge d\bar z^1 \wedge \frac{d\zeta \wedge d\bar \zeta}{r^2 (-\log r^2)^2}\ \ \ (\mbox{by (\ref{cleverbd2}))}.
	\end{eqnarray*}
	By Section~\ref{weneed}~(i), 
	\[
	\int_V     \left| \frac{\partial     u}{\partial \bar z^1} \right|^2  dz^1 \wedge d\bar z^1 \wedge \frac{d\zeta \wedge d\bar \zeta}{r^2 (-\log r^2)^2}<\infty.
	\]
	Thus, Lebesgue's dominated  convergence Theorem implies
	\begin{equation} \label{I0}
		\lim_{N \rightarrow \infty} (I) = 0.
	\end{equation}
	For the integral $(II)$, we write
	\begin{eqnarray*}
		|(II)| 
		& = &
		\left|   \int_V \frac{\eta'' (N^{-1} \log b|\zeta|^{-2})}{N^2} \frac{\partial b \wedge \overline{\partial} b}{b^2} \wedge \{ \bar\partial u, \bar\partial u \}  \right| 
		\\
		& \leq & c   \int_{V_N}  \left|\frac{\partial u}{\partial \overline{z}^1}\right |^2  dz^1 \wedge d\bar z^1 \wedge \frac{d\zeta \wedge d\bar \zeta}{(-\log r^2)^2}+ \int_{V_N} \left |\frac{\partial u}{\partial \overline{\zeta}}\right |^2  dz^1 \wedge d\bar z^1 \wedge \frac{d\zeta \wedge d\bar \zeta}{(-\log r^2)^2}  \\
		& &  \ \ \  (\mbox{by}  \ \frac{\partial b}{b}=O(1)\ \mbox{and }  (\ref{cleverbd2})).
	\end{eqnarray*}
	By Section~\ref{weneed}~(i), we can apply an analogous argument to (\ref{I0}) to conclude
	\[
	\lim_{N \rightarrow \infty} (II)=0.
	\] 
	In order to estimate $(III)$, notice that
	\begin{eqnarray*} \label{cancel1}
		d\overline{\zeta} \wedge \{\bar \partial u, \bar\partial u \}=d\overline{\zeta} \wedge\left(\left|\frac{\partial u}{\partial \overline{z}^1} \right|^2d\overline{z}^1\wedge d{z}^1
		+ \langle \frac{\partial u}{\partial \overline{z}^1}, \frac{\partial u}{\partial \overline{\zeta}}\rangle d\overline{z}^1\wedge d\zeta \right).
	\end{eqnarray*}
	Thus,
	\begin{eqnarray*}
		|(III)| & = &  \left|     \int_V \frac{\eta'' (N^{-1} \log b|\zeta|^{-2})}{N^2} \frac{\partial b \wedge d\overline{\zeta}}{b\overline{\zeta}} \wedge \{\bar \partial u, \bar\partial u \}     \right|    
		\\
		& \leq & 
		c     \int_{V_N} \left(\left|\frac{\partial u}{\partial \overline{z}^1} \right|^2
		+\left| \langle \frac{\partial u}{\partial \overline{z}^1}, \frac{\partial u}{\partial \bar\zeta}\rangle \right| \right) dz^1 \wedge d\bar z^1 \wedge \frac{d\zeta \wedge d\bar \zeta}{r(-\log r^2)^2} 
		\\
		& & 
		\ \ \  ({\mbox{by}  \ \frac{\partial b}{b}=O(1) \mbox{ and } (\ref{cleverbd2}))}\\
		\\
		& = & 
		c     \int_{V_N}   r \left(\left|\frac{\partial u}{\partial \overline{z}^1} \right|^2
		+\left| \langle \frac{\partial u}{\partial \overline{z}^1}, \frac{\partial u}{\partial \bar\zeta} \rangle\right| \right)\ dz^1 \wedge d\bar z^1 \wedge \frac{d\zeta \wedge d\bar \zeta}{r^2(-\log r^2)^2} 
		\\
		& \leq & 
		c     \int_{V_N}  \left(  \left|\frac{\partial u}{\partial \overline{z}^1} \right|^2 
		+  r^2\left| \frac{\partial u}{\partial \bar\zeta} \right|^2 \right)dz^1 \wedge d\bar z^1 \wedge \frac{d\zeta \wedge d\bar \zeta}{r^2(-\log r^2)^2} \ \   \mbox{(by Cauchy-Schwartz)}.
	\end{eqnarray*}
	By Section~\ref{weneed}~(i), we can apply an analogous argument to (\ref{I0}) to conclude
	\[
	\lim_{N \rightarrow \infty} (III)=0.
	\] 
	Similarly,  
	\[
	\lim_{N \rightarrow \infty} (IV)=0.
	\] 
	We thus conclude
	\[
	\lim_{N \rightarrow \infty} |(I)|+|(II)|+|(III)|+|(IV)|=0.
	\]
	Next,
	\begin{eqnarray*}
		(V) & = &
		\int_V \frac{\eta'(N^{-1} \log b|\zeta|^{-2})}{N}   \partial  \overline{\partial} \log b \wedge \{ \bar\partial u, \bar\partial u \}
		\\   
		& = &
		\int_V \frac{\eta'(N^{-1} \log b|\zeta|^{-2})}{N}   \frac{\partial^2 \log b} { \partial  z^1  \partial \overline{z}^1} dz^1 \wedge d{\bar z}^1 \wedge \left|\frac{\partial u}{\partial \bar \zeta} \right|^2  d \bar\zeta \wedge d{\zeta}\\
		& & \ +
		\int_V \frac{\eta'(N^{-1} \log b|\zeta|^{-2})}{N}   \frac{\partial^2 \log b} { \partial \zeta  \partial \bar\zeta} d\zeta \wedge d{\bar \zeta} \wedge \left|\frac{\partial u}{\partial \bar z^1} \right|^2  d \overline{z}^1 \wedge dz^1\\
		& & \ \ +
		\int_V \frac{\eta'(N^{-1} \log b|\zeta|^{-2})}{N}   \frac{\partial^2 \log b} { \partial z^1  \partial \bar\zeta} dz^1 \wedge d{\bar \zeta} \wedge <\frac{\partial u}{\partial \bar z^1},  \frac{\partial u}{\partial \bar \zeta}>  d \overline{z}^1 \wedge d\zeta\\
		& & \ \ \ +
		\int_V \frac{\eta'(N^{-1} \log b|\zeta|^{-2})}{N}   \frac{\partial^2 \log b} { \partial \zeta  \partial  \overline{z}^1} d \zeta \wedge d \overline{z}^1 \wedge <\frac{\partial u}{\partial \bar \zeta}, \frac{\partial u}{\partial \overline{z}^1}>  d \bar\zeta \wedge d{z}^1\\
		& =: & (V)_1+(V)_2+(V)_3+(V)_4.
	\end{eqnarray*}
	We estimate
	\begin{eqnarray*}
		|(V)_2| 
		& \leq  &  c
		\int_V
		\left|  \frac{\eta'(N^{-1} \log b|\zeta|^{-2})}{N}  \right| \left| \frac{\partial^2 \log b} { \partial  z^1  \partial \overline{z}^1} \right| \left|\frac{\partial u}{\partial \bar z^1} \right|^2 dz^1 \wedge   d \overline{z}^1 \wedge d\zeta \wedge d\bar \zeta\nonumber 
		\\
		& \leq  &  c
		\int_{V_N}
		\left|\frac{\partial u}{\partial \bar z^1} \right|^2 dz^1 \wedge   d \overline{z}^1 \wedge \frac{d \zeta \wedge d\bar \zeta}{(-\log r^2)}\nonumber \\
		&& \left(\mbox{by} \  \frac{\partial^2 \log b} { \partial \zeta  \partial \bar \zeta}=O(1) \ \mbox{ and }  (\ref{cleverbd1})\right)
		\\
		|(V)_3| & \leq & 
		\int_V \left| \frac{\eta'(N^{-1} \log b|\zeta|^{-2})}{N}\right|  \left| \frac{\partial^2 \log b} { \partial z^1  \partial \bar\zeta} \right| \left| \frac{\partial u}{\partial \bar z^1}\right|\left| \frac{\partial u}{\partial \bar \zeta}\right| dz^1 \wedge   d \overline{z}^1 \wedge d{\bar \zeta} \wedge d\zeta \\
		& \leq & c
		\int_{V_N} 
		\frac{1}{(-\log r^2)}\left|\frac{\partial u}{\partial \bar z^1} \right|\left|\frac{\partial u}{\partial \bar \zeta} \right| dz^1 \wedge   d \overline{z}^1 \wedge d{\bar \zeta} \wedge d\zeta \nonumber 
		\\
		&& \left(\mbox{by}  \ \frac{\partial^2 \log b} { \partial \zeta  \partial \bar \zeta}=O(1) \ \mbox{and } \  (\ref{cleverbd1})\right)\\
		& \leq   &  c
		\int_{V_N}
		\left( \left|\frac{\partial u}{\partial \bar z^1} \right| ^2+\frac{1}{(-\log r^2)^2} \left|\frac{\partial u}{\partial \bar \zeta} \right|^2 \right) dz^1 \wedge   d \overline{z}^1 \wedge d\zeta \wedge d\bar \zeta\nonumber 
	\end{eqnarray*}
	and similarly 
	\begin{eqnarray*}
		|(V)_4|  & \leq   &  c
		\int_{V_N}
		\left( \left|\frac{\partial u}{\partial z^1} \right| ^2+\frac{1}{(-\log r^2)^2} \left|\frac{\partial u}{\partial \zeta} \right|^2 \right) dz^1 \wedge   d \overline{z}^1 \wedge d\zeta \wedge d\bar \zeta.   \end{eqnarray*}
	With these estimates, we can argue as in  the proof of (\ref{I0}) to  conclude
	\[
	\lim_{N \rightarrow \infty} (V)_2+(V)_3+(V)_4=0.
	\]
	We are left  to compute
	\[
	(V)_1 =
	\int_V \frac{\eta'(N^{-1} \log b|\zeta|^{-2})}{N}   \frac{\partial^2 \log b} { \partial  z^1  \partial \overline{z}^1} dz^1 \wedge d{\bar z}^1 \wedge \left|\frac{\partial u}{\partial \bar \zeta} \right|^2  d \bar\zeta \wedge d{\zeta}.
	\]   
	First,  use the identity
	\[
	\frac{\partial^2 \log b} {  \partial z^1  \partial \overline{z}^1}(z^1,\zeta)=\frac{\partial^2 \log b} { \partial z^1  \partial \overline{z}^1}(z^1,0)+O(r)
	\]
	to write 
	\[
	(V)_1=(V)_{1a}+(V)_{1b}.
	\]
	We estimate 
	\begin{eqnarray*}
		|(V)_{1b}|& = &   \int_V \left| \frac{\eta'(N^{-1} \log b|\zeta|^{-2})}{N}  \right|  \left|\frac{\partial u}{\partial \bar \zeta} \right|^2O(\zeta) dz^1 \wedge d{\bar z}^1 \wedge   d \zeta \wedge d \bar \zeta\\   
		& \leq  &  c
		\int_{V_N}  
		\frac{r}{(-\log r^2)}\left|\frac{\partial u}{\partial \bar \zeta} \right|^2dz^1 \wedge d{\bar z}^1 \wedge   d \zeta \wedge d \bar \zeta \  \mbox{ (by (\ref{cleverbd1})}).
	\end{eqnarray*}
	Thus, we can argue as in  the proof of (\ref{I0}) to  conclude 
	\[
	\lim_{N \rightarrow \infty} (V)_{1b} =0.
	\]
	Furthermore,
	\begin{eqnarray*}
		(V)_{1a} & = & 
		\int_V \frac{\eta'(N^{-1} \log b|\zeta|^{-2})}{N}   \frac{\partial^2 \log b} { \partial z^1  \partial \overline{z}^1}(z^1,0) dz^1 \wedge d{\bar z}^1 \wedge \left|\frac{\partial u}{\partial \bar \zeta} \right|^2  d \bar\zeta \wedge d{\zeta}
		\\
		& = &
		\int_V \frac{\eta'(N^{-1} \log b|\zeta|^{-2})}{N}   \frac{\partial^2 \log b} { \partial z^1  \partial \overline{z}^1}(z^1,0) dz^1 \wedge d{\bar z}^1 \wedge \left(\left|\frac{\partial u}{\partial \bar \zeta} \right|^2- \frac{L_j^2}{16\pi r^2}\right) d \bar\zeta \wedge d{\zeta}
		\\
		&  &
		\ \ + \  \frac{L_j^2}{4\pi}  \int_V \frac{\eta'(N^{-1} \log b|\zeta|^{-2})}{Nr^2}    \frac{\partial^2 \log b} { \partial z^1  \partial \overline{z}^1}(z^1,0) dz^1 \wedge d{\bar z}^1 \wedge d \bar\zeta \wedge d{\zeta}.
	\end{eqnarray*}
	The first  term on the right hand side above can be estimated by 
	\begin{eqnarray*}
		\lefteqn{
			\left| \int_V \frac{\eta'(N^{-1} \log b|\zeta|^{-2})}{N}    \frac{\partial^2 \log b} { \partial z^1  \partial \overline{z}^1}(z^1,0) \left(\left|\frac{\partial u}{\partial \bar \zeta} \right|^2-\frac{L_j^2}{4\pi r^2}\right) dz^1 \wedge d{\bar z}^1 \wedge  d \bar\zeta \wedge d{\zeta}  \right|
		}
		\\
		& \leq  &  c
		\int_{V_N}  
		\left(\left|\frac{\partial u}{\partial \bar \zeta} \right|^2-\frac{L_j^2}{4\pi r^2}\right) dz^1 \wedge d{\bar z}^1 \wedge   \frac{d \bar\zeta \wedge d{\zeta}}{(-\log r^2)} \  \left(\mbox{by}  \ \frac{\partial^2 \log b} { \partial z^1  \partial \overline{z}^1} =O(1) \ \mbox{and} \  (\ref{cleverbd1})\right).
	\end{eqnarray*} 
	With these estimates, we can argue as in  the proof of (\ref{ito0}) to  conclude \begin{eqnarray*}
		\lim_{N \rightarrow \infty} (V)_{1} & = &  \lim_{N \rightarrow \infty} (V)_{1a} + (V)_{1b} \\
		& = & \frac{L_j^2}{4\pi}  \lim_{N \rightarrow \infty}\int_V \frac{\eta'(N^{-1} \log b|\zeta|^{-2})}{Nr^2}   \frac{\partial^2 \log b} { \partial z^1  \partial \overline{z}^1}(z^1,0)dz^1 \wedge d{\bar z}^1\wedge d \bar\zeta \wedge d{\zeta}\\
		&=&\frac{L_j^2}{4\pi}  \int_{\Omega} \frac{\partial^2 \log b} { \partial z^1  \partial \overline{z}^1}(z^1,0) d{z}^1\wedge d{\bar z}^1   \cdot \lim_{N \rightarrow \infty} \int_{\overline{{\mathbb D}}^*_{\frac{1}{4}}}
		\frac{\eta'(N^{-1} \log b|\zeta|^{-2})}{Nr^2}  d \bar\zeta \wedge d{\zeta}\\
		&=&\frac{L_j^2}{4\pi i} \int_{\Omega} \Theta(\sO_{\overline{X}}(\Sigma_j))
		\cdot  \lim_{N \rightarrow \infty}  \int_0^{\frac{1}{4}}
		\frac{\eta'(-N^{-1} \log br^2)}{Nr}dr\\
		&=&\frac{L_j}{4\pi i} \int_{\Omega} \Theta(\sO_{\overline{X}}(\Sigma_j)).
	\end{eqnarray*}
	In the above $ \Theta(\sO_{\overline{X}}(\Sigma_j))$  denotes the curvature of the hermitian metric $h_j$ on the line bundle $\sO_{\overline{X}}(\Sigma_j)$. The estimates for $(I)$, $(II)$, $(III)$, $(IV)$ and $(V)$ imply that (\ref{ibp}) also holds for $V=\Omega \times {\mathbb D}^*_{\frac{1}{4}}$ away from the crossings.
\end{proof}



\begin{proposition} \label{abc}
	Assume
	\begin{equation} \label{hessianintegrable}
	\int_X |\partial_E \bar \partial u|^2 <\infty.
	\end{equation}
	If  $\{\chi_N\}$ is the sequence of cut-off functions defined in Section~\ref{sec:cutoff}, then
	\[
	\lim_{N \rightarrow \infty}  \int_X d \chi_N \wedge  \{\bar\partial_{}\partial u, \partial u -\overline{\partial} u\}   = 
	0.
	\]
\end{proposition}


\begin{proof}
	Let $V$  be either a set $\Omega \times {\mathbb D}^*_{\frac{1}{4}}$ away from the crossings (cf.~Section~\ref{weneed}~(i)) or a set 
	$\bar {\mathbb D}_{\frac{1}{4}}^*  \times  \bar {\mathbb D}_{\frac{1}{4}}^*$  at  a crossing (cf.~Section~\ref{weneed}~(ii)).  Since the support of $d\chi_N$ is covered by such a set $V$, it is sufficiently to prove
	\begin{equation} \label{limitis0}
		\lim_{N \rightarrow \infty}  \int_V d \chi_N \wedge  \{\bar\partial_{}\partial u, \partial u -\overline{\partial} u\}   = 
		0.
	\end{equation}
	Thus, the rest of the proof is devoted to proving (\ref{limitis0}).  For the sequel, the constant $c>0$ is an arbitrary constant independent of the parameter $N$.  
	First, consider the set
	$V=\bar {\mathbb D}_{\frac{1}{4}}^*  \times  \bar {\mathbb D}_{\frac{1}{4}}^*$ at  a crossing with local holomorphic coordinates $(z^1=\varrho e^{i\phi},z^2=r e^{i\theta})$ (cf.~Section~\ref{weneed}~(ii)).
	We have
	\begin{eqnarray*}
		\partial u - \overline{\partial} u  
		& = &
		\left( \frac{\partial u}{\partial z^1 }dz^1- \frac{\partial u}{\partial \overline{z}^1 }d\overline{z}^1 \right) + \left( \frac{\partial u}{\partial z^2 }dz^2- \frac{\partial u}{\partial \overline{z}^2 }d\overline{z}^2 \right)
		\\
		& = &    i \left(   \frac{\partial u}{\partial \varrho} \varrho d\phi - \frac{\partial u}{\partial \phi} \frac{d\varrho}{\varrho} \right)+ i \left(   \frac{\partial u}{\partial r} r d\theta - \frac{\partial u}{\partial \theta} \frac{dr}{r} \right)
	\end{eqnarray*}
	and
	\begin{eqnarray*}
		d\chi_N & = &  -\eta(-N\log \varrho^2) \frac{\eta'(-N^{-1}\log r^2)}{N}  \frac{2dr}{r}  -\frac{\eta'(-N^{-1}\log \varrho^2)}{N}  \eta(-N\log r^2)  \frac{2d\varrho}{\varrho}.
	\end{eqnarray*}
	Thus,
	\begin{eqnarray} \label{goto00}
		\lefteqn{
			\int_V d \chi_N \wedge  \{\bar\partial_{}\partial u, \partial u -\overline{\partial} u\} }
		\nonumber \\
		& = &   
		- \frac{2}{N} \int_V   \eta(-N^{-1}\log \varrho^2) \eta'(-N^{-1}\log r^2)  \frac{dr}{r} \wedge \{ \partial_{} \bar \partial u, \frac{\partial u}{\partial z^1 }dz^1- \frac{\partial u}{\partial \overline{z}^1 }d\overline{z}^1  \} 
		\nonumber \\
		& & 
		\      - \frac{2i}{N}  \int_V  \eta(-N^{-1}\log \varrho^2) \eta'(-N^{-1}\log r^2) \frac{dr}{r}  \wedge \{ \partial_{} \bar \partial u,   \frac{\partial u}{\partial r} rd\theta \}  \nonumber \\
		& & 
		\ \ \  -\frac{2}{N}  \int_V  \eta'(-N^{-1}\log \varrho^2) \eta(-N^{-1}\log r^2)   \frac{d\varrho}{\varrho} \wedge \{ \partial_{} \bar \partial u,  \frac{\partial u}{\partial z^2 }dz^2- \frac{\partial u}{\partial \overline{z}^2 }d\overline{z}^2 \} 
		\nonumber  \\
		& & \ \ \ \  - \frac{2i}{N}    \int_V  \eta'(-N^{-1}\log \varrho^2) \eta(-N^{-1}\log r^2) \frac{d\varrho}{\varrho}  \wedge \{ \partial_{} \bar \partial u,   \frac{\partial u}{\partial \varrho} \varrho d\phi \}  \\
		& = &(i)+(ii)+(i')+(ii'). \nonumber 
	\end{eqnarray}
	
	We will show that all the terms $(i)$, $(ii)$, $(i')$ and $(ii')$ go to 0 as $N \rightarrow \infty$. We start with $(i)$.
	Note that  $|\eta(-N^{-1}\log \varrho^2)|$ has  support in $\varrho \geqslant e^{-\frac{N}{3}}$ and $|\eta'(-N^{-1}\log r^2)|$ has  support in $ e^{-\frac{N}{3}} \leq r \leq e^{-\frac{N}{4}}$  (cf.~(\ref{sptphi1})). Thus, the integrand of $(i)$ has support in 
	\[
	D_N:=  {\mathbb D}_{z^2,e^{-\frac{N}{3}}, \frac{1}{4}} \times {\mathbb D}_{z^1,e^{-\frac{N}{3}}, e^{-\frac{N}{4}}}.
	\]
	We estimate
	\begin{eqnarray}
		\lefteqn{|(i)|
			\leq 
			c  \int_{D_N} \left|  \frac{ \eta'(-N^{-1}\log r^2)}{N} \right|   |\partial_{} \bar \partial u|  \left| \frac{\partial u}{\partial z^1}\right|  \varrho d\varrho \wedge d\phi \wedge \frac{r dr \wedge d\theta}{r}
		}
		\nonumber   \\
		& \leq &
		c \left(\int_{D_N}  |\partial_{} \bar \partial u|^2  dz^1 \wedge d\bar z^1 \wedge 
		dz^2 \wedge d\bar z^2
		\right)^{\frac{1}{2}} \nonumber
		\\
		& & \ \times 
		\left(  \int_{D_N}  \left( \frac{ \eta'(-N^{-1}\log r^2)}{N} \right)^2 \left| \frac{\partial u}{\partial z^1}\right|^2 dz^1 \wedge d\bar z^1\wedge \frac{dz^2 \wedge d\bar z^2}{r^2}
		\right)^{\frac{1}{2}} \nonumber\\
		&& \ \ \ \ \ \ \mbox{ (by Cauchy-Schwartz and (\ref{cleverbd!}))}. 
		\label{sades}
	\end{eqnarray}
	The first integral above limits to 0 as $N \rightarrow \infty$ by assumption~(\ref{hessianintegrable}), volume estimate (\ref{volcompB}) and  Lebesgue's dominated convergence theorem.  The limit as $N \rightarrow \infty$ of the second integral exists by Lemma~\ref{lemma:prelim} by  following the proof  of (\ref{iii0}). Thus
	$
	\lim_{N \rightarrow \infty} (i) =0.
	$
	An analogous argument shows $
	\lim_{N \rightarrow \infty} (i') =0.
	$

	Next,  \begin{eqnarray*}
		|(ii)| 
		& \leq &    
		c \int_{V}   \left| \frac{ \eta'(-N^{-1}\log r^2)}{N} \right| | \bar\partial_{}\partial u|
		\left| \frac{\partial u}{\partial r} \right|
		dz^1  \wedge d\bar z^1 \wedge
		\frac{dz^2 \wedge d\bar z^2}{r} 
		\\
		& \leq &    
		c \left(  \int_{D_N} 
		| \bar\partial_{}\partial u|^2 
		dz^1  \wedge d\bar z^1 \wedge
		\frac{dz^2 \wedge d\bar z^2}{r^2 (-\log r^2)^2}
		\right)^{\frac{1}{2}}  
		\\
		&  & \ \times
		\left(  \int_{D_N}  
		\left|\frac{\partial u}{\partial r} \right|^2 
		dz^1  \wedge d\bar z^1 \wedge
		dz^2 \wedge d\bar z^2 
		\right)^{\frac{1}{2}}\\
		&  & (\mbox{by Cauchy-Schwartz and (\ref{cleverbd!})}).
	\end{eqnarray*}
	The first integral limits to 0 as $N \rightarrow \infty$ by  assumption~(\ref{hessianintegrable}), volume estimate (\ref{volcompB}) and  Lebesgue's dominated   convergence Theorem.  The second integral also limits to 0 by Section~\ref{weneed}~(ii) and Lebesgue's dominated  convergence theorem. Thus, $
	\lim_{N \rightarrow \infty} (ii) =0$, and an analogous argument shows 
	$\lim_{N \rightarrow \infty} (ii') =0$.

	Next, consider a set $V=\Omega \times {\mathbb D}^*_{\frac{1}{4}}$ away from the crossings   with holomorphic coordinates  $(z^1,\zeta=r e^{i\theta})$.  
	Since
	\[
	\partial u - \overline{\partial} u =  \left( \frac{\partial u}{\partial z^1 }dz^1- \frac{\partial u}{\partial \overline{z}^1 }d\overline{z}^1 \right)+ i \left(   \frac{\partial u}{\partial r} r d\theta - \frac{\partial u}{\partial \theta} \frac{dr}{r} \right),
	\]
	we have
	\begin{eqnarray} \label{goto0}
		\lefteqn{
			\int_X d \chi_N \wedge  \{\bar\partial_{}\partial u, \partial u -\overline{\partial} u\} }
		\\
		& = &   
		i \int_Xd\chi_N \wedge \{ \bar\partial_{}\partial u, \frac{\partial u}{\partial r} r d\theta \} 
		+
		i \int_Xd\chi_N \wedge \{ \bar\partial_{}\partial u,  \frac{\partial u}{\partial \theta}  \frac{dr}{r} \} 
		\nonumber  \\
		& & \ +   \int_Xd\chi_N \wedge \{ \bar\partial_{}\partial u, \frac{\partial u}{\partial z^1 }dz^1- \frac{\partial u}{\partial \overline{z}^1 }d\overline{z}^1 \}  \nonumber \\
		& = & (I)+(II)+(III) \nonumber
	\end{eqnarray}
	where  the integrals $(I)$, $(II)$, and $(III)$ are estimated below. Let
	\[
	G_N:=\Omega \times{\mathbb D}_{z^1,c_1e^{-\frac{N}{3}}, c_2e^{-\frac{N}{4}}}.
	\]
	Since, 
	\begin{eqnarray*}
		d\chi_N 
		& = & -\frac{\eta'(N^{-1}\log br^{-2})}{N} \left( \frac{2dr}{r} - \frac{db}{b} \right), \end{eqnarray*}
	integral $(I)$ is bounded by
	\begin{eqnarray*}
		\left| (I) \right| 
		& = & 
		\left|  \int_V  \frac{\eta'(N^{-1}\log br^{-2})}{N} \left( \frac{2dr}{r} - \frac{db}{b}    \right) \wedge \{ \bar\partial_{}\partial u, \frac{\partial u}{\partial r} r d\theta \}  \right|  
		\\
		& \leq &c \int_V   |\overline{\partial}_{} \partial u| \left| \frac{\partial u}{\partial r} \right| \varrho d\varrho \wedge d\phi \wedge \frac{r dr \wedge d\theta}{r (-\log r^2)}  \ \  \left(\mbox{since} \ \frac{db}{b}=O(1) \right)
		\\
		& \leq  & c \left(  \int_{G_N} |\overline{\partial}_{} \partial u|^2  d z^1 \wedge d\bar z^1 \wedge  \frac{d \zeta \wedge d\bar \zeta}{r^2 (-\log r^2)^2} \right)^{\frac{1}{2}} \\
		& & \ \ \times  \left( \int_{G_N}   \left| \frac{\partial u}{\partial r} \right|^2 
		d z^1 \wedge d\bar z^1 \wedge d\zeta \wedge d\bar \zeta  \right)^{\frac{1}{2}} \ \  \mbox{(by Cauchy-Schwartz)}.
	\end{eqnarray*}
	The first integral limits to 0 by assumption~(\ref{hessianintegrable}), volume estimate (\ref{volcomp}) and Lebesgue's dominated  convergence theorem.  The second integral also limits to 0 by Section~\ref{weneed}~(i) (with $s=r$) and  Lebesgue's dominated  convergence theorem. Thus, $
	\lim_{N \rightarrow \infty} (I) =0$.
	
	Next, we estimate $(II)$.  {\it This is the term for which  the modified Siu's  Bochner formula is crucial.}  Indeed, we hightlight the cancellation $\frac{dr}{r} \wedge \frac{dr}{r}=0$ below:
	\begin{eqnarray*}
		|(II)| 
		& =& 
		\left|  \int_V \frac{\eta'(N^{-1}\log br^{-2})}{N} \left( \frac{2dr}{r} - \frac{db}{b}  \right) \wedge \{ \bar\partial_{}\partial u, \frac{\partial u}{\partial \theta} \frac{dr}{r} \}  \right|
		\\
		& \leq &c \int_V  \left| \frac{\eta'(N^{-1}\log br^{-2})}{N} \right| |\overline{\partial}_{} \partial u| \left| \frac{\partial u}{\partial \theta} \right|  \varrho d\varrho \wedge d\phi \wedge \frac{r dr \wedge d\theta}{r} \\
		& & \ \ \  \ \left(\mbox{since} \ \frac{db}{b}=O(1)  \mbox{ and } \frac{dr}{r} \wedge \frac{dr}{r}=0\right)
		\\   & \leq & 
		c \left( \int_{G_N}   | \bar\partial_{}\partial u|^2  dz^1 \wedge d\bar z^1 \wedge d\zeta \wedge d\bar \zeta \right)^{\frac{1}{2}}
		\\
		& & \ \ \times \left(\int_{G_N}    \left| \frac{\partial u}{\partial \theta} \right|^2 dz^1 \wedge d\bar z^1 \wedge \frac{d\zeta \wedge d\bar \zeta}{r^2 (-\log r^2)^2} \right)^{\frac{1}{2}}  \   \mbox{ (by Cauchy-Schwartz)}.
	\end{eqnarray*}
	The first integral limits to 0 by assumption~(\ref{hessianintegrable}), volume estimate (\ref{volcomp}) and  Lebesgue's dominated  convergence theorem.  The second integral also limits to 0 by Section~\ref{weneed}~(i) (with $r=s$ and $\theta=\eta$) and Lebesgue's dominated  convergence theorem. Thus, $
	\lim_{N \rightarrow \infty} (II) =0$.
	
	Finally, 
	\begin{eqnarray*}
		|(III)| 
		& = & 
		\left|  \int_V  \frac{\eta'(N^{-1}\log br^{-2})}{N} \left( \frac{2dr}{r} - \frac{db}{b} \right) \wedge
		\{ \bar\partial_{}\partial u, \frac{\partial u}{\partial z^1 }dz^{1}- \frac{\partial u}{\partial \overline{z}^1 }d\overline{z}^1 \}  \right|
		\\
		& \leq & 
		c\int_{G_N}  | \bar\partial_{}\partial u| \left|  \frac{\partial u}{\partial z^1}\right| dz^1 \wedge d\bar z^1 \wedge \frac{d\zeta \wedge d\bar \zeta}{r (-\log r^2)} \\
		&&  
		\ \ \  \left(\mbox{since} \ \frac{db}{b}=O(1) \ \mbox{and by} \ (\ref{cleverbd1}) \right)
		\\  & \leq & 
		c \left( \int_{G_N}   | \bar\partial_{}\partial u|^2 dz^1 \wedge d\bar z^1 \wedge d\zeta \wedge d\bar \zeta \right)^{\frac{1}{2}} 
		\\
		& & \ \times \left( \int_{G_N} \left|  \frac{\partial u}{\partial z^1}\right|^2   dz^1 \wedge d\bar z^1 \wedge \frac{d\zeta \wedge d\bar \zeta}{r^2 (-\log r^2)^2} \right)^{\frac{1}{2}} \ \ \ \  \mbox{ (by Cauchy-Schwartz)}.
	\end{eqnarray*}
	The first integral limits to 0 by  assumption~(\ref{hessianintegrable}), volume estimate (\ref{volcomp}) and  Lebesgue's dominated  convergence theorem.  The second integral also limits to 0 by Section~\ref{weneed}~(i) (with $r=s$) and  Lebesgue's dominated  convergence theorem. Thus, $
	\lim_{N \rightarrow \infty} (III) =0$.

	We now conclude that  (\ref{goto0}) $\rightarrow 0$ as $N \rightarrow \infty$, which combined with the fact that (\ref{goto00}) $\rightarrow 0$ as $N \rightarrow \infty$
	implies (\ref{limitis0}).  This concludes the proof of Lemma~\ref{abc}.
\end{proof}

\subsection{Proof of Theorem~\ref{thm:pu} (I)} \label{sec:pu}
In this section, we let $Y$ be a Riemannian manifold with strongly nonpositive curvature.

\begin{lem} \label{integrable}
	Assume that the harmonic map $\tilde{u}$ of Theorem~\ref{thm:pu} maps into a Riemannian manifold $M$  with strongly nonpositive curvature.  Then
	\[
	\int_X \left|\partial_{E} \bar \partial u \right|^2  \omega^2 < \infty.
	\]
\end{lem}

\begin{proof}
	The  Siu-Sampson's Bochner formula  (cf.~\cite{Sam85}) is 
	\begin{eqnarray}
		\partial  \bar \partial \{\bar \partial  u, \bar \partial u \}
		& = &    2\left(\left|\partial_{E} \bar \partial u \right|^2 +Q_0\right) \omega^2 \label{sampson} 
	\end{eqnarray}
	where
	\begin{equation} \label{qnot}
		Q_0=-2g^{\alpha \bar \delta} g^{\gamma \bar \beta} R_{ijkl}
		\frac{\partial u^i}{\partial z^\alpha}  \frac{\partial u^k}{\partial \bar z^\beta}  \frac{\partial u^j}{\partial z^\gamma}\frac{\partial u^l}{\partial \bar z^\delta} \geq 0.
	\end{equation}
	In the expression for $Q_0$, we use local coordinates $(z^\alpha)$ of $X$ and $(y^i)$ of $Y$.     If $Y=\Delta(G)$ is a building, then (\ref{sampson}) is valid for $x \in \cR(u)$ with $Q_0=0$. 
	Multiply by $\chi_N$, integrate it over $X$, and 
	apply integration by parts to conclude
	\[
	2 \int_X\left( \left|\partial_{E} \bar \partial u \right|^2 +Q_0 \right) \chi_N \omega^2  =
	\int_X    \partial  \bar \partial \{\bar \partial u, \bar \partial u \} \chi_N
	\\
	=
	\int_X  \{\bar \partial u, \bar \partial u \}  \wedge \partial  \bar \partial \chi_N.
	\]
	The limit of the right hand side above as $N \rightarrow \infty$ is bounded by  Proposition~\ref{integrability}.  This proves Lemma~\ref{integrable}.
\end{proof}

 We are now in position to finish the proof of Theorem~\ref{thm:pu} when $Y=M$ is a Riemannian manifold of strongly nonpositive curvature. To do so, we need the following variation of the Siu-Sampson-Mochizuki Bochner formula  for a harmonic map $u: \widetilde X \rightarrow M$ found in \cite{DMIn}: 
\[
\left(4\left|{\partial}_{E} \bar \partial u\right|^2+Q_0   \right)  \omega^2 = d \{\bar \partial_{E}  \partial u,  \bar \partial u -  \partial u\}.
\]
where $Q_0$ as in (\ref{qnot}). 
By Lemma~\ref{integrable}, we can  integrate the above equality and apply Proposition~\ref{abc}.  Thus, we obtain
\begin{eqnarray*}
	\int_X \left(4\left|{\partial}_{E} \bar \partial u\right|^2+Q_0   \right)  \omega^2  & = & \int_X d  \{ \bar\partial \partial u, \partial u -\overline{\partial} u\} \\
	& = & \lim_{N \rightarrow \infty} \int_X \chi_N d  \{ \bar\partial \partial u, \partial u -\overline{\partial} u\}
	\\
	& = & -\lim_{N \rightarrow \infty} \int_X d\chi_N \wedge  \{ \bar\partial \partial u, \partial u -\overline{\partial} u\}
	\\
	& = & 0.
\end{eqnarray*}
Since $Q_0\geq 0$ by assumption, 
$Q_0=|\partial_{E} \bar  \partial u|=0$.  Thus, we conclude $\partial_{E} \bar  \partial u=0$;  in other words, $u$ is pluriharmonic.

\subsection{Proof of Theorem~\ref{thm:pu} (II)} \label{sec:bddm}

In this section, we let $Y$ be a  Euclidean building $\Delta(G)$.  Unlike Section~\ref{sec:pu}, special care must be taken because of the presence of the singular set.  
\begin{lem} \label{Bintegrable'}
	For  $\chi_N:X \rightarrow [0,1]$ as in Section~\ref{sec:cutoff},
	\[
	\int_X \partial \bar \partial \{ \bar \partial u, \bar \partial u\}  \chi_N 
	= \int_X   \{ \bar \partial u, \bar \partial u\} \wedge \bar \partial \partial \chi_N.
	\]
\end{lem}

\begin{proof}
	Let $\Omega_1$ be the support of $\chi_N$ which is relatively compact.	With $\psi_i$ defined as in Theorem~\ref{gs}, we have
	\begin{eqnarray*}
		\lefteqn{ \int_X \partial \bar \partial \{ \bar \partial u, \bar \partial u\}  \chi_N
			\psi_i }
		\\
		& = & \int_X \bar \partial \{ \bar \partial u, \bar \partial u\} \wedge \partial (\chi_N
		\psi_i)
		\\
		& = & \int_X (\bar \partial \{ \bar \partial u, \bar \partial u\} \wedge \partial \chi_N
		)  \psi_i + \int_X (\bar \partial \{ \bar \partial u, \bar \partial u\} \wedge \partial \psi_i)  \chi_N
		\\
		& = & -\int_X  \left( \{ \bar \partial u, \bar \partial u\} \wedge \bar \partial \partial \chi_N
		\right)  \psi_i + \int_X \{ \bar \partial u, \bar \partial u\} \wedge \partial \chi_N
		\wedge \bar \partial \psi_i + \int_X \left( \bar \partial \{ \bar \partial u, \bar \partial u\} \wedge \partial \psi_i  \right) \chi_N
		.
	\end{eqnarray*}
	Furthermore, there exists a constant $C>0$ depending only on the Lipschitz constant of  $\chi_N$   such that 
	\begin{eqnarray*}
		\left|  \int_X \{ \bar \partial u, \bar \partial u\} \wedge \partial \chi_N \wedge \bar \partial \psi_i \right| & \leq & C\int_{\Omega_1} |\nabla u|^2 |\nabla \psi_i| \omega^2,
	\end{eqnarray*}
	
	\begin{eqnarray*}
		\left| \int_X \left( \bar \partial \{ \bar \partial u, \bar \partial u\} \wedge \partial \psi_i  \right) \chi_N \right| \leq  C\int_{\Omega_1} |\nabla \nabla u| |\nabla \psi_i| \omega^2.
	\end{eqnarray*} 
	Thus, the assertion follows from letting $i \rightarrow \infty$ and applying Theorem~\ref{gs}.
\end{proof}

\begin{lem} \label{integrable'}
	For the harmonic map $\tilde{u}$ of Theorem~\ref{thm:pu}, 
	\[
	\int_X \left|\partial \bar \partial u \right|^2  \omega^2 < \infty.
	\]
\end{lem}

\begin{proof}
	The  Siu-Sampson's Bochner formula  (cf.~\cite{Sam85}) is simply
	\begin{eqnarray*}
	    2\left|\partial \bar \partial u \right|^2  \omega^2 = 	\partial  \bar \partial \{\bar \partial  u, \bar \partial u \}. 
	\end{eqnarray*}
		Multiply by $\chi_N$, integrate it over $X$, and 
	apply Lemma~\ref{Bintegrable'} to conclude
	\[
	2 \int_X\left|\partial \bar \partial u \right|^2 \chi_N \omega^2  =
	\int_X    \partial  \bar \partial \{\bar \partial u, \bar \partial u \} \chi_N
	\\
	=
	\int_X  \{\bar \partial u, \bar \partial u \}  \wedge \partial  \bar \partial \chi_N.
	\]
	The limit of the right hand side above as $N \rightarrow \infty$ is bounded by  Proposition~\ref{integrability}.  This proves Lemma~\ref{integrable}.
\end{proof}

\begin{lem} \label{Bpluri'}
	For  $\chi_N:X \rightarrow [0,1]$ as in Section~\ref{sec:cutoff}, 
	\[
	- \int_X \chi_N d  \{ \bar\partial \partial u, \partial u -\overline{\partial} u\} = 
	\int_X d \chi_N \wedge  \{ \bar\partial \partial u, \partial u -\overline{\partial} u\}.
	\]
\end{lem}

\begin{proof}
	Let $\Omega_1$ be the support of $\chi_N$ which is relatively compact.	With $\psi_i$ defined as in Theorem~\ref{gs}, we have
	\begin{eqnarray*}
		- \int_X \chi_N \psi_i d  \{ \bar\partial \partial u, \partial u -\overline{\partial} u\} 
		& = &  
		\int_X \psi_i d \chi_N \wedge  \{ \bar\partial \partial u, \partial u -\overline{\partial} u\} +  \int_X \chi_N d\psi_i  \wedge  \{ \bar\partial \partial u, \partial u -\overline{\partial} u\}.
	\end{eqnarray*}
	Thus,  there exists a constant $C>0$ depending only on the Lipschitz constant of $u$  in the support of $\chi_N$ such that
	\[
	\left| \int_X \chi_N d\psi_i  \wedge  \{ \bar\partial \partial u, \partial u -\overline{\partial} u\} \right| \leq  C \int_{\Omega_1} |\nabla \nabla u| \, |\nabla \psi_i|.
	\]
  The assertion follows from letting $i \rightarrow \infty$ and applying Theorem~\ref{gs}.
\end{proof}

 We are now in position to finish the proof of Theorem~\ref{thm:pu} when $Y=\Delta(G)$ is a Euclidean building.
The  Siu-Sampson-Mochizuki Bochner formula  in this case is simply 
\[
	4\left|{\partial}\bar \partial u\right|^2 \omega^2 = d \{\bar \partial \partial u,  \bar \partial u -  \partial u\}
\]
which holds for the harmonic map $u:X \rightarrow \Delta(G)$ in the regular set $\mathcal R(u)$.  By Lemma~\ref{integrable'}, we can integrate this formula  to conclude
\begin{eqnarray*}
	4\int_X  \left|{\partial} \bar \partial u\right|^2 \omega^2 & = & \int_X d  \{ \bar\partial \partial u, \partial u -\overline{\partial} u\} \\
	& = & \lim_{N \rightarrow \infty} \int_X \chi_N d  \{ \bar\partial \partial u, \partial u -\overline{\partial} u\}
	\\
	& = & -\lim_{N \rightarrow \infty} \int_X d\chi_N \wedge  \{ \bar\partial \partial u, \partial u -\overline{\partial} u\}
	\\
	& = & 0.
\end{eqnarray*}
Here the  third equality follows from \cref{Bpluri'} and the  last equality   is due to Lemma~\ref{integrable'} and Proposition~\ref{abc}.    From this, we conclude that 
${\partial} \bar \partial u=0$ a.e.~on the regular set $\mathcal R(u)$ of $u$.  

To show that $u$ is smooth near every point $p \in \mathcal R(u)$, let   $\Omega \subset \mathcal R(u)$ be a neighborhood of $p$ such that $u$ maps $\Omega$ into an apartment $A \simeq \bR^N$ of $\Delta(G)$ and let $\phi \in C^{\infty}_c(\Omega)$.  For a sequence $\{\psi_i\}$ as in Theorem~\ref{gs}, we have
\[
\lim_{i \rightarrow \infty} \int_\Omega \phi  \ \partial \psi_i \wedge \bar \partial u  \, \omega =0
\]
and thus
\begin{eqnarray*}
	0 = \lim_{i \rightarrow \infty} \int_\Omega (\phi \psi_i) \, \partial \bar \partial u \ \omega 
	= - \lim_{i \rightarrow \infty} \int_\Omega (\phi \partial  \psi_i + \psi_i \partial \phi)\, \wedge \bar \partial u \ \omega
	= -  \int_\Omega   \partial \phi\, \wedge \bar \partial u \ \omega.
\end{eqnarray*}
In other words, $\partial \bar \partial u=0$ weakly in $\Omega$ which implies $u \in C^{\infty}(\Omega)$.  Thus, we have shown $u$ is a smooth map and $\partial \bar \partial u=0$ in $\mathcal R(u)$.  We can now apply Lemma~\ref{phequiv} to conclude that $u$ is a pluriharmonic map in the sense of Definition~\ref{def:pluriharmonic}.


\end{document}